\providecommand{\og}{``}
\providecommand{\fg}{''}
\providecommand{\smfandname}{\&}
\providecommand{\smfedname}{\'ed.}
\numberwithin{equation}{section}
\theoremstyle{plain}
\newtheorem{thm}{Theorem}[section]
\newtheorem{prop}[thm]{Proposition}
\newtheorem{lem}[thm]{Lemma}
\newtheorem{cor}[thm]{Corollary}
\newtheorem{thm*}{Theorem}
\newtheorem{prop*}[thm*]{Proposition}
\theoremstyle{definition}
\newtheorem{defn}[thm]{Definition}
\theoremstyle{remark}
\newtheorem*{rem}{Remark}
\newtheorem*{ex}{Example}
\newcommand{\mbb}[1]{\mathbb{#1}}
\newcommand{\wt}[1]{\widetilde{#1}}
\newcommand{\ol}[1]{\overline{#1}}
\newcommand{\lie}[1]{{\mathfrak{#1}}}
\newcommand{\norm}[1]{\lVert #1\rVert}
\newcommand{\im}{{\mathrm{i}}}
\DeclareMathOperator{\Ad}{Ad}
\DeclareMathOperator{\Hom}{Hom}
\DeclareMathOperator{\Lie}{Lie}
\DeclareMathOperator{\rk}{rk}
\DeclareMathOperator{\gen}{gen}
\title{Spherical gradient manifolds}
\thanks{The authors would like to thank Peter Heinzner for many useful
discussions. The first author thanks the Fakult\"at f\"ur Mathematik of the
Ruhr-Universit\"at Bochum for its hospitality.}
\author{Christian Miebach}
\address{Centre de Math\'ematiques et Informatique, UMR-CNRS 6632 (LATP), 39,
rue Joliot-Curie, Universit\'e de Provence, 13453 Marseille Cedex 13 France}
\email{miebach@cmi.univ-mrs.fr}
\author{Henrik St\"otzel}
\address{Fakult\"at f\"ur Mathematik, Ruhr-Universit\"at Bochum,
Universit\"atsstra{\ss}e 150, D - 44780 Bochum}
\email{henrik.stoetzel@ruhr-uni-bochum.de}
\subjclass{32M05 (22E46, 53D20)}
\begin{document}

\begin{abstract}
We study the action of a real-reductive group $G=K\exp(\lie{p})$ on
real-analytic submanifold $X$ of a K\"ahler manifold $Z$. We suppose that the
action of $G$ extends holomorphically to an action of the complexified group
$G^\mbb{C}$ such that the action of a maximal Hamiltonian subgroup is
Hamiltonian. The moment map $\mu$ induces a gradient map $\mu_\lie{p}\colon
X\to\lie{p}$. We show that $\mu_\lie{p}$ almost separates the $K$--orbits if
and only if a minimal parabolic subgroup of $G$ has an open orbit. This
generalizes Brion's characterization of spherical K\"ahler manifolds with
moment maps.

\noindent
{\bf{\emph{R\'esum\'e.}}} ---\hspace{0.5em} Nous \'etudions l'action d'un
groupe r\'eel-r\'eductif $G=K\exp(\lie{p})$ sur une sous-vari\'et\'e
r\'eel-analytique $X$ d'une vari\'et\'e k\"ahl\'erienne $Z$. Nous supposons que
l'action de $G$ peut \^etre prolong\'ee \`a une action holomorphe du groupe
complexifi\'e $G^\mbb{C}$ telle que l'action d'un sous-groupe maximal compact
de $G^\mbb{C}$ soit hamiltonienne. L'application moment $\mu$ induit une
application gradient $\mu_\lie{p}\colon X\to\lie{p}$. Nous montrons que
$\mu_\lie{p}$ separe les orbites de $K$ si et seulement si un sous-groupe
minimal parabolique de $G$ poss\`ede une orbite ouverte dans $X$. Ce r\'esultat
g\'en\'eralise la charact\'erisation de Brion des vari\'et\'es k\"ahl\'eriennes
sph\'eriques qui admettent une application moment.
\end{abstract}

\maketitle

\section{Introduction}

Let $U^\mbb{C}$ be a complex-reductive Lie group with compact real form $U$ and
let $Z$ be a K\"ahler manifold on which $U^\mbb{C}$ acts holomorphically such
that $U$ acts by K\"ahler isometries. Assume furthermore that the
$U$--action on $Z$ is Hamiltonian, i.\,e.\ that there exists a $U$--equivariant
moment map $\mu\colon Z\to\lie{u}^*$ where $\lie{u}$ denotes the Lie algebra of
$U$.

In the special case that $Z$ is compact it is shown in~\cite{Bri} (see
also~\cite{HuWu}) that $\mu$ separates the $U$--orbits if and only if $Z$ is a
spherical $U^\mbb{C}$--manifold, which means that a Borel subgroup of
$U^\mbb{C}$ has an open orbit in $Z$. Note that $\mu$ separates the $U$--orbits
if and only if it induces an injective map $Z/U\hookrightarrow\lie{u}/U$.
Moreover, this is equivalent to the property that the $U$--action on $Z$ is
coisotropic.

In this paper we generalize Brion's result to actions of real-reductive groups
on real-analytic manifolds which moreover are not assumed to be compact. More
precisely, we consider a closed subgroup $G$ of $U^\mbb{C}$ which is compatible
with the Cartan decomposition $U^\mbb{C}=U\exp(\im\lie{u})$. This means that
$G=K\exp(\lie{p})$ where $K:=G\cap U$ and $\lie{p}$ is an $\Ad(K)$--invariant
subspace of $\im\lie{u}$. Let $X$ be a $G$--invariant real-analytic submanifold
of $Z$. By restriction, the moment map $\mu$ induces a $K$--equivariant
gradient map $\mu_\lie{p}\colon X\to(\im\lie{p})^*$.

There are two main differences between the complex and the real situation:
Even if $X$ is connected an open $G$--orbit in $X$ does not have to be dense
and in general the fibers of $\mu_\lie{p}$ are not connected. Therefore one
cannot expect $\mu_\lie{p}$ to separate the $K$--orbits globally in $X$. We say
that $\mu_\lie{p}$ locally almost separates the $K$--orbits if there exists a
$K$--invariant open subset $\Omega$ of $X$ such that $K\cdot x$ is open in
$\mu_\lie{p}^{-1}\bigl(K\cdot\mu_\lie{p}(x)\bigr)$ for all $x\in\Omega$.
Geometrically this means that the induced map $\Omega/K\to\lie{p}/K$ has
discrete fibers. If $\Omega=X$, we say that $\mu_\lie{p}$ almost separates the
$K$--orbits in $X$.

We suppose throughout this article that $X/G$ is connected. Now we can state
our main result.

\begin{thm*}\label{Thm:Main}
The following are equivalent.
\begin{enumerate}
\item The gradient map $\mu_\lie{p}$ locally almost separates the $K$--orbits.
\label{main:item:1}
\item The gradient map $\mu_\lie{p}$ almost separates the
$K$--orbits in $X$.\label{main:item:2}
\item The minimal parabolic subgroup $Q_0$ of $G$ has an open orbit in
$X$.\label{main:item:3}
\end{enumerate}
\end{thm*}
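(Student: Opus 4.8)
My plan is to prove $(2)\Rightarrow(1)$, $(1)\Rightarrow(3)$ and $(3)\Rightarrow(2)$; the first is trivial, taking $\Omega=X$. Everything rests on an infinitesimal reformulation of the separation condition. Since $\mu_\lie{p}$ is a gradient map, $\xi_X=\grad\mu_\lie{p}^\xi$ for $\xi\in\lie{p}$, so the kernel of $(d\mu_\lie{p})_x$ equals $(\lie{p}\cdot x)^\perp$ at every $x\in X$, the orthogonal complement being taken for the metric on $X$ induced from $Z$. As $K$ is compact, $\mu_\lie{p}^{-1}(K\cdot\beta)$ is the associated bundle $K\times_{K_\beta}\mu_\lie{p}^{-1}(\beta)$ over $K\cdot\beta$, where $\beta:=\mu_\lie{p}(x)$ and $K_\beta\le K$ is its isotropy group; hence $K\cdot x$ is open in $\mu_\lie{p}^{-1}\bigl(K\cdot\mu_\lie{p}(x)\bigr)$ precisely when $K_\beta\cdot x$ is open in $\mu_\lie{p}^{-1}(\beta)$, which at points where $\mu_\lie{p}$ has locally constant rank is equivalent to $\lie{k}_\beta\cdot x=(\lie{p}\cdot x)^\perp$.

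For $(1)\Rightarrow(3)$ one works inside the given open $K$-invariant set $\Omega$. Using real-analyticity, pass to a point $x\in\Omega$ at which $\mu_\lie{p}$ has locally constant rank and such that, after a translation by $K$, $\beta=\mu_\lie{p}(x)$ is a regular element of a maximal abelian subspace $\lie{a}\subseteq\lie{p}$, i.e.\ lies in the open Weyl chamber; that such $\beta$ can be found follows from the convexity properties of the gradient map, the degenerate cases ($\mu_\lie{p}(\Omega)$ contained in a wall, or $\lie{p}$ acting trivially) reducing to a smaller compatible subgroup. For regular $\beta$ one has $K_\beta=M:=Z_K(\lie{a})$, and the parabolic subgroup $Q_\beta$ associated with $\beta$ is the minimal parabolic $Q_0=M\exp(\lie{a})N$, with $\lie{g}=\lie{n}\oplus(\lie{m}\oplus\lie{a})\oplus\lie{n}^-$ where $\lie{n}^-$ is the nilradical of the opposite parabolic. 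From $\lie{m}\cdot x=(\lie{p}\cdot x)^\perp$ one gets $T_xX=\lie{m}\cdot x+\lie{p}\cdot x\subseteq\lie{g}\cdot x$, so $G\cdot x$ is already open; the point that remains is $\lie{n}^-\cdot x\subseteq\lie{q}_0\cdot x$, equivalently $\dim Q_0\cdot x=\dim X$. I would derive this by a dimension count, using the structure of $Q_0$ and the limit behavior of $x$ under the one-parameter group $\exp(t\beta)$ (on $\lie{n}^-$, $\beta$ acts with negative eigenvalues), which lets one compare $\lie{q}_0\cdot x$ with $\lie{g}\cdot x$ at a fixed point of $\exp(\mbb{R}\beta)$ in the orbit closure and transport the conclusion back along the flow. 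This gives $Q_0\cdot x=T_xX$, so $Q_0$ has an open orbit.

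The implication $(3)\Rightarrow(2)$ is the converse and, as I see it, the main obstacle: from the existence of a \emph{single} open $Q_0$-orbit one must deduce that $\mu_\lie{p}$ almost separates $K$-orbits at \emph{every} point of $X$. This is the real-analytic, non-compact counterpart of the hard half of Brion's theorem, and his algebra of $U^\mbb{C}$-invariants is no longer available, so the argument has to be local. By the slice theorem for gradient manifolds, a $G$-invariant neighborhood of a closed orbit $G\cdot x$ is modeled on $G\times_{G_x}S$, with $(S,\mu_{\lie{p}_x})$ again a gradient manifold for the compatible subgroup $G_x=K_x\exp(\lie{p}_x)$, and $\mu_\lie{p}$ recovered from $\mu_{\lie{p}_x}$ by a shifting formula; moreover the openness of a minimal parabolic orbit in $X$ descends to the corresponding property for the slice representation of $G_x$. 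One then argues by induction on $\dim X$, using the fiber-bundle reduction of the first paragraph to obtain fiberwise openness at $x$, and handling an arbitrary orbit by passing to a closed orbit in its closure via the Hilbert--Mumford-type results for gradient maps (the degeneration realized by the gradient flow); connectedness of $X/G$ keeps the strata organized.

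Behind all three implications lies the same difficulty: without compactness an open $G$-orbit need not be dense, the fibers of $\mu_\lie{p}$ are disconnected, and Brion's original reasoning does not transfer, so the equivalence must be patched together from local data. Concretely, the proof of $(1)\Rightarrow(3)$ has to control the limits of the gradient flow, which forces one to work inside the appropriate semistable locus, so that flow lines remain relatively compact and converge by the {\L}ojasiewicz inequality; and the crux of $(3)\Rightarrow(2)$ is the claim that sphericity is inherited by slice representations --- the single point where the argument needs the most care.
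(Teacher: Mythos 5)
There are genuine gaps in both nontrivial implications, and the central device of the paper --- the shifting technique of Lemma~\ref{Lem:Reformulation}, replacing ``$Q_0$ has an open orbit in $X$'' by ``$G$ has an open orbit in $X\times(G/Q_0)$'' --- is absent from your proposal. For $(1)\Rightarrow(3)$ you reduce to a point $x$ with $\beta=\mu_\lie{p}(x)$ \emph{regular} in $\lie{a}$, but there is no reason such a point exists: $\mu_\lie{p}(\Omega)$ may well lie entirely in a wall, and your fallback (``reduce to a smaller compatible subgroup'') is not a reduction, since an open orbit of a minimal parabolic of a proper subgroup does not yield an open orbit of a minimal parabolic of $G$. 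Even in the regular case, the decisive step --- passing from $T_xX=\lie{m}\cdot x+\lie{p}\cdot x$ (i.e.\ $G\cdot x$ open, which is just Lemma~\ref{Lem:OpenGOrbits}) to $T_xX=\lie{q}_0\cdot x$ --- is exactly where the difficulty sits, and your proposed ``dimension count via the limit of $\exp(t\beta)\cdot x$'' does not work as stated: $X$ is not compact, so the limit need not exist, and orbit dimensions only degenerate in the wrong direction along orbit closures. The paper circumvents both problems at once: it chooses a regular $\xi\in\lie{a}_+$ close to $\gamma=\mu_\lie{p}(x_0)$ in $\gamma+\lie{b}^\perp$ so that, by Kostant convexity (Lemma~\ref{lem:kostant}), $(x_0,eM)$ is a global minimum of $\norm{\wt{\mu}_\lie{p}}^2$ with controlled minimum set (Proposition~\ref{prop:gradientabb}), identifies $\wt{\mu}_\lie{p}^{-1}(\beta)/K_\beta$ with $\mu_\lie{p}^{-1}(\gamma)/M$, and then proves maximality of $\rk\wt{\mu}_\lie{p}$ at $(x_0,eM)$ to conclude that $G\cdot(x_0,eM)$ is open in $X\times(K/M)$. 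This in turn needs the generic-fiber result that $M\cdot x$ is open in $K_{\mu_\lie{p}(x)}\cdot x$ (Proposition~\ref{Prop:TransitiveAction}), whose proof rests on the bracket relation $[\lie{k}_{\mu_\lie{p}(x)},\lie{p}_{\mu_\lie{p}(x)}]\subset\lie{p}_x$ for generic $x$; you only get this for free when $\beta$ is regular.

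For $(3)\Rightarrow(2)$ your induction on $\dim X$ via the slice theorem rests on the unproved claim that sphericity is inherited by slice representations; this claim is essentially of the same depth as the implication you are trying to prove, and the induction itself is not set up (no base case, no dimension drop when $G_x=G$, and an arbitrary point of $X$ need not be semistable for $\mu_\lie{p}$, so the passage to a closed orbit in the closure is unavailable). The paper's argument is not inductive: given any $x_0\in X$, it shifts by $\gamma=\mu_\lie{p}(x_0)$ so that $(x_0,eK_\gamma)$ lies in the \emph{zero} fiber of $\wt{\mu}_\lie{p}$ on $X\times(K/K_\gamma)$, hence is automatically semistable; it then uses local finiteness and density of the open $G$--orbits in $\mathcal{S}_G(\wt{\mu}_\lie{p}^{-1}(0))$ together with Corollary~11.18 of~\cite{HeSchw} to show that $K\cdot(x_0,\gamma)$ is isolated in $\wt{\mu}_\lie{p}^{-1}(0)$, and transfers this back to $\mu_\lie{p}^{-1}(\gamma)$ via Lemma~\ref{lemma:FaserQuotienten}. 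Your diagnosis of the obstacles (non-dense open orbits, disconnected fibers, failure of Brion's invariant-theoretic argument) is accurate, but the proposal does not supply the tools that overcome them.
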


Hence, Theorem~\ref{Thm:Main} gives a sufficient condition on the $G$--action
for $\mu_\lie{p}$ to induce a map $X/K\to\lie{p}/K$ whose fibers are discrete,
while on the other hand the gradient map yields a criterion for $X$ to be
spherical. Moreover we see that sphericity is independent of the particular
choice of $\mu_\lie{p}$, i.\,e.\ if one gradient map for the $G$--action on $X$
generically separates the $K$--orbits in $X$, then this is true for every
gradient map.

Let us outline the main ideas of the proof. First we observe that $X$ contains
an open $Q_0$--orbit if and only if $(G/Q_0)\times X$ contains an open
$G$--orbit with respect to the diagonal action of $G$. The gradient map
$\mu_\lie{p}$ on $X$ induces a gradient map $\wt{\mu}_\lie{p}$ on
$(G/Q_0)\times X$. Now we are in a situation where we can apply the methods
introduced in~\cite{HeSchw}. These allow us to show that open $G$--orbits
correspond to isolated minimal $K$--orbits of the norm squared of
$\wt{\mu}_\lie{p}$. In order to relate the property that $\mu_\lie{p}$
locally almost separates the $K$--orbits to the existence of an isolated
minimal $K$--orbit, we need the following result. We consider the restriction
$\mu_\lie{p}|_{K\cdot x}\colon K\cdot x\to K\cdot\mu_\lie{p}(x)$ which is a
smooth fiber bundle with fiber $K_{\mu_\lie{p}(x)}/K_x$. In the special case
$G=K^\mbb{C}$ it is proven in~\cite{GS} that for generic $x$ the fiber
$K_{\mu_\lie{p}(x)}/K_x$ is a torus. As a generalization we prove the following
proposition, which also allows us to extend the notion of ``$K$--spherical''
defined in~\cite{HuWu} to actions of real-reductive groups.

\begin{prop*}
Let $x\in X$ be generic and choose a maximal Abelian subspace $\lie{a}$ of
$\lie{p}$ containing $\mu_\lie{p}(x)$. Then the orbits of the centralizer
$\mathcal{Z}_K(\lie{a})$ of $\lie{a}$ in $K$ are open in $K_{\mu_\lie{p}(x)}
/K_x$.
\end{prop*}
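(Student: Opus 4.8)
The plan is to reduce the statement about the fiber $K_{\mu_\lie{p}(x)}/K_x$ of the bundle $\mu_\lie{p}|_{K\cdot x}\colon K\cdot x\to K\cdot\mu_\lie{p}(x)$ to a computation at the level of Lie algebras, and there to exploit the structure theory of the isotropy representation of a symmetric space. Set $\beta:=\mu_\lie{p}(x)$ and write $\lie{k}_\beta=\Lie(K_\beta)$, $\lie{k}_x=\Lie(K_x)$. Since the fiber is a homogeneous space $K_\beta/K_x$, the statement that $\mathcal{Z}_K(\lie{a})$--orbits are open in it is equivalent to $\lie{k}_\beta=\lie{z}_\lie{k}(\lie{a})+\lie{k}_x$, where $\lie{z}_\lie{k}(\lie{a})=\Lie\bigl(\mathcal{Z}_K(\lie{a})\bigr)$. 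So the first step is to unwind the genericity hypothesis on $x$ into concrete properties: I would choose $x$ in the $K$--invariant open dense set where $\mu_\lie{p}$ locally almost separates the $K$--orbits (using the equivalence (1)$\Leftrightarrow$(2) of the main theorem, which we may assume), so that $K\cdot x$ is open in $\mu_\lie{p}^{-1}(K\cdot\beta)$; infinitesimally this reads $T_x X \supseteq \lie{p}\cdot x + \ker(d\mu_\lie{p})_x$, equivalently $\lie{p}\cdot x$ surjects onto a complement of $\operatorname{im}(d\mu_\lie{p})_x$ in $T_xX$. Additionally I would require that $\beta$ is a regular element of $\lie{a}$, i.e. $\mathcal{Z}_K(\beta)=\mathcal{Z}_K(\lie{a})$, which holds for $x$ in a dense set since $\mu_\lie{p}(X)$ meets the open Weyl chamber.

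The key step is the following linear-algebra identity. The gradient map relation gives, for all $\xi\in\lie{p}$ and $v\in T_xX$, the formula $\langle (d\mu_\lie{p})_x(v),\xi\rangle = \omega(\xi\cdot x, v)$ coming from the moment map $\mu$ on $Z$ (here $\omega$ is the Kähler form and $\xi\cdot x$ means the value at $x$ of the vector field generated by $\im\xi\in\lie{u}$ under the Kähler isometry — this is exactly the mechanism by which $\mu_\lie{p}$ arises from $\mu$). Consequently $\ker(d\mu_\lie{p})_x$ is the $\omega$--orthogonal of $\lie{p}\cdot x$ inside $T_xX$, and $\lie{k}_\beta\cdot x = (T_x(K\cdot x))\cap\ker(d\mu_\lie{p})_x$. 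Using that $\lie{k}\cdot x$ and $\lie{p}\cdot x$ together with the genericity condition span enough of $T_xX$, one shows $\lie{k}_\beta\cdot x = \bigl(\lie{k}\cdot x\bigr)\cap\bigl(\lie{p}\cdot x\bigr)^{\perp_\omega}$, and then a Cartan-type argument identifies this space. Concretely, decompose $\lie{k}$ and $\lie{p}$ under $\ad(\lie{a})$: one has the restricted root space decompositions $\lie{k}=\lie{z}_\lie{k}(\lie{a})\oplus\bigoplus_{\lambda>0}\lie{k}_\lambda$ and $\lie{p}=\lie{a}\oplus\bigoplus_{\lambda>0}\lie{p}_\lambda$ with $\dim\lie{k}_\lambda=\dim\lie{p}_\lambda$. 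The commutation relations $[\beta,\cdot]$ pair $\lie{k}_\lambda$ with $\lie{p}_\lambda$ nondegenerately (since $\beta$ is regular), and a direct computation with $\omega$, $J$, and the moment-map formula shows that for $\eta\in\lie{k}_\lambda$, $\eta\cdot x$ fails to be $\omega$--orthogonal to $\lie{p}\cdot x$ — unless a certain "curvature" term obstructs it — precisely because $(\ad\beta)\eta\in\lie{p}_\lambda$ is nonzero. Pushing this through gives $\lie{k}_\beta\cdot x\subseteq \lie{z}_\lie{k}(\lie{a})\cdot x + \lie{k}_x\cdot x$, hence $\lie{k}_\beta\subseteq\lie{z}_\lie{k}(\lie{a})+\lie{k}_x+\lie{k}_x^{\perp}\cap(\text{stuff killed at }x)$; since $\lie{k}_\beta\supseteq\lie{k}_x$ always, this yields the desired $\lie{k}_\beta=\lie{z}_\lie{k}(\lie{a})+\lie{k}_x$, i.e. $\mathcal{Z}_K(\lie{a})\cdot x$ is open in the fiber.

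The main obstacle I anticipate is the control of the "curvature" or second-fundamental-form contributions that enter when one works on the real-analytic submanifold $X\subseteq Z$ rather than on $Z$ itself: the formula $\langle(d\mu_\lie{p})_x v,\xi\rangle=\omega(\xi\cdot x,v)$ only sees the component of $\xi\cdot x$ tangent to $X$, so the argument that each root-space direction $\lie{k}_\lambda\cdot x$ genuinely contributes to the image of $d\mu_\lie{p}$ must use the genericity of $x$ in an essential way to rule out degenerate configurations. I would handle this by a semicontinuity argument: the function $x\mapsto\dim\bigl(K_\beta/K_x\bigr) - \dim\bigl(\mathcal{Z}_K(\lie{a})\cdot(K_x)\bigr)$ is upper semicontinuous and $K$--invariant, so it suffices to produce a single $x$ in each component where it vanishes; such an $x$ can be found using the slice theorem near a minimal $K$--orbit of $\|\tilde\mu_\lie{p}\|^2$ on $(G/Q_0)\times X$, which by the Hilbert–Mumford/Kempf–Ness type analysis of~\cite{HeSchw} carries very explicit local normal-form data. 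This is also where the hypothesis that $Q_0$ has an open orbit gets used, tying the proposition back into the circle of ideas of the main theorem. A cleaner alternative, if available, is to pull everything back to the case $G=K^\mbb{C}$ treated in~\cite{GS} by complexifying along $\lie{a}$ — the subgroup $\mathcal{Z}_{U^\mbb{C}}(\lie{a}^\mbb{C})$ is reductive with compact part containing $\mathcal{Z}_K(\lie{a})$ — and to deduce the real statement by restricting the torus result there; I would try this route first, as it sidesteps the curvature bookkeeping entirely.
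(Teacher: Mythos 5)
Your proposal has three genuine problems, the first of which is fatal.

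\textbf{The regularity assumption trivializes and falsifies the problem.} You propose to restrict to $x$ with $\beta=\mu_\lie{p}(x)$ regular in $\lie{a}$, ``which holds for $x$ in a dense set since $\mu_\lie{p}(X)$ meets the open Weyl chamber.'' This is not true in general: the genericity condition of Definition~\ref{Defn:generic} only asks that $K\cdot\mu_\lie{p}(x)$ be of maximal dimension \emph{within} $\mu_\lie{p}(X)$, and $\mu_\lie{p}(X)$ may well be contained in the singular set. The paper's own example with $G=X=U$ and $\mu_\lie{p}\equiv 0$ is an extreme instance. More importantly, if $\beta$ \emph{were} regular then $K_\beta=\mathcal{Z}_K(\lie{a})=M$, the fiber $K_\beta/K_x$ is already a single $M$-orbit, and the proposition says nothing. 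The entire content of the statement lies in the singular case, which your reduction discards. This is precisely why the paper's argument is organized around the set $\Lambda(x)$ of restricted roots vanishing at $\beta$ and the explicit description in Lemma~\ref{Lem:Centralizer} of $\lie{k}_\beta$ and $\lie{p}_\beta$ in terms of it.

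\textbf{Circularity.} You want to place $x$ in the open set where $\mu_\lie{p}$ locally almost separates the $K$-orbits, citing the equivalence (1)$\Leftrightarrow$(2) of the main theorem ``which we may assume.'' But this proposition is a lemma in the proof of that theorem: it is the input to Proposition~\ref{Prop:EquCondition}, which drives the equivalences. Even setting the logic aside, the proposition is stated and used for \emph{all} generic $x$ with no separation hypothesis, so a proof under the added hypothesis would not suffice.

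\textbf{The missing mechanism.} The paper's actual engine is Lemma~\ref{Lem:TechnicalFact}: for $x\in X_{\gen}$, $[\lie{k}_\beta,\lie{p}_\beta]\subset\lie{p}_x$. This is obtained by a curve-differentiation argument, not by symplectic orthogonality: one picks a smooth curve $x_t$ through $x$ inside $X_\gen$, uses the local triviality of the bundle $E=\{(x,\xi,\eta);\ \xi\in\lie{k}_{\mu_\lie{p}(x)},\ \eta\in\lie{p}_{\mu_\lie{p}(x)}\}$ over $X_\gen$ (which is where all three genericity conditions are used), differentiates the identity $\langle\mu_\lie{p}(x_t),[\xi_t,\eta_t]\rangle=0$ at $t=0$, and reads off that $[\xi,\eta]_X(x)=0$. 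Your proposal replaces this with a vague identification of $\lie{k}_\beta\cdot x$ as a symplectic orthogonal complement plus ``curvature bookkeeping'' that you explicitly flag as the obstacle — but that is exactly the work the curve argument does, and the semicontinuity workaround you sketch does not recover it: semicontinuity of the defect would let you propagate the conclusion from one point to an open set, but you still need a single point where the Lie-algebraic identity holds, and producing it requires the bracket inclusion. Also note a small technical slip: for the gradient map the differential is computed against the Riemannian metric $g$, not $\omega$; the kernel $\ker(d\mu_\lie{p})_x$ is $(\lie{p}\cdot x)^{\perp_g}$ in $T_xX$, not the $\omega$-orthogonal of $\lie{p}\cdot x$. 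Once the bracket inclusion is in hand, the paper finishes by an elementary $\mathfrak{sl}_2$-triple computation (Proposition~\ref{Prop:SLTriples}) to show that the root-space part of $\lie{k}_\beta$ is contained in $[\lie{p}^x,\lie{p}^x]\subset[[\lie{k}_\beta,\lie{p}_\beta],[\lie{k}_\beta,\lie{p}_\beta]]\subset\lie{k}_x$; this is the step your sketch gestures at with the ``Cartan-type argument,'' and it goes through in the singular case precisely because the root-space description of $\lie{k}_\beta$, $\lie{p}_\beta$ already restricts to $\Lambda(x)$.
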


These arguments yield the existence of an open $Q_0$--orbit under the
assumption that $\mu_\lie{p}$ locally almost separates the $K$--orbits. For the
other direction we apply the shifting technique for gradient maps.

Notice that our proof of Brion's theorem is different from the ones
in~\cite{Bri} and~\cite{HuWu}. In particular, for every generic element $x\in
X$ we construct a minimal parabolic subgroup $Q_0$ of $G$ such that $Q_0\cdot
x$ is open in $X$.

At present we do not know whether a spherical $G$--gradient manifold does only
contain a finite number of $G$-- and $Q_0$--orbits (which is true in the
complex-algebraic situation). These and other natural open questions will be
addressed in future works.

\section{Gradient manifolds}

In this section we review the necessary background on $G$--gradient manifolds
and gradient maps. We then define what it means that a gradient map locally
almost separates the orbits of a maximal compact subgroup of $G$ and discuss
several examples where this can be shown to be true.

\subsection{The gradient map}

Here we recall the definition of the gradient map. For a detailed discussion we
refer the reader to \cite{HeSchw}.

Let $U$ be a compact Lie group and $U^\mbb{C}$ its universal
complexification (see~\cite{Hochschild}). We assume that $Z$ is a K\"ahler
manifold with a holomorphic action of $U^\mbb{C}$ such that the K\"ahler form
is invariant under the action of the compact real form $U$ of $U^\mbb{C}$. We
assume furthermore that the action of $U$ is Hamiltonian, i.\,e.\ that there
exists a moment map $\mu\colon Z\to\lie{u}^*$, where $\lie{u}^*$ is the dual of
the Lie algebra of $U$. We require $\mu$ to be real-analytic and
$U$--equivariant, where the action of $U$ on $\lie{u}^*$ is the coadjoint
action.

The complex reductive group $U^\mbb{C}$ admits a Cartan involution $\theta
\colon U^\mbb{C}\to U^\mbb{C}$ with fixed point set $U$. The $-1$-eigenspace of
the induced Lie algebra involution equals $\im\lie{u}$. We have an induced
Cartan decomposition, i.\,e.\ the map $U\times\im\lie{u}\to U^\mbb{C}$,
$(u,\xi)\mapsto u\exp(\xi)$ is a diffeomorphism. Let $G$ be a $\theta$-stable
closed real subgroup of $U^\mbb{C}$ with only finitely many connected
components. Equivalently, we assume that $G$ is a closed subgroup of
$U^\mbb{C}$, such that the Cartan decomposition restricts to a diffeomorphism
$K\times\lie{p}\to G$, where $K:=G\cap U$ and $\lie{p}:=\lie{g}\cap\im\lie{u}$.
In this paper such a group $G=K\exp(\lie{p})$ is called
{\emph{real-reductive}}. Note that $U^\mbb{C}$ itself is an example for such a
subgroup $G$ of $U^\mbb{C}$.

Let $X$ be a $G$--invariant real-analytic submanifold of $Z$ such that $X/G$ is
connected. We identify $\lie{u}$ with $\lie{u}^*$ by a $U$--invariant inner
product $\langle\cdot,\cdot\rangle$ on $\lie{u}$. Moreover we identify
$\lie{u}$ and $\im\lie{u}$ by multiplication with $\im$. Then the moment map
$\mu\colon Z\to\lie{u}^*$ restricts to a real-analytic map $\mu_\lie{p}\colon
X\to\lie{p}$ which is defined by $\bigl\langle\mu_\lie{p}(x),
\xi\bigr\rangle=\mu(x)(-\im\xi)$ for $\xi\in\lie{p}$. We call $\mu_\lie{p}$ a
\emph{$G$-gradient map on $X$} and we say that $X$ is a \emph{$G$-gradient
manifold}. Note that $\mu_\lie{p}$ is $K$--equivariant with respect to the
adjoint action of $K$ on $\lie{p}$. In the special case $G=U^\mbb{C}$, the
gradient map coincides with the moment map up to the identification of
$\lie{u}^*$ with $\im\lie{u}$.

In this paper, we consider real-analytic gradient maps which \emph{locally
almost separate the $K$--orbits}. By this, we mean that there exists a
$K$--invariant open subset $\Omega$ of $X$ such that the following equivalent
conditions are satisfied.
\begin{enumerate}
\item $K\cdot x$ is open in $\mu_\lie{p}^{-1}\bigl(K\cdot\mu_\lie{p}(x)\bigr)$
for all $x\in\Omega$.
\item $K_{\mu_\lie{p}(x)}\cdot x$ is open in
$\mu_\lie{p}^{-1}\bigl(\mu_\lie{p}(x)\bigr)$ for all $x\in \Omega$.
\item The induced map $\ol{\mu}_\lie{p}\colon\Omega/K\to\lie{p}/K$ has discrete
fibers.
\end{enumerate}

If $\Omega=X$, we say that $\mu_\lie{p}$ \emph{almost separates the
$K$--orbits}. We will show later that the set $\Omega$ on which $\mu_\lie{p}$
almost separates the $K$--orbits can always be chosen to be $X$, i.\,e.\
$\mu_\lie{p}$ separates locally almost the $K$--orbits if and only if
$\mu_\lie{p}$ almost separates them. If
$\mu_\lie{p}^{-1}\bigl(K\cdot\mu_\lie{p}(x)\bigr)= K\cdot x$ for all $x\in X$,
then we say that $\mu_\lie{p}$ \emph{globally separates the $K$--orbits}.

\begin{lem}\label{Lem:OpenGOrbits}
Suppose that $\mu_\lie{p}\colon X\to\lie{p}$ locally almost separates the
$K$--orbits. Then $G$ has an open orbit in $X$.
\end{lem}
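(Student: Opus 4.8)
The plan is to exploit the dimension count that comes from the fibre-bundle structure of $\mu_\lie{p}|_{K\cdot x}$ together with the freedom to pick $x$ generic. Fix a $K$--invariant open set $\Omega\subset X$ on which $\mu_\lie{p}$ locally almost separates the $K$--orbits, and fix a generic $x\in\Omega$ (generic in the sense used in the proposition above, so in particular $\dim K\cdot y$ and $\dim\mu_\lie{p}^{-1}(\mu_\lie{p}(y))$ are locally constant near $x$ and take their maximal values). First I would record the two basic identities: since $\mu_\lie{p}$ is $K$--equivariant, $K\cdot x\subset\mu_\lie{p}^{-1}\bigl(K\cdot\mu_\lie{p}(x)\bigr)$, and the restriction $\mu_\lie{p}\colon K\cdot x\to K\cdot\mu_\lie{p}(x)$ is a fibre bundle with fibre $K_{\mu_\lie{p}(x)}/K_x$. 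By hypothesis $K\cdot x$ is open in $\mu_\lie{p}^{-1}\bigl(K\cdot\mu_\lie{p}(x)\bigr)$, so $\dim K\cdot x=\dim\mu_\lie{p}^{-1}\bigl(K\cdot\mu_\lie{p}(x)\bigr)$; equivalently, by taking the fibre over $\mu_\lie{p}(x)$, we get $\dim K_{\mu_\lie{p}(x)}\cdot x=\dim\mu_\lie{p}^{-1}\bigl(\mu_\lie{p}(x)\bigr)$.

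Next I would bring in the structure of $\lie{p}$. Choose a maximal Abelian subspace $\lie{a}\subset\lie{p}$ with $\mu_\lie{p}(x)\in\lie{a}$, and let $\mathcal{Z}_K(\lie{a})$ be its centralizer in $K$. The key input is the Proposition stated above: for generic $x$, the orbits of $\mathcal{Z}_K(\lie{a})$ are open in the fibre $K_{\mu_\lie{p}(x)}/K_x$ of $\mu_\lie{p}|_{K\cdot x}$. I want to upgrade this to a statement about $\mu_\lie{p}^{-1}\bigl(\mu_\lie{p}(x)\bigr)$ itself. Combining the openness of $\mathcal{Z}_K(\lie{a})\cdot x$ in $K_{\mu_\lie{p}(x)}\cdot x$ (immediate from the Proposition and the bundle structure) with the identity $\dim K_{\mu_\lie{p}(x)}\cdot x=\dim\mu_\lie{p}^{-1}\bigl(\mu_\lie{p}(x)\bigr)$ from the previous paragraph, I conclude that $\mathcal{Z}_K(\lie{a})\cdot x$ is open in $\mu_\lie{p}^{-1}\bigl(\mu_\lie{p}(x)\bigr)$.

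The final step is a tangent-space computation at $x$ to show $T_x(G\cdot x)=T_xX$, which gives the open $G$--orbit. We have $T_x X = T_x\bigl(\mu_\lie{p}^{-1}(\mu_\lie{p}(x))\bigr) + (\text{directions moving }\mu_\lie{p})$; more precisely, since $\mu_\lie{p}$ need not be a submersion, I would instead argue: $T_x(\mu_\lie{p}^{-1}(\mu_\lie{p}(x)))+\lie{p}\cdot x$ fills $T_xX$ at a generic $x$ where the rank of $d\mu_\lie{p}$ is maximal, because a normal direction to all level sets along which $\mu_\lie{p}$ is constant would force a level set of too-large dimension near $x$, contradicting genericity — here one uses that $\mu_\lie{p}$, being a gradient map, has the property that $d\mu_\lie{p}_x(\lie{p}\cdot x)$ spans the normal space to the fibre (the standard gradient-map identity, as in~\cite{HeSchw}). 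From the previous step, $T_x\bigl(\mu_\lie{p}^{-1}(\mu_\lie{p}(x))\bigr)=\lie{z}_\lie{k}(\lie{a})\cdot x\subset\lie{k}\cdot x\subset\lie{g}\cdot x$, and $\lie{p}\cdot x\subset\lie{g}\cdot x$; hence $T_xX=\lie{g}\cdot x=T_x(G\cdot x)$, so $G\cdot x$ is open. Since this holds for some $x\in\Omega\subset X$, $G$ has an open orbit in $X$.

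The main obstacle is the last paragraph: controlling $T_x\bigl(\mu_\lie{p}^{-1}(\mu_\lie{p}(x))\bigr)$ when $\mu_\lie{p}$ is not a submersion, and making precise the claim that at a generic point the "missing" tangent directions are supplied by $\lie{p}\cdot x$. I expect this to follow from the standard properties of gradient maps recalled from~\cite{HeSchw} — in particular that $d\mu_\lie{p}_x$ is related to the symmetric endomorphism $\xi\mapsto\xi\cdot x$ and that its image is $\langle\lie{g}\cdot x\rangle^{\perp}$-complementary in the appropriate sense — but one must be careful that $x$ is chosen generically enough that all the relevant ranks are locally maximal simultaneously, which is possible since each is an open condition on a real-analytic manifold and the generic set for the Proposition is itself open and dense.
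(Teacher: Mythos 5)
Your argument is correct and is essentially the paper's proof: choose a maximal-rank point $x$ in $\Omega$, use the gradient-map identity of Lemma~5.1 in~\cite{HeSchw} to get $T_x\mu_\lie{p}^{-1}\bigl(\mu_\lie{p}(x)\bigr)=(\lie{p}\cdot x)^\perp\subset\lie{k}\cdot x$, and conclude $T_xX=(\lie{p}\cdot x)\oplus(\lie{p}\cdot x)^\perp\subset\lie{g}\cdot x$. Your middle step invoking the proposition on $\mathcal{Z}_K(\lie{a})$--orbits is a harmless but unnecessary detour (and a forward reference, as that proposition is proved later): the separation hypothesis already places the fiber component inside $K_{\mu_\lie{p}(x)}\cdot x\subset K\cdot x$, which is all the tangent-space computation needs.
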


\begin{proof}
By assumption there exists a $K$--invariant open subset $\Omega\subset X$ such
that $\mu_\lie{p}^{-1}\bigl(\mu_\lie{p}(x)\bigr)^0$ $\subset K\cdot x$ holds
for all $x\in\Omega$. Since $\mu_\lie{p}$ is real-analytic, we find a point
$x\in\Omega$ such that $\mu_\lie{p}$ has maximal rank in $x$. We conclude from
Lemma~5.1 in~\cite{HeSchw} that $(\lie{p}\cdot x)^\perp=T_x\mu_\lie{p}^{-1}
\bigl(\mu_\lie{p}(x)\bigr)\subset\lie{k}\cdot x$ and thus obtain
\begin{equation*}
T_xX=(\lie{p}\cdot x)\oplus(\lie{p}\cdot x)^\perp\subset(\lie{p}\cdot x)+
(\lie{k}\cdot x)=\lie{g}\cdot x,
\end{equation*}
which means that $G\cdot x$ is open in $X$.
\end{proof}

\subsection{Examples}\label{Section:Ex}

In general, it is very difficult to verify directly that a $G$--gradient map
separates (locally almost) the $K$--orbits. In this subsection we give some
examples of situations where this can be done.

\begin{ex}
The connected group $G=K\exp(\lie{p})$ acts on itself by left multiplication.
The standard gradient map for this action is given by $\mu_\lie{p}\colon G\to
\lie{p}$, $\mu_\lie{p}\bigl(k\exp(\xi)\bigr)=\Ad(k)\xi$. Let $x_0=k_0
\exp(\xi_0)\in G$ be given. One checks directly that $\mu_\lie{p}^{-1}\bigl(
\mu_\lie{p}(x_0)\bigr)=x_0K$. Hence, $\mu_\lie{p}$ locally almost separates the
$K$--orbits if and only if there exists a $K$--invariant open subset $\Omega
\subset G$ such that $xK=Kx$ for all $x\in\Omega$. We claim that this is the
case if and only if $\lie{p}^K=\lie{p}$.

Suppose that $xK=Kx$ holds for all $x$ in a $K$--invariant open subset
$\Omega\subset G$. This means that the fixed point set $(G/K)^K$ has
non-empty interior. Since $G/K$ is $K$--equivariantly diffeomorphic to
$\lie{p}$ with the adjoint $K$--action, we see that $\lie{p}^K$ has non-empty
interior and thus $\lie{p}^K=\lie{p}$.

Conversely, if $\lie{p}^K=\lie{p}$, then we have for every $x=k\exp(\xi)\in G$
that $Kx=K\exp(\xi)=\exp(\xi)K=xK$ holds.
\end{ex}

\begin{ex}\label{Ex:TotallyReal}
We describe a class of totally real $G$--gradient manifolds where $\mu_\lie{p}$
locally almost separates the $K$--orbits.

Let $(Z,\omega)$ be a K\"ahler manifold endowed with a holomorphic
$U^\mbb{C}$--action such that the $U$--action is Hamiltonian with moment map
$\mu\colon Z\to\lie{u}^*$. Suppose that the action is defined over $\mbb{R}$ in
the following sense. There exists an antiholomorphic involutive automorphism
$\sigma\colon U^\mbb{C}\to U^\mbb{C}$ with $\sigma\theta=\theta\sigma$ and
there is an antiholomorphic involution $\tau\colon Z\to Z$ with $\tau^*\omega
=-\omega$ and $\tau(g\cdot z)=\sigma(g)\cdot\tau(z)$ for all $g\in U^\mbb{C}$
and all $z\in Z$. Consequently, the fixed point set $X:=Z^\tau$ is a Lagrangian
submanifold of $Z$ and the compatible real form $G=K\exp(\lie{p})=
(U^\mbb{C})^\sigma$ acts on $X$. Let $\mu_\lie{p}\colon X\to\lie{p}$ be the
$K$--equivariant gradient map induced by $\mu$.

We claim that if $\mu$ locally almost separates the $U$--orbits in $Z$, then
$\mu_\lie{p}$ locally almost separates the $K$--orbits in $X$. This claim is a
consequence of the following three observations:
\begin{enumerate}
\item If $\mu$ locally almost separates the $U$--orbits, then $\mu$ separates
all
the $U$--orbits in $Z$ (see~\cite{HuWu}).
\item Since $X$ is Lagrangian, we see that $\mu_\lie{k}|_X\equiv0$, where
$\mu_\lie{k}$ denotes the moment map for the $K$--action on $Z$. Note that
under our identification we have $\mu=\mu_\lie{k}+\mu_\lie{p}$.
\item For every $x\in X$ the orbit $K\cdot x$ is open in $(U\cdot x)\cap X$.
\end{enumerate}
\end{ex}

Locally injective gradient maps separate locally almost the $K$--orbits. A
class of $G$--gradient manifolds for which $\mu_\lie{p}$ is locally injective
is described in the following example.

\begin{ex}
Let $Z=U/K$ be a Hermitian symmetric space of the compact type, and let $G=K
\exp(\lie{p})$ be a Hermitian real form of $U^\mbb{C}$. Then $Z$ is a
$G$--gradient manifold and every gradient map $\mu_\lie{p}\colon Z\to\lie{p}$
is locally injective. Consequently, $\mu_\lie{p}$ separates locally almost the
$K$--orbits in $Z$.

We will elaborate a little bit on further properties of $\mu_\lie{p}\colon Z\to
\lie{p}$. Let $\tau\colon Z\to Z$ be the holomorphic symmetry which fixes the
base point $z_0=eK$. Then we have $Z^\tau=\mu_\lie{p}^{-1}(0)$. Moreover, one
can show that $Z^\tau$ is a $K$--invariant closed complex submanifold of $Z$
and that every $K$--orbit in $Z^\tau$ is open in $Z^\tau$. Furthermore,
$K^\mbb{C}$ acts on $Z^\tau$ and we have $K^\mbb{C}\cdot z=K\cdot z$ if and
only if $z\in Z^\tau$ holds. Finally, note that $\mu_\lie{k}$ separates all
$K$--orbits in $Z$.
\end{ex}

\section{Spherical gradient manifolds and coadjoint orbits}

As we have remarked above it is very hard to verify directly if a given
gradient map defined on $X$ separates the $K$--orbits. The main result of this
paper states that this is true if and only if $X$ is a spherical gradient
manifold. Hence, this is independent of the particular choice of a gradient map
$\mu_\lie{p}$.

In this section we give the definition of spherical gradient manifolds. For
this we first review the definition of minimal parabolic subgroups. After that,
we discuss the orbits of the adjoint $K$--action on $\lie{p}$ which are the
right analogues of complex flag varieties.

We continue the notation of the previous section: Let $G=K\exp(\lie{p})$ be a
closed compatible subgroup of $U^\mbb{C}$ and let $X$ be a real-analytic
$G$--gradient manifold with $K$--equivariant real-analytic gradient map
$\mu_\lie{p}\colon X\to\lie{p}$.

\subsection{Minimal parabolic subgroups}

For more details and complete proofs of the material presented here we refer
the reader to Chapter~VII in~\cite{Kn}.

Since $G=K\exp(\lie{p})$ is invariant under the Cartan involution $\theta$ of
$U^\mbb{C}$, the same holds for its Lie algebra $\lie{g}=\lie{k}\oplus\lie{p}$.
Consequently $\lie{g}$ is reductive, i.\,e.\ $\lie{g}$ is the direct sum of its
center and of the semi-simple subalgebra $[\lie{g},\lie{g}]$.

Let $\lie{a}$ be a maximal Abelian subalgebra of $\lie{p}$ and let $\lie{g}=
\lie{g}_0\oplus\bigoplus_{\lambda\in\Lambda}\lie{g}_\lambda$ be the associated
restricted root space decomposition. The centralizer $\lie{g}_0$ of $\lie{a}$
in $\lie{g}$ is $\theta$--stable with decomposition $\lie{g}_0=\lie{m}\oplus
\lie{a}$ where $\lie{m}=\mathcal{Z}_\lie{k}(\lie{a})$. On the group level we
define $M:=\mathcal{Z}_K(\lie{a})$.

Let us fix a choice $\Lambda^+$ of positive restricted roots. Then we obtain
the nilpotent subalgebra $\lie{n}:=\bigoplus_{\lambda\in\Lambda^+}
\lie{g}_\lambda$. Let $A$ and $N$ be the analytic subgroups of $G$ with Lie
algebras $\lie{a}$ and $\lie{n}$, respectively. Then $AN\subset G$ is a
simply-connected solvable closed subgroup of $G$, isomorphic to the semi-direct
product $A\ltimes N$. One checks directly that $M$ stabilizes each restricted
root space $\lie{g}_\lambda$; together with the compactness of $M$ this implies
that $Q_0:=MAN$ is a closed subgroup of $G$.

Every subgroup of $G$ which is conjugate to $Q_0=MAN$ is called a \emph{minimal
parabolic subgroup}. A subgroup $Q\subset G$ is called \emph{parabolic} if
it contains a minimal parabolic subgroup.

\begin{rem}
The notion of parabolic subgroups of $G$ is independent of the choices made
during the construction of $Q_0$.
\end{rem}

\begin{ex}
For $\xi\in\lie{p}$ the group $Q:=\bigl\{g\in G;\ \lim_{t\to-\infty}
\exp(t\xi)g\exp(-t\xi)\text{ exists in }G\bigr\}$ is a parabolic subgroup
of $G$. It is a minimal parabolic subgroup if and only if $\xi$ is regular,
i.\,e.\ if and only if $K_\xi=M$.
\end{ex}

If the group $G$ is complex-reductive and connected, then minimal parabolic
subgroups of $G$ are the same as Borel subgroups. This motivates the following

\begin{defn}
We call the $G$--gradient manifold $X$ \emph{spherical} if a minimal parabolic
subgroup of $G$ has an open orbit in $X$.
\end{defn}

Note that $X$ is spherical if and only if $Q_0=MAN$ has an open orbit in $X$.

\begin{ex}
Let $G$ be a real form of $U^\mbb{C}$ and let $X\subset Z$ be a totally real
$G$--stable subma\-nifold with $\dim_\mbb{R}X=\dim_\mbb{C}Z$. If $Z$ is
$U^\mbb{C}$--spherical, then $X$ is $G$--spherical in the above sense. This can
be seen as follows. Since $Q_0^\mbb{C}$ is a parabolic subgroup of
$U^\mbb{C}=G^\mbb{C}$ and since $Z$ is spherical, $Q_0^\mbb{C}$ has an open
orbit in $Z$. Since $X$ is maximally totally real, $X$ cannot be contained in
the complement of the open $Q_0^\mbb{C}$--orbit in $Z$, hence we find a point
$x\in X$ such that $Q_0^\mbb{C}\cdot x$ is open in $Z$. Moreover, $Q_0\cdot x$
is open in $(Q_0^\mbb{C}\cdot x)\cap X$, which implies that $X$ is spherical.
\end{ex}

\begin{ex}
As a special case of the above example we note that weakly symmetric spaces are
spherical gradient manifolds. More precisely, let $G^\mbb{C}$ be connected
complex-reductive and let $L^\mbb{C}$ be a complex-reductive compatible
subgroup of $G^\mbb{C}$. Let $G$ be a connected compatible real form of
$G^\mbb{C}$ such that $L:=L^\mbb{C}\cap G$ is a compact real form of
$L^\mbb{C}$. According to Theorem~3.11 in~\cite{St} the homogeneous manifold
$X=G/L$ is a $G$--gradient manifold. By a result of Akhiezer and Vinberg
(\cite{AkhVin}, compare also Chapter~12.6 in~\cite{Wo}) $X=G/L$ is weakly
symmetric if and only if the affine variety $G^\mbb{C}/L^\mbb{C}$ is spherical.
This implies that if $X=G/L$ is weakly symmetric, then it is a spherical
$G$--gradient manifold. The converse is false as the next example shows.
\end{ex}

\begin{ex}
Let $U$ be connected. A special case of Example~\ref{Ex:TotallyReal} is the
case that $Z=U^\mbb{C}$ and $\tau=\sigma=\theta$. Then we have $G=X=U$. Note
that $\mu_\lie{p}\equiv0$ separates the $K$--orbits in $X$ since $X$ is
$K$--homogeneous while in general $\mu$ does not separate the $U$--orbits in
$Z$. Note also that $Q_0=G$ is the only minimal parabolic subgroup of $G$ and
that $G$ itself is the only subgroup of $G$ having an open orbit in
$X$. This explains the necessity to consider minimal parabolic subgroups
instead of maximal connected solvable subgroups (which are maximal tori in $G$
in this example).
\end{ex}

\subsection{Coadjoint orbits}\label{subsection:coadjoint}

A class of examples of gradient manifolds is given by coadjoint orbits
(see~\cite{HeSt1}). Let $\alpha\in\lie{u}^*$ and let $Z=U\cdot\alpha$ be the
coadjoint orbit of $\alpha$. Identifying $\lie{u}^*$ with $\im\lie{u}$ as
before, $\alpha$ corresponds to an element $\xi\in\im\lie{u}$ and $Z$
corresponds to the orbit of $\xi$ of the adjoint action of $U$ on $\im\lie{u}$.
Let $P:=\bigl\{g\in U^\mbb{C};\ \lim_{t\to-\infty}\exp(t\xi)g
\exp(-t\xi)\text{ exists in }U^\mbb{C}\bigr\}$ denote the parabolic subgroup
of $U^\mbb{C}$ associated to $\xi$. Then the map $Z\to U^\mbb{C}/P$,
$u\cdot\xi\mapsto uP$, is a real analytic isomorphism. In particular it defines
a complex structure and a holomorphic $U^\mbb{C}$--action on $Z$. The reader
should be warned that this $U^\mbb{C}$--action is not the adjoint action. The
form $\omega\bigl(\eta_Z(\alpha),\zeta_Z(\alpha)\bigr)=
-\alpha\bigl([\eta,\zeta] \bigr)$ defines a $U$--invariant K\"ahler form
on $Z=U\cdot\alpha$ such that the map $\mu\colon Z\to\lie{u}^*$,
$\mu(u\cdot\alpha)=-\Ad(u)\alpha$, is a moment map on $Z$. Identifying $Z$ with
$U/U_\xi$ where $U_\xi$ denotes the centralizer of $\xi$ in $U$, the
gradient map with respect to the action of $U^\mbb{C}$ on $Z$ is given by
$\mu_{\im\lie{u}}\colon U/U_\xi\to\im\lie{u}$,
$uU_\xi\mapsto-\Ad(u)\xi$. The $U^\mbb{C}$--action on $U\cdot\xi\cong
U^\mbb{C}/P$ induces a $G$--action on $U\cdot\xi$.

\begin{prop}[\cite{HeSt1}]\label{Prop:CoadjointOrbits}
If $\xi\in\lie{p}$, then $X:=K\cdot\xi=G\cdot\xi$ is a Lagrangian submanifold
of $Z\cong U\cdot\xi$.
\end{prop}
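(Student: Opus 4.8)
The plan is to establish three things: that $X = K\cdot\xi = G\cdot\xi$, that $X$ sits inside $Z$, and that this submanifold is Lagrangian. Since $\xi \in \lie{p} \subset \im\lie{u}$, the element $\xi$ lies in $Z = U\cdot\xi$ (the adjoint orbit), so $X := K\cdot\xi \subset Z$ is immediate. To see $G\cdot\xi = K\cdot\xi$, recall the Cartan decomposition $G = K\exp(\lie{p})$: it suffices to show $\exp(\eta)\cdot\xi = \xi$ for $\eta \in \lie{p}$, where the dot denotes the relevant $G$-action on $Z$. The subtlety flagged in the excerpt is that the $U^\mbb{C}$-action on $Z \cong U^\mbb{C}/P$ is \emph{not} the adjoint action; however, the gradient map $\mu_{\im\lie{u}}\colon U/U_\xi \to \im\lie{u}$, $uU_\xi \mapsto -\Ad(u)\xi$, does involve the adjoint orbit, and under the identification $Z \cong U/U_\xi$ the base point corresponds to $\xi$ (up to sign). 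I would argue that $\xi$ is a critical point of the function $z \mapsto \langle \mu_{\im\lie{u}}(z), \xi\rangle$ (equivalently $z\mapsto \|\mu_{\im\lie{u}}(z)\|^2$ has $-\xi$ as a critical value on the compact orbit), so by the standard gradient-flow description $\exp(t\eta)\cdot\xi$ converges and in fact is constant for $\eta$ in the stabilizer directions; more directly, the vector field generated by $\eta \in \lie{p}$ at the point $\xi$ is the gradient of $\langle \mu_{\im\lie{u}}, \eta\rangle$, whose differential at $\xi$ pairs with $[\xi,\cdot]$ — and one checks this vanishes precisely because $\xi$ is the moment-map value there. This gives $G\cdot\xi = K\cdot\xi$, which is compact, hence closed, and is a submanifold.

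For the Lagrangian property I would use the setup of Example~\ref{Ex:TotallyReal} together with the formula $\mu = \mu_\lie{k} + \mu_\lie{p}$ under the chosen identifications. The key point is that on $X = K\cdot\xi$ the $\lie{p}$-part of the moment map is the gradient map $\mu_\lie{p}$, and here $\mu_\lie{p}(k\cdot\xi) = \Ad(k)\xi$ is just the inclusion $X \hookrightarrow \lie{p}$ composed with nothing — in particular $X$ is an orbit on which $\mu_\lie{p}$ is as non-degenerate as possible. I would show $X$ is isotropic by computing $\omega$ on tangent vectors $\eta_Z(\xi), \zeta_Z(\xi)$ for $\eta,\zeta \in \lie{k}$: using the Kirillov–Kostant–Souriau formula $\omega(\eta_Z(\alpha),\zeta_Z(\alpha)) = -\alpha([\eta,\zeta])$ quoted just above the proposition (with $\alpha \leftrightarrow \xi$), this becomes (up to sign and the $\im$-identification) $\langle \xi, [\eta,\zeta]\rangle$. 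Since $\xi \in \lie{p} = \lie{g}\cap\im\lie{u}$ and $\eta,\zeta \in \lie{k} \subset \lie{u}$, we have $[\eta,\zeta]\in\lie{k}$, and $\langle \lie{p}, \lie{k}\rangle = 0$ because $\lie{k}$ and $\im\lie{u}$ are $\langle\cdot,\cdot\rangle$-orthogonal (the inner product on $\lie{u}$ extended; here $\lie{k}\subset\lie{u}$ while $\lie{p}\subset\im\lie{u}$). Hence $\omega|_{TX} \equiv 0$, so $X$ is isotropic.

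It then remains to check the dimension count $\dim_\mbb{R} X = \tfrac12\dim_\mbb{R} Z$, which upgrades "isotropic" to "Lagrangian". Here $Z = U\cdot\xi \cong U/U_\xi$ and $X = K\cdot\xi \cong K/K_\xi$, so I need $\dim K - \dim K_\xi = \tfrac12(\dim U - \dim U_\xi)$. I would deduce this from the restricted-root-space decomposition $\lie{g} = \lie{g}_0 \oplus \bigoplus_{\lambda\in\Lambda}\lie{g}_\lambda$ relative to a maximal Abelian $\lie{a}\subset\lie{p}$ containing $\xi$: writing everything in terms of the $\pm1$-eigenspaces of $\theta$ and using that $\lie{u} = \lie{k}\oplus\im\lie{p}$ decomposes compatibly, the stabilizer $\lie{u}_\xi$ splits as $\lie{k}_\xi \oplus (\im\lie{p})_\xi$, and the $\Ad(K)$- versus $\Ad(U)$-orbit dimensions balance exactly because $\lie{p}$ (in which $\xi$ lives) pairs the root spaces in a way that matches $\lie{k}$-directions to $\im\lie{p}$-directions. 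This dimension argument — tracking how $\theta$ interacts with the centralizer of $\xi$ — is the step I expect to require the most care; the isotropy computation itself is a one-line application of the KKS formula plus orthogonality of $\lie{k}$ and $\lie{p}$.
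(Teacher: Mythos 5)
The reduction of $G\cdot\xi = K\cdot\xi$ to the claim that $\exp(\eta)\cdot\xi = \xi$ for all $\eta\in\lie{p}$ is incorrect, and the paragraph you devote to it does not repair it: $\exp(\lie{p})$ is \emph{not} contained in the isotropy of $\xi$. The isotropy is $Q = P\cap G$, whose Lie algebra $\lie{q}$ contains only $\lie{a}$ together with the $\ad\xi$-eigenspaces of $\lie{g}$ with non-negative eigenvalue; the negative $\ad\xi$-eigenspaces meet $\lie{p}$ nontrivially whenever $\xi$ is not central, and for $\eta$ in such a space the flow $t\mapsto\exp(t\eta)\cdot\xi$ is a non-constant gradient trajectory of $\mu^\eta=\langle\mu_{\im\lie{u}},\eta\rangle$. (Concretely: $G={\rm SL}(2,\mbb{R})\subset{\rm SL}(2,\mbb{C})$, $\xi=\diag(1,-1)$, $Z\cong\mbb{P}^1$; the positive-definite symmetric matrices $\exp(\lie{p})$ do not fix the point $eP$.) Your assertion that ``one checks this vanishes because $\xi$ is the moment-map value there'' is false: $\xi$ is a critical point of $\mu^\xi$, not of $\mu^\eta$ for arbitrary $\eta\in\lie{p}$. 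What you actually need is the Iwasawa-type decomposition $G=KQ$, which holds because $Q$ contains a minimal parabolic subgroup; it yields $G\cdot\xi\cong G/Q\cong K/(K\cap Q)=K/K_\xi=K\cdot\xi$ directly.

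The isotropy part of your argument is essentially right, though sloppily phrased: with $\alpha\leftrightarrow\xi$ one has $\omega_\xi(\eta_Z,\zeta_Z)=-\alpha([\eta,\zeta])$ for $\eta,\zeta\in\lie{k}$, and $[\eta,\zeta]\in\lie{k}$, so the vanishing amounts to $\langle\im\xi,\lie{k}\rangle=0$, i.e.\ to the orthogonality of $\lie{k}$ and $\im\lie{p}$ \emph{inside} $\lie{u}$ (your phrase ``$\lie{k}$ and $\im\lie{u}$ are orthogonal'' does not parse, since the inner product lives on $\lie{u}$); this holds once $\langle\cdot,\cdot\rangle$ is chosen invariant under the conjugation cutting out $\lie{g}$. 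An equivalent and cleaner route is to note $\mu_\lie{k}|_X\equiv0$, since $\mu(k\cdot\xi)=-\Ad(k)\xi\in\lie{p}$, and a $K$-orbit in the zero level of the $K$-moment map is automatically isotropic. Finally, the dimension count you postpone as delicate is in fact immediate once $G\cdot\xi\cong G/Q$ is available: for $G$ a real form of $U^\mbb{C}$ the decomposition of $\lie{g}$ into $\ad\xi$-eigenspaces complexifies to that of $\lie{u}^\mbb{C}$, so $\lie{q}^\mbb{C}=\Lie(P)$ and hence $\dim_\mbb{R}(G/Q)=\dim_\mbb{R}\lie{g}-\dim_\mbb{R}\lie{q}=\dim_\mbb{C}\lie{u}^\mbb{C}-\dim_\mbb{C}\Lie(P)=\dim_\mbb{C}Z=\tfrac12\dim_\mbb{R}Z$; no restricted-root bookkeeping is needed.
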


The $G$-isotropy at $\xi$ is given by the parabolic subgroup $Q:=P\cap G$ of
$G$, so $G\cdot\xi$ is isomorphic to $G/Q$ and to $K/K_\xi$ if $\xi\in\lie{p}$.
Note also that $G/Q$ is a compact $G$--invariant submanifold of $U^\mbb{C}/P$
and in particular a $G$-gradient manifold with gradient map $\mu_\lie{p}\colon
K/K_\xi\to\lie{p}$, $\mu_\lie{p}(kK_\xi)=-\Ad(k)\xi$.

\begin{ex}
Consider the action of $G={\rm{SL}}(2,\mbb{R})$ on projective space
$Z=\mbb{P}_1(\mbb{C})$ induced by the standard representation of $G$ on
$\mbb{C}^2$. Note that $G$ is a compatible subgroup of
$U^\mbb{C}={\rm{SL}}(2,\mbb{C})$ where $U={\rm{SU}}(2)$. Moreover, $Z$ can be
realized as the coadjoint orbit $U^\mbb{C}/B$ where $B$ is the Borel subgroup
$B=\left\{
\left(
\begin{array}{cc}
  z & w \\
  0 & z^{-1} \\
\end{array}%
\right);z\in\mbb{C}^*, w\in\mbb{C}
\right\}$. Then $Z$ can be viewed as a $2$-sphere in the $3$-dimensional space
$\im\lie{u}$. The gradient map $\mu_\lie{p}$ is the projection onto the
$2$-dimensional subspace $\lie{p}$ of $\im\lie{u}$. The action of $K$ on
$\im\lie{u}$ is given by rotation around the axes perpendicular to $\lie{p}$.
We observe that $\mu_\lie{p}$ almost separates the $K$--orbits, but that it
does not separate all $K$--orbits. This corresponds to the fact that there
exist two open orbits with respect to the action of a minimal parabolic
subgroup of $G$.
\end{ex}

If $G=U^\mbb{C}$ is complex reductive and acts algebraically on a connected
algebraic variety $Z$, then the fibers of the moment map $\mu$ are connected
(\cite{HeHu}). Also, if $Z$ is spherical, then $\mu$ globally separates the
$U$--orbits. The example above shows that one cannot expect $\mu_\lie{p}$ to
separate the $K$--orbits globally for actions of real-reductive groups due to
the non-connectedness of the $\mu_\lie{p}$--fibers. Moreover, in the complex
case an open orbit of a Borel subgroup is unique and dense in $Z$ while this is
no longer true for real-reductive groups.

\section{The generic fibers of the restricted gradient map}

By equivariance, the moment map $\mu\colon Z\to\lie{u}^*$ maps each orbit
$U\cdot z$ onto the orbit $U\cdot\mu(z)\subset\lie{u}^*$. Moreover, the
restriction $\mu|_{U\cdot z}\colon U\cdot z\to U\cdot\mu(z)$ is a smooth fiber
bundle with fiber $U_{\mu(z)}/U_z$. Theorem~26.5 in~\cite{GS} states that
generically these fibers are tori; in~\cite{HuWu} this theorem is applied to
characterize coisotropic $U$--actions.

In this section we generalize these results in our context. Let $x\in X$ and
let $\lie{a}$ be a maximal Abelian subspace of $\lie{p}$ with
$\mu_\lie{p}(x)\in\lie{a}$. Our goal is to prove that generically the group
$M=\mathcal{Z}_K(\lie{a})$ has an open orbit in the fiber
$K_{\mu_\lie{p}(x)}/K_x$ of $\mu_\lie{p}\colon K\cdot x\to
K\cdot\mu_\lie{p}(x)$. For this we first have to discuss the notion of generic
elements in $X$.

\subsection{Generic elements}

There are several natural definitions of generic elements $x\in X$. We could
require that the $K$--orbit through $x$ has maximal dimension, or that the
$K$--orbit through $\mu_\lie{p}(x)$ has maximal dimension in $\mu_\lie{p}(X)$,
or that the rank of $\mu_\lie{p}$ in $x$ is maximal. It will turn out that we
need all three properties.

\begin{defn}\label{Defn:generic}
The element $x\in X$ is called generic if
\begin{enumerate}
\item the dimension of $K\cdot x$ is maximal,
\item the rank of $\mu_\lie{p}$ in $x$ is maximal, and
\item the dimension of $K\cdot\mu_\lie{p}(x)$ is maximal in $\mu_\lie{p}(X)$.
\end{enumerate}
We write $X_{\gen}$ for the set of generic elements in $X$.
\end{defn}

\begin{rem}
In the complex case we have $\rk_z\mu=\dim U\cdot z$; hence, condition~(2) in
Definition~\ref{Defn:generic} is superfluous in this case.
\end{rem}

For the following lemma we need the analyticity of $\mu_\lie{p}$ and of the
$K$--action on $X$.

\begin{lem}
The set $X_{\gen}$ is $K$--invariant, open and dense in $X$.
\end{lem}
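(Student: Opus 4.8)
The plan is to establish each of the three defining conditions of genericity separately as an open dense condition, and then intersect. Since $X/G$ is connected, so is $X$ (a $G$-invariant set maps onto $X/G$, and $G$-orbits are connected because $K\exp(\lie p)$ is connected... but actually $G$ may have finitely many components; still $X$ is connected because $X/G$ is connected and the fibers $G\cdot x$ are, if not connected, at least have finitely many components mapping continuously — more carefully, $X$ is a real-analytic manifold and connectedness of $X/G$ together with openness of the quotient map forces $X$ connected). On a connected real-analytic manifold, the upper-semicontinuity arguments below combine with irreducibility to give density, not merely nonempty interior.

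\emph{Condition (2): maximal rank of $\mu_\lie p$.} The function $x\mapsto\rk_x\mu_\lie p$ is lower semicontinuous, so the locus where it attains its maximum $r$ is open. Because $\mu_\lie p$ is real-analytic, the set where $\rk_x\mu_\lie p<r$ is a real-analytic subset of $X$ (locally the common zero set of the $r\times r$ minors of the Jacobian), hence nowhere dense in the connected manifold $X$ unless it is all of $X$; since by definition of $r$ it is not all of $X$, its complement is dense. This uses analyticity essentially, as the remark preceding the lemma signals. $K$-invariance is immediate from $K$-equivariance of $\mu_\lie p$ and the fact that $\Ad(k)$ acts linearly on $\lie p$, so $\rk_{k\cdot x}\mu_\lie p=\rk_x\mu_\lie p$.

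\emph{Conditions (1) and (3): maximal orbit dimensions.} For (1), $\dim K\cdot x=\dim K-\dim K_x$, and $x\mapsto\dim K_x$ is upper semicontinuous because the $K$-action is real-analytic (the isotropy algebra $\lie k_x$ is the kernel of $\lie k\to T_xX$, $\xi\mapsto\xi_X(x)$, whose rank is lower semicontinuous; and real-analyticity makes the jump locus analytic). Hence $\{x:\dim K\cdot x\text{ maximal}\}$ is open, and analytic-subset-complement dense in $X$ by the same irreducibility argument; it is clearly $K$-invariant. For (3) one runs the identical argument on $\mu_\lie p(X)$: the function $x\mapsto\dim K\cdot\mu_\lie p(x)$ is the composite of the lower-semicontinuous $y\mapsto\dim K\cdot y$ on $\lie p$ with the continuous $\mu_\lie p$, hence lower semicontinuous on $X$, so its maximum locus is open; and since $y\mapsto\dim K\cdot y$ jumps along an analytic subset of $\lie p$ (orbit-dimension stratification of a linear action is algebraic) and $\mu_\lie p$ is analytic, the preimage of the non-maximal locus is an analytic subset of $X$, hence nowhere dense. $K$-invariance is again clear. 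Finally, $X_{\gen}$ is the intersection of three $K$-invariant open dense sets; a finite intersection of open dense sets in any topological space is open dense (density of the intersection uses that $X$ is a Baire space, e.g.\ a manifold), so $X_{\gen}$ is $K$-invariant, open and dense. The main obstacle is ensuring density rather than just nonempty interior: this is exactly where connectedness of $X$ (from connectedness of $X/G$) and real-analyticity enter, since a proper analytic subset of a connected real-analytic manifold has empty interior and dense complement.
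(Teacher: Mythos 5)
Your proof is essentially correct and rests on the same mechanism as the paper's — each of the three conditions is open by semicontinuity and its failure locus is a proper real-analytic subset, hence has dense complement — but the implementation differs in two places. For condition (1) the paper simply cites the Slice Theorem via Bredon, whereas you give a self-contained argument via semicontinuity of $\dim K_x$ and analyticity of the rank-jump locus; the two are interchangeable. For condition (3) the paper first restricts to the open dense set $X'$ where (1) and (2) hold, uses the constant rank of $\mu_\lie{p}$ there to put it into a $K$-invariant projection normal form, and then pulls back the non-maximal locus from a local slice $V$ of $\mu_\lie{p}(X)$. You instead pull back directly from $\lie{p}$: with $m:=\max_{x\in X}\dim K\cdot\mu_\lie{p}(x)$, the set $\{y\in\lie{p}:\dim K\cdot y<m\}$ is algebraic (vanishing of appropriate minors of the linear map $\xi\mapsto[\xi,y]$), so its $\mu_\lie{p}$-preimage is a proper analytic subset of $X$. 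That sidesteps the constant-rank normal form and the construction of $K$-invariant coordinates, which is a genuine simplification. One small wording issue: ``the preimage of the non-maximal locus'' needs to be spelled out as non-maximality relative to the value $m$ actually attained on $\mu_\lie{p}(X)$, not relative to the generic orbit dimension in all of $\lie{p}$; with the latter reading the preimage could be all of $X$.

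The one real soft spot is your assertion that $X$ is connected. The parenthetical you offer (openness of the quotient map together with $X/G$ connected) does not establish it. What does follow, and is all the paper claims, is that $X/K$ is connected: the fibers of the open surjection $X/K\to X/G$ are continuous images of $K\backslash G\cong\lie{p}$, hence connected. If $K$ (equivalently $G$) is disconnected, $X$ may have several components, permuted transitively by $\pi_0(K)\cong\pi_0(G)$. The step ``a proper analytic subset of a connected real-analytic manifold has dense complement'', on which you — and implicitly the paper — rely, then needs a supplement: the failure loci are $K$-invariant analytic sets, so if one of them had nonempty interior in some component it would, by transitivity of $\pi_0(K)$ on $\pi_0(X)$, exhaust every component and hence all of $X$, contradicting the fact that the relevant maxima are attained. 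With that caveat filled in, your proof is complete and a little leaner than the paper's.
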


\begin{proof}
Since $X/G$ is connected, the same is true for $X/K$. It is then a well-known
consequence of the Slice Theorem that the set of points $x\in X$ such that
$K\cdot x$ has maximal dimension is open and dense in $X$ (see Theorem~3.1,
Chapter~IV in~\cite{Br}). Since $\mu_\lie{p}\colon X\to\lie{p}$ is
real-analytic, its maximal rank set is also open and dense. Hence, $X':=
\{x\in X;\ \dim K\cdot x, \rk_x\mu_\lie{p}\text{ maximal}\}$ is open and dense
in $X$.

We prove the lemma by showing that $X'\setminus X_{\gen}$ is analytic in $X'$.
Let $x_0\in X'\setminus X_{\gen}$. Since $\mu_\lie{p}$ has constant rank on
$X'$, there are local analytic coordinates $(x,U)$ around $x_0$ in $X$ and
$(y,V)$ around $\mu_\lie{p}(x_0)$ in $\mu_\lie{p}(X)$ in which $\mu_\lie{p}$
takes the form $\mu_\lie{p}(x_1,\dotsc,x_n)=(x_1,\dotsc,x_k)$. Since
$\mu_\lie{p}$ is $K$--equivariant, $U$ and $V$ may be chosen $K$--invariant.
Since $A:=\{y\in V;\ \dim K\cdot y\text{ is not maximal in $V$}\}$ is analytic
in $V$, we see that $(X'\setminus X_{\gen})\cap U=\mu_\lie{p}^{-1}(A)$ is
analytic in $U$. Thus $X'\setminus X_{\gen}$ is locally analytic in $X$ and
since it is closed, it is analytic.
\end{proof}

\subsection{The $M$--action on $\mu_\lie{p}^{-1}\bigl(\mu_\lie{p}(x)\bigr)$}

In this subsection we discuss the restricted gradient map
\begin{equation*}
\mu_\lie{p}|_{K\cdot x}\colon K\cdot x\to K\cdot\mu_\lie{p}(x).
\end{equation*}
Recall that this map is a smooth fiber bundle with fiber
$K_{\mu_\lie{p}(x)}/K_x$.

\begin{rem}
Let $\lie{a}$ be a maximal Abelian subspace of $\lie{p}$. Then we have
$M\subset K_{\mu_\lie{p}(x)}$ for every $x\in X$ with
$\mu_\lie{p}(x)\in\lie{a}$. Note that every $K$--orbit in $X$ intersects
$\mu_\lie{p}^{-1}(\lie{a})$.
\end{rem}

We will need the following lemma which extends the corresponding result
in~\cite{GS}.

\begin{lem}\label{Lem:TechnicalFact}
For every $x\in X_{\gen}$ we have
$[\lie{k}_{\mu_\lie{p}(x)},\lie{p}_{\mu_\lie{p}(x)}]\subset\lie{p}_x$.
\end{lem}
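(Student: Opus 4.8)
The plan is to work at the level of the ambient complex moment map $\mu=\mu_\lie{k}+\mu_\lie{p}$ on $Z$ and use the Kähler structure to convert the inclusion into an orthogonality statement, then exploit the genericity hypotheses. Fix $x\in X_{\gen}$, write $y=\mu_\lie{p}(x)$, and choose a maximal Abelian $\lie{a}\subset\lie{p}$ with $y\in\lie{a}$. First I would recall the basic infinitesimal formula for the gradient map: for $\xi\in\lie{p}$ the vector $\xi\cdot x\in T_xX$ is (up to sign and the metric identification) the gradient of the component function $\mu_\lie{p}^\xi$, so that $\langle \xi\cdot x, v\rangle = d\mu_\lie{p}^\xi(v)$ for $v\in T_xX$; and $\lie{k}\cdot x$ is tangent to the level set $\mu_\lie{p}^{-1}(y)$, i.e. $\lie{k}\cdot x\subset (\lie{p}\cdot x)^\perp$ with equality precisely at maximal-rank points (Lemma~5.1 in~\cite{HeSchw}), which applies since $x$ is generic. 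Thus $T_x\mu_\lie{p}^{-1}(y)=(\lie{p}\cdot x)^\perp\supset\lie{k}\cdot x$, and I would like to produce, for $\zeta\in\lie{k}_{y}$ and $\eta\in\lie{p}_{y}$, the bracket $[\zeta,\eta]\in\lie{k}$ and show $[\zeta,\eta]\cdot x=0$.

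The key computation is the following. Since $\zeta\in\lie{k}_y$ fixes $y$, the vector field $\zeta_X$ is tangent to $\mu_\lie{p}^{-1}(y)$, so its flow preserves this submanifold; hence I can differentiate the relation $\eta\cdot x \in$ (something controlled) along this flow. Concretely, using $K$-equivariance of $\mu_\lie{p}$ and $[\zeta,\eta]\in\lie{g}$, one has the standard identity $[\zeta,\eta]\cdot x = \zeta\cdot(\eta\cdot x) - \eta\cdot(\zeta\cdot x)$ as tangent vectors, interpreted via the Lie-algebra action. Now $\eta\in\lie{p}_y$ means, by the gradient-map formula above, that $d\mu_\lie{p}^\eta$ vanishes on $\lie{k}\cdot x$ — equivalently $\langle \eta\cdot x, \lie{k}\cdot x\rangle=0$, i.e. $\eta\cdot x\in(\lie{k}\cdot x)^\perp=\lie{p}\cdot x$ at generic $x$. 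So both $\eta\cdot x\in\lie{p}\cdot x$ and I want $[\zeta,\eta]\cdot x\in\lie{p}\cdot x$ as well (since $[\zeta,\eta]\in\lie{k}$, this will force it to lie in $\lie{p}\cdot x\cap\lie{k}\cdot x$, and then a dimension/genericity argument closes it). Differentiating $\eta\cdot x\in\lie{p}\cdot x$ — more precisely the scalar conditions $\langle\eta\cdot x,\kappa\cdot x\rangle=0$ for $\kappa\in\lie{k}$ — along the flow of $\zeta$ and using that $\zeta\cdot x$ itself satisfies maximal-rank relations should yield $\langle[\zeta,\eta]\cdot x,\kappa\cdot x\rangle=0$ for all $\kappa$, hence $[\zeta,\eta]\cdot x\in\lie{p}\cdot x$.

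To finish I would use the three genericity conditions. At a generic $x$ the stabilizer dimension is locally constant on $X_{\gen}$, which gives that $\lie{k}_x$ has minimal dimension; combined with $[\zeta,\eta]\cdot x\in\lie{p}\cdot x\cap\lie{k}\cdot x$, I want to conclude this intersection is forced into $\lie{p}\cdot x$ in a way that identifies $[\zeta,\eta]$ itself with an element of $\lie{p}_x$ modulo $\lie{k}_x$ — here is where condition (3), maximality of $\dim K\cdot\mu_\lie{p}(x)$, enters: it controls $\dim\lie{k}_y$ and forces the map $\lie{k}_y\to T_y(K\cdot y)^\perp$-type arguments, so that the "extra" direction $[\zeta,\eta]\cdot x$ cannot be a genuinely new tangent vector. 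A clean way to package this: show that the generic $K$-orbit and generic $\mu_\lie{p}$-level-set dimensions being what they are forces $\lie{k}\cdot x\cap\lie{p}\cdot x\subset\{0\}$ is \emph{false} in general, so instead one argues $[\zeta,\eta]$ acts trivially by comparing $\dim K\cdot x$ at $x$ with its value at nearby points of the form $\exp(t\eta)x$ (which lie in $X$ since... no — $\eta\in\lie{p}$ need not preserve $X$, so instead use nearby points in the same $K$-orbit type). The main obstacle I anticipate is exactly this last step: turning the orthogonality/bracket identity into the \emph{vanishing} $[\zeta,\eta]\cdot x=0$ rather than merely $[\zeta,\eta]\cdot x\in\lie{p}\cdot x$, and this is where all three parts of Definition~\ref{Defn:generic} must be used simultaneously, mirroring (and generalizing) the torus argument of Theorem~26.5 in~\cite{GS}; I would model the bookkeeping on that proof, replacing $U$ by $K$ and the moment map by $\mu_\lie{p}$ throughout.
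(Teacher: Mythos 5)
Your proposal contains both a concrete algebraic error and a strategic gap, and neither of the ways you try to close the argument would work.

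First, the error: for $\zeta\in\lie{k}$ and $\eta\in\lie{p}$ the bracket $[\zeta,\eta]$ lies in $\lie{p}$, \emph{not} in $\lie{k}$. This matters because your plan hinges on producing $[\zeta,\eta]\cdot x\in\lie{p}\cdot x$ and then intersecting with $\lie{k}\cdot x$. But since $[\zeta,\eta]\in\lie{p}$, the containment $[\zeta,\eta]\cdot x\in\lie{p}\cdot x$ is true by definition and carries no information. The actual goal, $[\zeta,\eta]\in\lie{p}_x$, means the vector field $[\zeta,\eta]_X$ \emph{vanishes} at $x$, and none of the intersection-with-$\lie{k}\cdot x$ reasoning gets you there. (Incidentally, the worry that ``$\eta\in\lie{p}$ need not preserve $X$'' is also unfounded: $X$ is $G$-invariant by hypothesis, so all of $\lie{g}$ acts on $X$.)

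Second, the strategic gap, which you partly sense yourself: differentiating the identity $\langle\mu_\lie{p}(\cdot),[\xi,\eta]\rangle=0$ only along the flow of $\zeta\in\lie{k}_y$ (or more generally along the $K$-orbit) gives orthogonality of $[\xi,\eta]_X(x)$ against directions in $\lie{k}\cdot x$ only; to conclude $[\xi,\eta]_X(x)=0$ you need orthogonality against \emph{all} of $T_xX$. The paper's device for this is precisely what all three genericity conditions are for: they make
\begin{equation*}
E:=\bigl\{(x,\xi,\eta)\in X_{\gen}\times\lie{k}\times\lie{p};\ \xi\in\lie{k}_{\mu_\lie{p}(x)},\ \eta\in\lie{p}_{\mu_\lie{p}(x)}\bigr\}
\end{equation*}
into a locally trivial vector subbundle of $X_{\gen}\times\lie{k}\times\lie{p}$ (constancy of the stabilizer dimensions). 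That local triviality is exactly what lets one lift an \emph{arbitrary} curve $x_t$ in the open set $X_{\gen}$ to a curve $(x_t,\xi_t,\eta_t)$ in $E$ with prescribed $(\xi_0,\eta_0)=(\xi,\eta)$. Differentiating $\langle\mu_\lie{p}(x_t),[\xi_t,\eta_t]\rangle=0$ at $t=0$ then kills the $\dot{\xi}_0$ and $\dot{\eta}_0$ terms (because $[\xi,\mu_\lie{p}(x)]=[\eta,\mu_\lie{p}(x)]=0$) and leaves $g_x\bigl([\xi,\eta]_X(x),\dot{x}_0\bigr)=0$ for \emph{every} $\dot{x}_0\in T_xX$, giving the conclusion immediately. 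You identified the right infinitesimal tool (the gradient identity $g_x(\theta_X(x),v)=\langle(\mu_\lie{p})_{*,x}v,\theta\rangle$ for $\theta\in\lie{p}$) but applied it only in $K$-directions; without the bundle-lifting idea the proof does not close, and no amount of juggling with $\dim\lie{k}_y$ or comparing orbit dimensions along $\exp(t\eta)x$ will substitute for it.
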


\begin{proof}
By definition of $X_{\gen}$ the set
\begin{equation*}
E:=\bigl\{(x,\xi,\eta)\in X_{\gen}\times\lie{k}\times\lie{p};\ \xi\in
\lie{k}_{\mu_\lie{p}(x)},\eta\in\lie{p}_{\mu_\lie{p}(x)}\bigr\}
\end{equation*}
is a linear subbundle of the trivial bundle
$X_{\gen}\times\lie{k}\times\lie{p}\to X_{\gen}$.

Let $\xi\in\lie{k}_{\mu_\lie{p}(x)}$ and $\eta\in\lie{p}_{\mu_\lie{p}(x)}$, and
let $x_t$ be a smooth curve in $X_{\gen}$ with $x_0=x$. Since $E\to X_{\gen}$
is locally trivial, we find a smooth curve $(x_t,\xi_t,\eta_t)$ in $E$ with
$\xi_0=\xi$ and $\eta_0=\eta$. Since $[\xi_t,\eta_t]\in
\lie{p}_{\mu_\lie{p}(x_t)}$ for all $t$ and since the inner product $\langle
\cdot,\cdot\rangle$ on $\lie{p}$ is induced by a $U$--invariant inner product
on $\lie{u}$, we conclude
\begin{equation*}
\bigl\langle\mu_\lie{p}(x_t),[\xi_t,\eta_t]\bigr\rangle
=-\bigl\langle\bigl[\xi_t,\mu_\lie{p}(x_t)\bigr],\eta_t\bigr\rangle=0
\end{equation*}
for all $t$. Differentiating and evaluating at $t=0$ yields
\begin{align*}
0&=\bigl\langle(\mu_\lie{p})_{*,x}\dot{x}_0,[\xi,\eta]\bigr\rangle+
\bigl\langle\mu_\lie{p}(x),[\dot{\xi}_0,\eta]\bigr\rangle+
\bigl\langle\mu_\lie{p}(x),[\xi,\dot{\eta}_0]\bigr\rangle\\
&=\bigl\langle(\mu_\lie{p})_{*,x}\dot{x}_0,[\xi,\eta]\bigr\rangle-
\bigl\langle\bigl[\eta,\mu_\lie{p}(x)\bigr],\dot{\xi}_0\bigr\rangle-
\bigl\langle\bigl[\xi,\mu_\lie{p}(x)\bigr],\dot{\eta}_0\bigr\rangle\\
&=\bigl\langle(\mu_\lie{p})_{*,x}\dot{x}_0,[\xi,\eta]\bigr\rangle=
g_x\bigl([\xi,\eta]_X(x),\dot{x}_0\bigr).
\end{align*}
Since $X_{\gen}$ is open, every tangent vector $v\in T_xX$ is of the form $v=
\dot{x}_0$ for some curve $x_t$ which implies $[\xi,\eta]_X(x)=0$, i.\,e.\
$[\xi,\eta]\in\lie{p}_x$.
\end{proof}

Now we are in the position to prove

\begin{prop}\label{Prop:TransitiveAction}
Suppose $x\in X_{\gen}\cap\mu_\lie{p}^{-1}(\lie{a})$. Then the orbit $M\cdot x$
is open in $\mu_\lie{p}^{-1}\bigl(\mu_\lie{p}(x)\bigr)\cap(K\cdot x)$.
\end{prop}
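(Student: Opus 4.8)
The plan is to reduce the assertion to a Lie-algebra inclusion and then to read off that inclusion from the structure of the centraliser $\lie{l}:=\mathcal{Z}_\lie{g}(a)$ of $a:=\mu_\lie{p}(x)$ in $\lie{g}$.

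First, the reduction. By $K$--equivariance of $\mu_\lie{p}$ we have $\mu_\lie{p}^{-1}(a)\cap(K\cdot x)=K_a\cdot x$, where $K_a$ is the isotropy group of $a$ in $K$; it is a compact subgroup of $K$ containing both $M=\mathcal{Z}_K(\lie{a})$ and $K_x$. Since $M\cdot x$ is a compact submanifold of the compact manifold $K_a\cdot x$, it is open in $K_a\cdot x$ as soon as the two orbits have the same dimension, that is, as soon as $\lie{m}\cdot x=\lie{k}_a\cdot x$ inside $T_xX$; and as $\lie{m}$ and $\lie{k}_x$ both lie in $\lie{k}_a$, this is in turn implied by the inclusion
\begin{equation*}
\lie{k}_a\subset\lie{m}+\lie{k}_x ,
\end{equation*}
which is the real goal.

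Now for the Lie-algebra work. The only input coming from the genericity of $x$ will be Lemma~\ref{Lem:TechnicalFact}, which here reads $[\lie{k}_a,\lie{p}_a]\subset\lie{p}_x$. The algebra $\lie{l}=\lie{k}_a\oplus\lie{p}_a$ is reductive and $\theta$--stable (being the centraliser of the semisimple element $a$). By the standard identity $[\lie{k}_a,\lie{p}_a]=\lie{p}_a\cap[\lie{l},\lie{l}]$, together with the fact that the central part $\lie{p}_a\cap\lie{z}(\lie{l})$ is annihilated by all brackets of $\lie{l}$, one gets $[\lie{p}_a,\lie{p}_a]=\bigl[\,[\lie{k}_a,\lie{p}_a],[\lie{k}_a,\lie{p}_a]\,\bigr]$. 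Since $\lie{g}_x=\{\xi\in\lie{g};\ \xi_X(x)=0\}$ is a Lie subalgebra of $\lie{g}$, we have $[\lie{p}_x,\lie{p}_x]\subset\lie{k}_x$, and hence Lemma~\ref{Lem:TechnicalFact} yields $[\lie{p}_a,\lie{p}_a]\subset\lie{k}_x$. On the other hand, a direct check with the Cartan relations and the Jacobi identity shows that $\lie{l}':=\lie{p}_a\oplus[\lie{p}_a,\lie{p}_a]$ is a $\theta$--stable ideal of $\lie{l}$; hence $\lie{l}=\lie{l}'\oplus\lie{r}$ for a $\theta$--stable complementary ideal $\lie{r}$. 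Since $\lie{p}_a\subset\lie{l}'$ we get $\lie{r}\subset\lie{k}_a$, and since $\lie{r}$ commutes with $\lie{l}'\supset\lie{p}_a\supset\lie{a}$ we get $\lie{r}\subset\mathcal{Z}_\lie{k}(\lie{a})=\lie{m}$. Finally $\lie{k}_a=(\lie{l}'\cap\lie{k})\oplus(\lie{r}\cap\lie{k})=[\lie{p}_a,\lie{p}_a]\oplus\lie{r}\subset\lie{k}_x+\lie{m}$, which is the desired inclusion.

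I expect the real work to be the middle step, and in particular the structural bookkeeping with $\lie{l}=\mathcal{Z}_\lie{g}(a)$: everything hinges on the splitting of $\lie{k}_a$ into the piece $[\lie{p}_a,\lie{p}_a]$, which Lemma~\ref{Lem:TechnicalFact} forces into $\lie{k}_x$ through $[\lie{k}_a,\lie{p}_a]\subset\lie{p}_x$, and a complementary ideal $\lie{r}$ that automatically centralises $\lie{a}$ and so lies in $\lie{m}$. One could instead try to argue restricted root space by restricted root space, writing $\lie{k}_a=\lie{m}\oplus\bigoplus_{\lambda(a)=0}\lie{k}_\lambda$, but then one has to keep track of how $\lie{k}_x$ meets the restricted root grading of $\lie{l}$, which the ideal-theoretic argument avoids.
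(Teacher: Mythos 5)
Your argument is correct and genuinely different from the paper's. Both proofs share the same reduction (it suffices to show $\lie{k}_{\mu_\lie{p}(x)}\subset\lie{m}+\lie{k}_x$) and the same essential input, namely Lemma~\ref{Lem:TechnicalFact}, which gives $[\lie{k}_{\mu_\lie{p}(x)},\lie{p}_{\mu_\lie{p}(x)}]\subset\lie{p}_x$ and hence, on bracketing once more, forces $[\lie{p}_{\mu_\lie{p}(x)},\lie{p}_{\mu_\lie{p}(x)}]$ into $\lie{k}_x$. Where the two diverge is in how they deduce the target inclusion from this. The paper works through the restricted root space decomposition of $\lie{g}$ relative to $\lie{a}$: it first records the explicit form of $\lie{k}_{\mu_\lie{p}(x)}$ and $\lie{p}_{\mu_\lie{p}(x)}$ (Lemma~\ref{Lem:Centralizer}), and then, restricted root by restricted root, shows that the non-$\lie{m}$ part $\bigl\{\sum_{\lambda\in\Lambda^+(x)}(\xi_\lambda+\theta(\xi_\lambda))\bigr\}$ lands in $\lie{k}_x$, using the $\lie{sl}_2$-triple fact $\lambda[\xi_\lambda,\theta(\xi_\lambda)]\neq0$ (Proposition~\ref{Prop:SLTriples}) to reconstitute $\xi_\lambda+\theta(\xi_\lambda)$ as a double bracket in $\lie{p}^x$. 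You instead exploit only the structure theory of the reductive $\theta$-stable centraliser $\lie{l}=\mathcal{Z}_\lie{g}(\mu_\lie{p}(x))$: the identity $[\lie{k}_a,\lie{p}_a]=\lie{p}_a\cap[\lie{l},\lie{l}]$, the resulting equality $[\lie{p}_a,\lie{p}_a]=[[\lie{k}_a,\lie{p}_a],[\lie{k}_a,\lie{p}_a]]$, and the splitting $\lie{l}=\lie{l}'\oplus\lie{r}$ of the $\theta$-stable ideal $\lie{l}'=\lie{p}_a\oplus[\lie{p}_a,\lie{p}_a]$ with the complementary ideal $\lie{r}$ automatically contained in $\lie{m}$ because it centralises $\lie{a}$. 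I checked each of these steps (the ideal property of $\lie{l}'$ via Jacobi, the $\theta$-stability and existence of a $\theta$-stable complementary ideal via a $\theta$-compatible nondegenerate invariant form, and the final decomposition $\lie{k}_a=[\lie{p}_a,\lie{p}_a]\oplus\lie{r}$) and they hold. Your route is shorter and avoids invoking the restricted root spaces and $\lie{sl}_2$-triples altogether; what you give up is the explicit description of $\lie{k}_{\mu_\lie{p}(x)}$ and $\lie{p}_{\mu_\lie{p}(x)}$ in terms of root vectors, which the paper records as a lemma of independent interest but does not in fact use elsewhere.
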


Let $x\in X_{\gen}\cap\mu_\lie{p}^{-1}(\lie{a})$ be given. In order to prove
Proposition~\ref{Prop:TransitiveAction} it suffices to show that the map
$\lie{m}\to\lie{k}_{\mu_\lie{p}(x)}/\lie{k}_x$ is surjective. For this we need
some information about $\lie{k}_{\mu_\lie{p}(x)}$ and $\lie{k}_x$; the idea is
of course to apply Lemma~\ref{Lem:TechnicalFact} which gives
\begin{equation*}
\bigl[[\lie{k}_{\mu_\lie{p}(x)},\lie{p}_{\mu_\lie{p}(x)}],
[\lie{k}_{\mu_\lie{p}(x)},\lie{p}_{\mu_\lie{p}(x)}]\bigr]\subset[\lie{p}_x,
\lie{p}_x]\subset\lie{k}_x.
\end{equation*}
Consequently we must determine $\lie{k}_{\mu_\lie{p}(x)}$,
$\lie{p}_{\mu_\lie{p}(x)}$ as well as their Lie brackets.

This is most conveniently done via the restricted root space decomposition
$\lie{g}=\lie{g}_0\oplus\bigoplus_{\lambda\in\Lambda}\lie{g}_\lambda$ with
respect to the maximal Abelian subspace $\lie{a}\subset\lie{p}$. The
centralizer $\lie{g}_0$ of $\lie{a}$ in $\lie{g}$ is stable under the Cartan
involution $\theta$ and decomposes as $\lie{g}_0=\lie{m}\oplus\lie{a}$ where
$\lie{m}=\Lie(M)$. For later use we note the following proposition which is
proven in Chapter~VI.5 of~\cite{Kn}.

\begin{prop}\label{Prop:SLTriples}
For each $\lambda\in\Lambda$ we write $\lie{a}_\lambda\subset\lie{a}$ for the
subspace generated by the elements $\bigl[\xi_\lambda,\theta(\xi_\lambda)
\bigr]$ where $\xi_\lambda\in\lie{g}_\lambda$. Then $\dim\lie{a}_\lambda=1$ and
$\lambda\bigl[\xi_\lambda,\theta(\xi_\lambda)\bigr]\not=0$ for every
$0\not=\xi_\lambda\in\lie{g}_\lambda$.
\end{prop}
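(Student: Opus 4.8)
The plan is to realise each restricted root vector inside a copy of $\mathfrak{sl}(2,\mbb{R})$ and to read the statement off from the associated ``coroot'' $h_\lambda=\bigl[\xi_\lambda,\theta(\xi_\lambda)\bigr]$. First I would fix an $\ad$-invariant symmetric bilinear form $B$ on $\lie{g}$ that is negative definite on $\lie{k}$ and positive definite on $\lie{p}$ --- for instance the form induced by $\langle\cdot,\cdot\rangle$ through the identifications $\lie{u}\cong\lie{u}^*$, $\lie{u}\cong\im\lie{u}$, or, on the semisimple ideal $[\lie{g},\lie{g}]$, the Killing form. Then $B_\theta(X,Y):=-B(X,\theta Y)$ is positive definite. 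The elementary facts to record up front are: $\theta(\lie{g}_\lambda)=\lie{g}_{-\lambda}$, $B(\lie{g}_\lambda,\lie{g}_\nu)=0$ unless $\nu=-\lambda$, the pairing of $\lie{g}_\lambda$ with $\lie{g}_{-\lambda}$ by $B$ is non-degenerate, and $B$ is positive definite on $\lie{a}\subset\lie{p}$.

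Now fix $\lambda\in\Lambda$ and $0\neq\xi_\lambda\in\lie{g}_\lambda$, and put $h_\lambda:=\bigl[\xi_\lambda,\theta(\xi_\lambda)\bigr]\in\lie{g}_0$. Applying $\theta$ gives $\theta(h_\lambda)=\bigl[\theta(\xi_\lambda),\xi_\lambda\bigr]=-h_\lambda$, so $h_\lambda$ lies in the $(-1)$-eigenspace of $\theta$ on $\lie{g}_0=\lie{m}\oplus\lie{a}$, i.e.\ $h_\lambda\in\lie{a}$. For $H\in\lie{a}$, $\ad$-invariance of $B$ together with $[\xi_\lambda,H]=-\lambda(H)\xi_\lambda$ yields
\begin{equation*}
B(h_\lambda,H)=-B\bigl(\theta(\xi_\lambda),[\xi_\lambda,H]\bigr)=\lambda(H)\,B\bigl(\xi_\lambda,\theta(\xi_\lambda)\bigr).
\end{equation*}
Write $c_\lambda:=B\bigl(\xi_\lambda,\theta(\xi_\lambda)\bigr)=-B_\theta(\xi_\lambda,\xi_\lambda)<0$. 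Since $B|_\lie{a}$ is non-degenerate and $\lambda\neq 0$, the above identity says precisely that $h_\lambda$ is $c_\lambda$ times the $B|_\lie{a}$-dual of $\lambda\in\lie{a}^*$; in particular $h_\lambda\neq 0$, and up to a positive scalar $h_\lambda$ does not depend on the choice of $\xi_\lambda$. Hence all the vectors $[\xi_\lambda,\theta(\xi_\lambda)]$, $\xi_\lambda\in\lie{g}_\lambda\setminus\{0\}$, lie on one line through the origin, so $\dim\lie{a}_\lambda=1$. Finally, putting $H=h_\lambda$ in the displayed identity gives $B(h_\lambda,h_\lambda)=c_\lambda\,\lambda(h_\lambda)$; since $h_\lambda\in\lie{a}\setminus\{0\}\subset\lie{p}$ we have $B(h_\lambda,h_\lambda)>0$ and $c_\lambda<0$, whence $\lambda\bigl[\xi_\lambda,\theta(\xi_\lambda)\bigr]=\lambda(h_\lambda)<0$, in particular non-zero.

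The only point genuinely needing care is the choice of the form $B$: because $\lie{g}$ is merely reductive its own Killing form may be degenerate. One sidesteps this by using the form inherited from the ambient $U$-invariant inner product, or by restricting to the semisimple ideal $[\lie{g},\lie{g}]$ and noting that every $\lie{g}_\lambda$ with $\lambda\neq 0$ lies in $[\lie{g},\lie{g}]$ while the center of $\lie{g}$ sits inside $\lie{g}_0$, so it carries no restricted-root information. Once a form with $B_\theta$ positive definite is fixed, everything reduces to the short computation above; this is exactly the $\mathfrak{sl}(2)$-triple construction of Chapter~VI.5 in~\cite{Kn}, which one may also simply cite.
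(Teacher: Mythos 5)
Your argument is correct and is precisely the standard $\mathfrak{sl}(2)$-triple computation from Chapter~VI.5 of Knapp, which is all the paper itself offers (it proves nothing here, only cites that reference). The one point worth keeping explicit is the one you already flag: since $\lie{g}$ is only reductive you must work with the invariant form inherited from the ambient $U$--invariant inner product (or restrict to $[\lie{g},\lie{g}]$, which contains every $\lie{g}_\lambda$ with $\lambda\neq0$) rather than the possibly degenerate Killing form of $\lie{g}$.
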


In order to prove Proposition~\ref{Prop:TransitiveAction} we will first
describe the centralizers of $\mu_\lie{p}(x)$ in $\lie{k}$ and in $\lie{p}$.
For this we introduce the subset $\Lambda(x):=\bigl\{\lambda\in\Lambda;\
\lambda\bigl(\mu_\lie{p}(x)\bigr)=0 \bigr\}\subset\Lambda$. We also write
$\Lambda^+(x):=\Lambda(x)\cap\Lambda^+$.

\begin{rem}
If $\lambda\in\Lambda(x)$, then $-\lambda\in\Lambda(x)$. If
$\lambda_1,\lambda_2\in\Lambda(x)$ and $\lambda_1+\lambda_2\in
\Lambda$, then $\lambda_1+\lambda_2\in\Lambda(x)$.
\end{rem}

\begin{lem}\label{Lem:Centralizer}
\begin{enumerate}
\item The centralizer of $\mu_\lie{p}(x)$ in $\lie{g}$ is given by
$\lie{g}_0\oplus\bigoplus_{\lambda\in\Lambda(x)}\lie{g}_\lambda$.
\item We have $\lie{k}_{\mu_\lie{p}(x)}=\lie{m}\oplus\left\{\sum_{\lambda\in
\Lambda^+(x)}\bigl(\xi_\lambda+\theta(\xi_\lambda)\bigr);\ \xi_\lambda\in
\lie{g}_\lambda\right\}$.
\item We have $\lie{p}_{\mu_\lie{p}(x)}=\lie{a}\oplus\left\{\sum_{\lambda\in
\Lambda^+(x)}\bigl(\xi_\lambda-\theta(\xi_\lambda)\bigr);\ \xi_\lambda\in
\lie{g}_\lambda\right\}$.
\end{enumerate}
\end{lem}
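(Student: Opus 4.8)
Write $a:=\mu_\lie{p}(x)\in\lie{a}$ for short. The plan is to compute the full centralizer $\lie{g}_a:=\{\xi\in\lie{g};\ [a,\xi]=0\}$ directly from the restricted root space decomposition, and then to intersect with the $\theta$--eigenspaces $\lie{k}$ and $\lie{p}$. For part~(1), write an arbitrary $\xi\in\lie{g}$ as $\xi=\xi_0+\sum_{\lambda\in\Lambda}\xi_\lambda$ with $\xi_0\in\lie{g}_0$ and $\xi_\lambda\in\lie{g}_\lambda$. Since $\ad(a)$ acts by zero on $\lie{g}_0$ and by the scalar $\lambda(a)$ on $\lie{g}_\lambda$, we have $[a,\xi]=\sum_{\lambda\in\Lambda}\lambda(a)\xi_\lambda$, and by directness of the decomposition this vanishes exactly when $\xi_\lambda=0$ for every $\lambda$ with $\lambda(a)\neq0$, i.e.\ for every $\lambda\notin\Lambda(x)$. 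Hence $\lie{g}_a=\lie{g}_0\oplus\bigoplus_{\lambda\in\Lambda(x)}\lie{g}_\lambda$.

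For parts~(2) and~(3) I use the standard facts that $\theta(\lie{g}_\lambda)=\lie{g}_{-\lambda}$ for every $\lambda\in\Lambda$ and that $\lie{g}_0=\lie{m}\oplus\lie{a}$ with $\lie{m}\subset\lie{k}$ and $\lie{a}\subset\lie{p}$. Since $\lambda(a)=0$ if and only if $(-\lambda)(a)=0$, the set $\Lambda(x)$ is invariant under $\lambda\mapsto-\lambda$ (as already observed in the Remark preceding the lemma); in particular $\lie{g}_a$ is $\theta$--stable, so that $\lie{k}_a=\lie{g}_a\cap\lie{k}$ and $\lie{p}_a=\lie{g}_a\cap\lie{p}$ are precisely the $(+1)$-- and $(-1)$--eigenspaces of $\theta|_{\lie{g}_a}$. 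Grouping opposite roots, $\bigoplus_{\lambda\in\Lambda(x)}\lie{g}_\lambda=\bigoplus_{\lambda\in\Lambda^+(x)}\bigl(\lie{g}_\lambda\oplus\lie{g}_{-\lambda}\bigr)$, one writes $\xi\in\lie{g}_a$ uniquely as
\begin{equation*}
\xi=\xi_\lie{m}+\xi_\lie{a}+\sum_{\lambda\in\Lambda^+(x)}(\xi_\lambda+\eta_\lambda),\qquad\xi_\lie{m}\in\lie{m},\ \xi_\lie{a}\in\lie{a},\ \xi_\lambda\in\lie{g}_\lambda,\ \eta_\lambda\in\lie{g}_{-\lambda}.
\end{equation*}
Applying $\theta$ and comparing components in the direct sum $\lie{m}\oplus\lie{a}\oplus\bigoplus_{\lambda\in\Lambda^+(x)}(\lie{g}_\lambda\oplus\lie{g}_{-\lambda})$: the condition $\theta(\xi)=\xi$ amounts to $\xi_\lie{a}=0$ and $\eta_\lambda=\theta(\xi_\lambda)$ for all $\lambda$, giving~(2); the condition $\theta(\xi)=-\xi$ amounts to $\xi_\lie{m}=0$ and $\eta_\lambda=-\theta(\xi_\lambda)$, giving~(3).

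The computation is elementary, the only inputs being the eigenvalue behavior of $\ad(a)$ on the root spaces and the action of $\theta$ on them, so there is no serious obstacle. The single point that must be handled with a little care is the symmetry of $\Lambda(x)$ under $\lambda\mapsto-\lambda$: this is exactly what makes the $\theta$--eigenspace splitting compatible with the restricted root grading of $\lie{g}_a$, and hence what allows parts~(2) and~(3) to be read off component by component.
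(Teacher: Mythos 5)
Your proof is correct and follows essentially the same route as the paper: part (1) is the identical computation with the restricted root space decomposition, and parts (2)--(3) are deduced from (1) together with $\theta(\lie{g}_\lambda)=\lie{g}_{-\lambda}$, which is exactly what the paper does (you simply spell out the eigenspace bookkeeping that the paper leaves implicit).
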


\begin{proof}
In order to prove the first claim let $\xi=\xi_0+\sum_{\lambda\in\Lambda}
\xi_\lambda\in\lie{g}$ and calculate
\begin{equation*}
\bigl[\mu_\lie{p}(x),\xi\bigr]=\sum_{\lambda\in\Lambda}\lambda\bigl(
\mu_\lie{p}(x)\bigr)\xi_\lambda.
\end{equation*}
Hence, $\xi$ centralizes $\mu_\lie{p}(x)$ if and only if $\xi_\lambda=0$ for
all $\lambda\notin\Lambda(x)$.

The other two claims follow from $(1)$ together with the fact that
$\theta(\lie{g}_\lambda)=\lie{g}_{-\lambda}$ for all $\lambda\in\Lambda$.
\end{proof}

It remains to show that $\left\{\sum_{\lambda\in
\Lambda^+(x)}\bigl(\xi_\lambda+\theta(\xi_\lambda)\bigr);\ \xi_\lambda\in
\lie{g}_\lambda\right\}$ is contained in $\lie{k}_x$ because then
Lemma~\ref{Lem:Centralizer} implies that $\lie{m}\to\lie{k}_{\mu_\lie{p}(x)}
/\lie{k}_x$ is surjective which in turn proves
Proposition~\ref{Prop:TransitiveAction}.

\begin{lem}
We have $\left\{\sum_{\lambda\in\Lambda^+(x)} \bigl(\xi_\lambda+
\theta(\xi_\lambda)\bigr);\ \xi_\lambda\in\lie{g}_\lambda\right\}\subset
\lie{k}_x$.
\end{lem}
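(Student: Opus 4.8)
The plan is to use Lemma~\ref{Lem:TechnicalFact} together with the structure of $\lie{sl}_2$--triples given in Proposition~\ref{Prop:SLTriples}. The crucial point is that an element of the form $\xi_\lambda + \theta(\xi_\lambda)$ with $\lambda \in \Lambda^+(x)$ can be obtained as a bracket $[\eta, \zeta]$ with $\eta \in \lie{k}_{\mu_\lie{p}(x)}$ and $\zeta \in \lie{p}_{\mu_\lie{p}(x)}$. Indeed, for $\lambda \in \Lambda^+(x)$ and $\xi_\lambda \in \lie{g}_\lambda$, we have by Lemma~\ref{Lem:Centralizer} that both $\xi_\lambda + \theta(\xi_\lambda) \in \lie{k}_{\mu_\lie{p}(x)}$ and $\xi_\lambda - \theta(\xi_\lambda) \in \lie{p}_{\mu_\lie{p}(x)}$. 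Computing $[\xi_\lambda + \theta(\xi_\lambda), \xi_\lambda - \theta(\xi_\lambda)] = -2[\xi_\lambda, \theta(\xi_\lambda)]$, which lies in $\lie{a}_\lambda \subset \lie{a}$. This alone does not directly produce $\xi_\lambda + \theta(\xi_\lambda)$, so one needs instead to bracket an element of $\lie{a}$ (or of $\lie{a}_\lambda$, which is also in $\lie{p}_{\mu_\lie{p}(x)}$) with an appropriate element of $\lie{k}_{\mu_\lie{p}(x)}$.

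First I would fix $\lambda \in \Lambda^+(x)$ and $0 \neq \xi_\lambda \in \lie{g}_\lambda$, and set $H_\lambda := [\xi_\lambda, \theta(\xi_\lambda)] \in \lie{a}_\lambda$, which is nonzero by Proposition~\ref{Prop:SLTriples}, with $\lambda(H_\lambda) \neq 0$. Since $H_\lambda \in \lie{a} \subset \lie{p}_{\mu_\lie{p}(x)}$ and $\xi_\lambda + \theta(\xi_\lambda) \in \lie{k}_{\mu_\lie{p}(x)}$ (both by Lemma~\ref{Lem:Centralizer}, using $\lambda \in \Lambda(x)$), I would compute the bracket
\begin{equation*}
[\xi_\lambda + \theta(\xi_\lambda), H_\lambda] = -\lambda(H_\lambda)\,\xi_\lambda + \lambda(H_\lambda)\,\theta(\xi_\lambda) = \lambda(H_\lambda)\bigl(\theta(\xi_\lambda) - \xi_\lambda\bigr),
\end{equation*}
using that $[\xi_\lambda, H] = -\lambda(H)\xi_\lambda$ and $[\theta(\xi_\lambda), H] = \lambda(H)\theta(\xi_\lambda)$ for $H \in \lie{a}$. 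This shows $\xi_\lambda - \theta(\xi_\lambda) \in [\lie{k}_{\mu_\lie{p}(x)}, \lie{p}_{\mu_\lie{p}(x)}]$. Symmetrically, bracketing $\xi_\lambda - \theta(\xi_\lambda) \in \lie{p}_{\mu_\lie{p}(x)}$ with $H_\lambda \in \lie{p}_{\mu_\lie{p}(x)}$ is not allowed (both in $\lie{p}$), so instead I bracket $H_\lambda$ with an element of $\lie{k}_{\mu_\lie{p}(x)}$ again: we already have $\xi_\lambda + \theta(\xi_\lambda) \in \lie{k}_{\mu_\lie{p}(x)}$ directly. Now apply Lemma~\ref{Lem:TechnicalFact}: since $\xi_\lambda + \theta(\xi_\lambda) \in \lie{k}_{\mu_\lie{p}(x)}$ and $\xi_\lambda - \theta(\xi_\lambda) \in \lie{p}_{\mu_\lie{p}(x)}$, and also $\mu_\lie{p}(x) \in \lie{a}$, the lemma might not immediately apply to give $\xi_\lambda + \theta(\xi_\lambda) \in \lie{k}_x$. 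The cleaner route: having shown $\xi_\lambda - \theta(\xi_\lambda) \in [\lie{k}_{\mu_\lie{p}(x)}, \lie{p}_{\mu_\lie{p}(x)}] \subset \lie{p}_x$ by Lemma~\ref{Lem:TechnicalFact}, and noting $\xi_\lambda + \theta(\xi_\lambda) = \tfrac{-1}{\lambda(H_\lambda)}[\xi_\lambda - \theta(\xi_\lambda), H_\lambda]$ lies in $[\lie{p}_x, \lie{a}]$; but $\lie{a} \not\subset \lie{p}_x$ in general. So I would instead invoke the stronger chain already displayed after Proposition~\ref{Prop:TransitiveAction}: $\bigl[[\lie{k}_{\mu_\lie{p}(x)}, \lie{p}_{\mu_\lie{p}(x)}], [\lie{k}_{\mu_\lie{p}(x)}, \lie{p}_{\mu_\lie{p}(x)}]\bigr] \subset \lie{k}_x$.

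The cleanest execution is therefore: using the computation above, both $\xi_\lambda - \theta(\xi_\lambda)$ (just shown) and $\xi_\lambda + \theta(\xi_\lambda)$ lie in $[\lie{k}_{\mu_\lie{p}(x)}, \lie{p}_{\mu_\lie{p}(x)}]$ — for the latter, observe $\xi_\lambda + \theta(\xi_\lambda) \in \lie{k}_{\mu_\lie{p}(x)}$ and applying the first computation to $-\theta(\xi_\lambda) \in \lie{g}_{-\lambda}$ in place of $\xi_\lambda$ (with $-\lambda \in \Lambda^+(x)$... not necessarily positive). To sidestep sign bookkeeping I would observe that $\xi_\lambda + \theta(\xi_\lambda) = \tfrac{1}{\lambda(H_\lambda)^2}\bigl[[\xi_\lambda+\theta(\xi_\lambda), H_\lambda], [\xi_\lambda - \theta(\xi_\lambda), \text{something in } \lie{k}_{\mu_\lie{p}(x)}]\bigr]$ — but more simply, having $\xi_\lambda - \theta(\xi_\lambda) = c\,[\xi_\lambda+\theta(\xi_\lambda), H_\lambda] \in [\lie{k}_{\mu_\lie{p}(x)}, \lie{p}_{\mu_\lie{p}(x)}]$ and $\xi_\lambda + \theta(\xi_\lambda) = c'\,[\xi_\lambda - \theta(\xi_\lambda), H_\lambda]\in[\lie{p}_{\mu_\lie{p}(x)}, \lie{p}_{\mu_\lie{p}(x)}] \subset \lie{k}_{\mu_\lie{p}(x)}$... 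I realize the genuinely robust statement is: $\xi_\lambda + \theta(\xi_\lambda) \in [\lie{k}_{\mu_\lie{p}(x)}, \lie{p}_{\mu_\lie{p}(x)}]$ because $\xi_\lambda + \theta(\xi_\lambda) = \tfrac{1}{\lambda(H_\lambda)}\bigl[H_\lambda, \xi_\lambda - \theta(\xi_\lambda)\bigr]$ with $H_\lambda\in\lie{p}_{\mu_\lie{p}(x)}$, which is in $[\lie{p}, \lie{p}]$ not $[\lie{k},\lie{p}]$ — so this needs one more step. I expect \textbf{this bracket bookkeeping to be the main obstacle}: one must exhibit $\xi_\lambda + \theta(\xi_\lambda)$ as an iterated bracket landing inside $\bigl[[\lie{k}_{\mu_\lie{p}(x)},\lie{p}_{\mu_\lie{p}(x)}],[\lie{k}_{\mu_\lie{p}(x)},\lie{p}_{\mu_\lie{p}(x)}]\bigr]$. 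Since $\xi_\lambda - \theta(\xi_\lambda)\in[\lie{k}_{\mu_\lie{p}(x)},\lie{p}_{\mu_\lie{p}(x)}]$ and $H_\lambda = -[\xi_\lambda, \theta(\xi_\lambda)] = -\tfrac14[(\xi_\lambda - \theta(\xi_\lambda)) + (\xi_\lambda+\theta(\xi_\lambda)), (\xi_\lambda-\theta(\xi_\lambda)) - (\xi_\lambda+\theta(\xi_\lambda))]$, one can also write $H_\lambda \in \bigl[[\lie{k}_{\mu_\lie{p}(x)},\lie{p}_{\mu_\lie{p}(x)}], \lie{k}_{\mu_\lie{p}(x)}\bigr] + \dots$; then $\xi_\lambda + \theta(\xi_\lambda) = \tfrac{1}{\lambda(H_\lambda)}[H_\lambda, \xi_\lambda - \theta(\xi_\lambda)]$ expresses it as a bracket of two elements of $[\lie{k}_{\mu_\lie{p}(x)}, \lie{p}_{\mu_\lie{p}(x)}]$ (taking $H_\lambda = c[\xi_\lambda+\theta(\xi_\lambda), \xi_\lambda-\theta(\xi_\lambda)]$ with the first factor in $\lie{k}_{\mu_\lie{p}(x)}$, second in $\lie{p}_{\mu_\lie{p}(x)}$, both by Lemma~\ref{Lem:Centralizer}), hence $\xi_\lambda + \theta(\xi_\lambda) \in \bigl[[\lie{k}_{\mu_\lie{p}(x)}, \lie{p}_{\mu_\lie{p}(x)}], [\lie{k}_{\mu_\lie{p}(x)}, \lie{p}_{\mu_\lie{p}(x)}]\bigr] \subset [\lie{p}_x, \lie{p}_x] \subset \lie{k}_x$ by Lemma~\ref{Lem:TechnicalFact}. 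Finally, since a general element of the set in question is a sum $\sum_{\lambda \in \Lambda^+(x)}(\xi_\lambda + \theta(\xi_\lambda))$ and $\lie{k}_x$ is a linear subspace, the full inclusion follows, completing the proof of Proposition~\ref{Prop:TransitiveAction} as well.
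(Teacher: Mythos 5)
Your final argument is correct and is essentially the paper's proof: you show $\xi_\lambda-\theta(\xi_\lambda)$ and $H_\lambda=[\xi_\lambda,\theta(\xi_\lambda)]$ both lie in $[\lie{k}_{\mu_\lie{p}(x)},\lie{p}_{\mu_\lie{p}(x)}]$, write $\xi_\lambda+\theta(\xi_\lambda)=\tfrac{1}{\lambda(H_\lambda)}[H_\lambda,\xi_\lambda-\theta(\xi_\lambda)]$ using Proposition~\ref{Prop:SLTriples}, and conclude via the double-bracket inclusion $\bigl[[\lie{k}_{\mu_\lie{p}(x)},\lie{p}_{\mu_\lie{p}(x)}],[\lie{k}_{\mu_\lie{p}(x)},\lie{p}_{\mu_\lie{p}(x)}]\bigr]\subset[\lie{p}_x,\lie{p}_x]\subset\lie{k}_x$ from Lemma~\ref{Lem:TechnicalFact}, exactly as in the paper (which packages the first step as the subspace $\lie{p}^x$). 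The many false starts should be pruned, but the argument you settle on is the right one.
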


\begin{proof}
We will prove this lemma in three steps.

In the first step we prove
\begin{equation*}
\lie{p}^x:=\bigoplus_{\lambda\in\Lambda(x)}\lie{a}_\lambda\oplus
\left\{\sum_{\lambda\in\Lambda^+(x)}\bigl(\xi_\lambda-\theta(\xi_\lambda)\bigr)
;\ \xi_\lambda\in\lie{g}_\lambda\right\}\subset[\lie{k}_{\mu_\lie{p}(x)},
\lie{p}_{\mu_\lie{p}(x)}].
\end{equation*}

Let $\lambda\in\Lambda^+(x)$ and $\xi_\lambda\in\lie{g}_\lambda$. Then we have
$\xi_\lambda+\theta(\xi_\lambda)\in\lie{k}_{\mu_\lie{p}(x)}$, and we may choose
an element $\eta\in\lie{a}$ with $\lambda(\eta)\not=0$. Because of
\begin{equation*}
\xi_\lambda-\theta(\xi_\lambda)=-\frac{1}{\lambda(\eta)}\bigl[\xi_\lambda
+\theta(\xi_\lambda),\eta\bigr]\in[\lie{k}_{\mu_\lie{p}(x)},
\lie{p}_{\mu_\lie{p}(x)}]
\end{equation*}
we obtain $\left\{\sum_{\lambda\in\Lambda^+(x)}\bigl(\xi_\lambda-
\theta(\xi_\lambda)\bigr);\ \xi_\lambda\in\lie{g}_\lambda\right\}
\subset[\lie{k}_{\mu_\lie{p}(x)},\lie{p}_{\mu_\lie{p}(x)}]$.

Moreover,
\begin{equation*}
\bigl[\xi_\lambda,\theta(\xi_\lambda)\bigr]=-\frac{1}{2}
\bigl[\xi_\lambda+\theta(\xi_\lambda),\xi_\lambda-\theta(\xi_\lambda)\bigr]
\in[\lie{k}_{\mu_\lie{p}(x)},\lie{p}_{\mu_\lie{p}(x)}]
\end{equation*}
implies
$\lie{a}_\lambda\subset[\lie{k}_{\mu_\lie{p}(x)},\lie{p}_{\mu_\lie{p}(x)}]$.

The second step consists in showing
\begin{equation*}
\left\{\sum_{\lambda\in\Lambda^+(x)}\bigl(\xi_\lambda+\theta(\xi_\lambda)
\bigr);\ \xi_\lambda\in\lie{g}_\lambda\right\}\subset[\lie{p}^x,\lie{p}^x].
\end{equation*}

To see this, let $\lambda\in\Lambda^+(x)$ and
$0\not=\xi_\lambda\in\lie{g}_\lambda$ be arbitrary. Then we have
$\xi_\lambda-\theta(\xi_\lambda)\in\lie{p}^x$ and
$\bigl[\xi_\lambda,\theta(\xi_\lambda)\bigr]\in\lie{a}_\lambda$. Moreover,
Proposition~\ref{Prop:SLTriples} implies
$\lambda\bigl[\xi_\lambda,\theta(\xi_\lambda)\bigr]\not=0$, which gives
\begin{equation*}
\xi_\lambda+\theta(\xi_\lambda)=
\frac{1}{\lambda\bigl[\xi_\lambda,\theta(\xi_\lambda)\bigr]}
\Bigl[\bigl[\xi_\lambda,\theta(\xi_\lambda)\bigr],
\xi_\lambda-\theta(\xi_\lambda)\Bigr]\in[\lie{p}^x,\lie{p}^x].
\end{equation*}

In the last step we combine the results obtained so far with
Lemma~\ref{Lem:TechnicalFact} and arrive at
\begin{equation*}
\left\{\sum_{\lambda\in\Lambda^+(x)}\bigl(\xi_\lambda+\theta(\xi_\lambda)
\bigr);\ \xi_\lambda\in\lie{g}_\lambda\right\}\subset
[\lie{p}^x,\lie{p}^x]\subset\bigl[[\lie{k}_{\mu_\lie{p}(x)},
\lie{p}_{\mu_\lie{p}(x)}],[\lie{k}_{\mu_\lie{p}(x)},
\lie{p}_{\mu_\lie{p}(x)}]\bigr]\subset\lie{k}_x,
\end{equation*}
which was to be shown.
\end{proof}

Hence, the proof of Proposition~\ref{Prop:TransitiveAction} is finished.

\subsection{An equivalent condition of the separation property}

Proposition~\ref{Prop:TransitiveAction} allows us to formulate an equivalent
condition for $\mu_\lie{p}$ to separate locally almost the $K$--orbits which
generalizes the notion of $K$--spherical symplectic manifolds defined
in~\cite{HuWu}.

\begin{prop}\label{Prop:EquCondition}
The gradient map $\mu_\lie{p}$ locally almost separates the $K$--orbits if and
only if $\dim(\lie{p}\cdot x)^\perp=\dim M-\dim M_x$ for one (and then
every) $x\in X_{\gen}\cap\mu_\lie{p}^{-1}(\lie{a})$.
\end{prop}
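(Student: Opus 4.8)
The plan is to translate the ``locally almost separates'' condition into an infinitesimal statement about a single generic fiber and then feed this through Proposition~\ref{Prop:TransitiveAction}. Recall that by the equivalent formulations of the separation property, $\mu_\lie{p}$ locally almost separates the $K$--orbits precisely when there is a $K$--invariant open set $\Omega$ with $K_{\mu_\lie{p}(x)}\cdot x$ open in $\mu_\lie{p}^{-1}\bigl(\mu_\lie{p}(x)\bigr)$ for all $x\in\Omega$. Since both $X_\gen$ and the conditions in question are $K$--invariant and open/analytic, it suffices to check this on $X_\gen$; and by equivariance every $K$--orbit meets $\mu_\lie{p}^{-1}(\lie{a})$, so we may test at a point $x\in X_\gen\cap\mu_\lie{p}^{-1}(\lie{a})$. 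The first step is therefore to argue — using the constancy of ranks and orbit dimensions on $X_\gen$ together with the analyticity arguments already used for $X_\gen$ — that the property ``$K_{\mu_\lie{p}(x)}\cdot x$ is open in $\mu_\lie{p}^{-1}(\mu_\lie{p}(x))$'' either holds for all generic $x$ or fails for all generic $x$; this is what lets us say ``one (and then every)''.

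Next I would make the dimension count explicit. By Lemma~5.1 of~\cite{HeSchw} (already invoked in the proof of Lemma~\ref{Lem:OpenGOrbits}) the tangent space to the fiber satisfies $T_x\mu_\lie{p}^{-1}\bigl(\mu_\lie{p}(x)\bigr)=(\lie{p}\cdot x)^\perp$, so $\dim\mu_\lie{p}^{-1}\bigl(\mu_\lie{p}(x)\bigr)=\dim(\lie{p}\cdot x)^\perp$ at a generic point. The orbit $K_{\mu_\lie{p}(x)}\cdot x$ is open in this fiber if and only if $\dim K_{\mu_\lie{p}(x)}\cdot x=\dim(\lie{p}\cdot x)^\perp$, i.e.\ $\dim K_{\mu_\lie{p}(x)}-\dim K_x=\dim(\lie{p}\cdot x)^\perp$. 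So the separation property at $x$ is equivalent to the numerical identity $\dim(\lie{p}\cdot x)^\perp=\dim K_{\mu_\lie{p}(x)}-\dim K_x$.

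The key point is then to replace $K_{\mu_\lie{p}(x)}$ and $K_x$ by $M$ and $M_x$ on the right-hand side. Here Proposition~\ref{Prop:TransitiveAction} does the work: for $x\in X_\gen\cap\mu_\lie{p}^{-1}(\lie{a})$ the orbit $M\cdot x$ is open in $\mu_\lie{p}^{-1}\bigl(\mu_\lie{p}(x)\bigr)\cap(K\cdot x)$, hence $\dim M\cdot x=\dim\bigl(\mu_\lie{p}^{-1}(\mu_\lie{p}(x))\cap K\cdot x\bigr)$, which equals $\dim K_{\mu_\lie{p}(x)}\cdot x$ since the latter intersection is exactly $K_{\mu_\lie{p}(x)}\cdot x$ (as $M\subset K_{\mu_\lie{p}(x)}$ and $K_{\mu_\lie{p}(x)}\cdot x$ is the fiber of $\mu_\lie{p}|_{K\cdot x}$ over $\mu_\lie{p}(x)$). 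Therefore $\dim K_{\mu_\lie{p}(x)}-\dim K_x=\dim M\cdot x=\dim M-\dim M_x$ unconditionally. Substituting this into the identity from the previous paragraph, ``$K_{\mu_\lie{p}(x)}\cdot x$ open in the fiber'' becomes precisely $\dim(\lie{p}\cdot x)^\perp=\dim M-\dim M_x$, which is the assertion.

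The main obstacle I anticipate is the ``one (and then every)'' clause: I must be careful that the equality $\dim(\lie{p}\cdot x)^\perp=\dim M-\dim M_x$ is genuinely constant on $X_\gen\cap\mu_\lie{p}^{-1}(\lie{a})$. The left-hand side is constant on $X_\gen$ because $\rk_x\mu_\lie{p}$ is maximal and constant there; for the right-hand side, $\dim M$ is fixed, and $\dim M_x$ — equivalently $\dim M\cdot x=\dim K_{\mu_\lie{p}(x)}\cdot x=\dim K\cdot x-\dim K\cdot\mu_\lie{p}(x)$ by the fiber-bundle structure of $\mu_\lie{p}|_{K\cdot x}$ — is also constant on $X_\gen$ since both $\dim K\cdot x$ and $\dim K\cdot\mu_\lie{p}(x)$ are maximal and constant there by definition of $X_\gen$. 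Hence the identity holds at one generic point in $\mu_\lie{p}^{-1}(\lie{a})$ if and only if it holds at all of them, and by $K$--equivariance (conjugating $\lie{a}$) the choice of maximal Abelian subspace is immaterial. It then remains to note that once the identity holds on the dense open $K$--invariant set $X_\gen$ we may take $\Omega=X_\gen$ to conclude that $\mu_\lie{p}$ locally almost separates the $K$--orbits, closing the equivalence.
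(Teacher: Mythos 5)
Your proof is correct and follows essentially the same route as the paper's: it uses Lemma~5.1 of~\cite{HeSchw} to identify the generic fiber dimension with $\dim(\lie{p}\cdot x)^\perp$ and then invokes Proposition~\ref{Prop:TransitiveAction} to replace $\dim K_{\mu_\lie{p}(x)}-\dim K_x$ by $\dim M-\dim M_x$. The only difference is that you spell out the constancy of both sides of the dimension identity on $X_{\gen}$ more explicitly, thereby justifying the ``one (and then every)'' clause, which the paper leaves implicit.
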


\begin{proof}
Let us suppose first that $\mu_\lie{p}$ locally almost separates the
$K$--orbits. By definition, this means that there is an open $K$--invariant
subset $\Omega\subset X$ such that $\mu_\lie{p}^{-1}\bigl(
\mu_\lie{p}(x)\bigr)^0=K^0_{\mu_\lie{p}(x)}\cdot x$ for all $x\in\Omega$.

Since $X_{\gen}$ is dense, we find an element $x\in\Omega\cap X_{\gen}\cap
\mu_\lie{p}^{-1}(\lie{a})$. It follows from maximality of $\rk_x\mu_\lie{p}$
that $\mu_\lie{p}^{-1}\bigl(\mu_\lie{p}(x)\bigr)\cap X_{\gen}$ is a closed
submanifold of $X_{\gen}$. By Lemma~5.1 in \cite{HeSchw}, we obtain $\dim
\ker(\mu_\lie{p})_{*,x} =\dim(\lie{p}\cdot x)^\perp$. Hence, we conclude $\dim
K_{\mu_\lie{p}(x)}/K_x=\dim(\lie{p}\cdot x)^\perp$. Since by
Proposition~\ref{Prop:TransitiveAction} the orbit $M\cdot x$ is open in
$K_{\mu_\lie{p}(x)}\cdot x$, we finally obtain $\dim(\lie{p}\cdot x)^\perp=\dim
M/M_x=\dim M-\dim M_x$ which was to be shown.

In order to prove the converse let $x\in X_{\gen}\cap\mu_\lie{p}^{-1}(\lie{a})$
be given. Our assumption implies that $\mu_\lie{p}^{-1}\bigl(\mu_\lie{p}(x)
\bigr)$ is a closed submanifold of $X$ of dimension $\dim(\lie{p}\cdot x)^\perp
=\dim M-\dim M_x$. We conclude that $M\cdot x$ and hence $K_{\mu_\lie{p}(x)}
\cdot x$ are open in $\mu_\lie{p}^{-1}\bigl(\mu_\lie{p}(x)\bigr)$. Therefore we
have $\mu_\lie{p}^{-1}\bigl(\mu_\lie{p}(x)\bigr)^0=K^0_{\mu_\lie{p}(x)}\cdot
x$, which means that $\mu_\lie{p}$ separates the $K$--orbits in $X_{\gen}$.
\end{proof}

Let us note explicitly the following corollary of the proof of
Proposition~\ref{Prop:EquCondition}.

\begin{cor}
If $\mu_\lie{p}$ locally almost separates the $K$--orbits in $X$, then it almost
separates the $K$--orbits in the dense open set $X_{\gen}$.
\end{cor}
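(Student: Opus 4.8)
The plan is to read the statement off directly from the proof of Proposition~\ref{Prop:EquCondition}, both of whose implications have already been established. Assume that $\mu_\lie{p}$ locally almost separates the $K$--orbits. The forward implication of Proposition~\ref{Prop:EquCondition}, together with the \og for one and hence every\fg\ clause, gives $\dim(\lie{p}\cdot x)^\perp=\dim M-\dim M_x$ for \emph{every} $x\in X_{\gen}\cap\mu_\lie{p}^{-1}(\lie{a})$. This is precisely the hypothesis under which the converse implication is proved, and its proof yields $\mu_\lie{p}^{-1}\bigl(\mu_\lie{p}(x)\bigr)^0=K^0_{\mu_\lie{p}(x)}\cdot x$ for each such $x$.

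Next I would propagate this equality from $\mu_\lie{p}^{-1}(\lie{a})$ to all of $X_{\gen}$ using $K$--equivariance of $\mu_\lie{p}$ and the fact, recorded earlier, that every $K$--orbit meets $\mu_\lie{p}^{-1}(\lie{a})$. Given $y\in X_{\gen}$, choose $k\in K$ with $k\cdot y\in\mu_\lie{p}^{-1}(\lie{a})$; since $X_{\gen}$ is $K$--invariant we have $k\cdot y\in X_{\gen}\cap\mu_\lie{p}^{-1}(\lie{a})$, and applying $k^{-1}$ to the equality $\mu_\lie{p}^{-1}\bigl(\mu_\lie{p}(k\cdot y)\bigr)^0=K^0_{\mu_\lie{p}(k\cdot y)}\cdot(k\cdot y)$ transports it to $y$. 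Thus $\mu_\lie{p}$ separates the $K$--orbits at every point of $X_{\gen}$.

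It remains to observe that $X_{\gen}$ is open, dense and $K$--invariant in $X$, and that the property just obtained is exactly condition~(2) of the definition of \og locally almost separates the $K$--orbits\fg\ for the choice $\Omega=X_{\gen}$; hence $\mu_\lie{p}$ almost separates the $K$--orbits on the dense open set $X_{\gen}$. There is no serious obstacle here: the corollary is a bookkeeping consequence of Proposition~\ref{Prop:EquCondition}, the only points needing a word of care being the equivariance reduction to $\mu_\lie{p}^{-1}(\lie{a})$ and the identification of \og separates the $K$--orbits in $X_{\gen}$\fg\ with the definition of \og almost separates\fg\ applied to $\Omega=X_{\gen}$.
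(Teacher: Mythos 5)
Your argument is correct and is essentially the paper's own: the corollary is stated there without a separate proof, precisely as the combination of the two implications established in the proof of Proposition~\ref{Prop:EquCondition} (the forward direction yields the dimension condition at every point of $X_{\gen}\cap\mu_\lie{p}^{-1}(\lie{a})$, the converse's argument then gives openness of $K_{\mu_\lie{p}(x)}\cdot x$ in the fiber there, and $K$--equivariance spreads this over all of $X_{\gen}$). Your explicit treatment of the equivariance step and of the identification with condition~(2) of the definition for $\Omega=X_{\gen}$ only makes precise what the paper leaves implicit.
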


Consequently, if $\mu_\lie{p}$ locally almost separates the $K$--orbits in $X$,
then $\mu_\lie{p}$ induces a map $X_{\gen}/K\to\lie{p}/K\cong\lie{a}/W$ whose
fibers are discrete.

\section{Proof of the main theorem}\label{section:proof}

In the first subsection we review the shifting technique for gradient maps
which translates the problem of finding an open $Q_0$--orbit in $X$ into the
problem of finding an open $G$--orbit in the bigger gradient manifold
$X\times(K/M)$. Since $G$ is real-reductive, we may apply the techniques
developed in~\cite{HeSchw} to solve the second problem.

Afterwards, it remains to find an open $G$--orbit in $X\times(K/M)$ under the
assumption that $\mu_\lie{p}$ locally almost separates the $K$--orbits. This is
done in two steps: First we construct a special gradient map $\wt{\mu}_\lie{p}$
on $X\times(K/M)$ for which the set of global minima of
$\norm{\wt{\mu}_\lie{p}}^2$ can be controlled. This will then be essentially
used in the proof of existence of an open $Q_0$--orbit.

In the final subsection we prove the remaining implication
$(3)\Longrightarrow(2)$ in our main theorem: If the minimal parabolic subgroup
$Q_0$ has an open orbit in $X$, then $\mu_\lie{p}$ almost separates the
$K$--orbits.

\subsection{The shifting technique}

Since the minimal parabolic subgroup $Q_0=MAN$ is not compatible, we cannot
apply the theory developed in~\cite{HeSchw} in order to link the action of
$Q_0$ on $X$ with the theory of gradient maps. Therefore, we reformulate the
problem of finding an open $Q_0$--orbit in $X$ as the problem of finding an
open $G$--orbit in a larger manifold.

\begin{lem}\label{Lem:Reformulation}
Let $Q$ be a parabolic subgroup of $G$. Then $Q$ has an open orbit in $X$ if
and only if $G$ has an open orbit in $X\times (G/Q)$ with respect to the
diagonal action.
\end{lem}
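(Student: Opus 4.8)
The plan is to unwind both sides using the diagonal action on $X \times (G/Q)$ and the basepoint $eQ \in G/Q$. Write $o := eQ$ for the basepoint, so that $G_o = Q$. The key observation is that for any point $(x, g o) \in X \times (G/Q)$, after translating by $g^{-1}$ in the second factor we may assume $g = e$, so it suffices to analyze orbits through points of the form $(x, o)$; and the $G$--orbit through $(x,o)$ is open in $X \times (G/Q)$ if and only if its tangent space $\lie{g}\cdot(x,o)$ is all of $T_xX \oplus T_o(G/Q)$.

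First I would prove the ``if'' direction. Suppose $G\cdot(x_0, o)$ is open in $X \times (G/Q)$ for some $x_0 \in X$. Consider the projection $\pr_2 \colon X \times (G/Q) \to G/Q$ onto the second factor; it is $G$--equivariant, so it maps the open $G$--orbit $G\cdot(x_0,o)$ onto the (necessarily unique, since $G/Q$ is $G$--homogeneous) orbit $G\cdot o = G/Q$. The fibre of $\pr_2$ over $o$ is $X \times \{o\}$, and the stabilizer $G_o = Q$ acts on this fibre via its action on the first factor $X$. A standard fact about homogeneous fibrations (or a direct tangent-space computation: $\lie{g}\cdot(x_0,o) = T_{x_0}X \oplus \lie{g}\cdot o$ forces $\lie{q}\cdot x_0 + (\lie{g}\cdot o \cap (T_{x_0}X\oplus 0)) = T_{x_0}X$, and the second summand is $0$ because $\pr_2$ restricted to the $G$--orbit is a submersion onto $G/Q$ whose fibre through $(x_0,o)$ is exactly the $Q$--orbit $Q\cdot x_0 \times\{o\}$) shows that $Q\cdot x_0$ is open in $X$. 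Thus $Q$ has an open orbit in $X$.

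Next I would prove the ``only if'' direction. Suppose $Q\cdot x_0$ is open in $X$, i.e.\ $\lie{q}\cdot x_0 = T_{x_0}X$. I claim $G\cdot(x_0,o)$ is open in $X\times(G/Q)$, equivalently $\lie{g}\cdot(x_0,o) = T_{x_0}X \oplus T_o(G/Q)$. Compute: for $\xi \in \lie{g}$, the vector $\xi\cdot(x_0,o) = (\xi_X(x_0), \bar\xi)$ where $\bar\xi$ is the image of $\xi$ under $\lie{g} \to \lie{g}/\lie{q} \cong T_o(G/Q)$. Taking $\xi$ to range over $\lie{q}$ gives all of $\lie{q}\cdot x_0 = T_{x_0}X$ in the first factor while the second factor stays $0$; taking $\xi$ to range over a complement of $\lie{q}$ in $\lie{g}$ surjects onto $T_o(G/Q)$ in the second factor. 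Hence $\lie{g}\cdot(x_0,o) = T_{x_0}X \oplus T_o(G/Q)$ as required, and $G\cdot(x_0,o)$ is open.

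The main obstacle is purely bookkeeping: making precise the claim in the ``if'' direction that the fibre of the $G$--orbit over $o$ through $(x_0,o)$ equals $Q\cdot x_0 \times\{o\}$, which amounts to the elementary but slightly fussy observation that if $g\cdot(x_0,o)$ lies in the fibre $X\times\{o\}$ then $g \in Q = G_o$, so $g\cdot(x_0,o) = (g\cdot x_0, o)$ with $g\cdot x_0 \in Q\cdot x_0$. Once this is in hand both directions are a one-line dimension count on tangent spaces; no analyticity or gradient-map input is needed, only that $G/Q$ is a homogeneous space and the diagonal action is the obvious one. Alternatively one can phrase the whole argument in the language of the second projection being a $G$--equivariant fibre bundle with fibre $X$ and structure group $Q$, and invoke that openness of a $G$--orbit in the total space is equivalent to openness of the corresponding $Q$--orbit in the fibre; I would present whichever version is shorter.
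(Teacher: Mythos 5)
Your proposal is correct. You argue directly on $X\times(G/Q)$: every $G$--orbit meets the slice $X\times\{eQ\}$, and the intersection of $G\cdot(x_0,eQ)$ with that slice is exactly $Q\cdot x_0\times\{eQ\}$ (since $g\cdot(x_0,eQ)\in X\times\{eQ\}$ forces $g\in Q$); openness of the $G$--orbit is then equivalent to openness of $Q\cdot x_0$ in $X$, which you check by a tangent-space computation. The paper instead introduces the twisted product $G\times_Q X$ and the $G$--equivariant diffeomorphism $G\times_Q X\to X\times(G/Q)$, $[g,x]\mapsto(g\cdot x,gQ)$, to get a homeomorphism $X/Q\cong (X\times(G/Q))/G$. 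The underlying observation — that restricting the diagonal $G$--action on $X\times(G/Q)$ to the fibre over $eQ$ recovers the $Q$--action on $X$ — is the same in both; the paper's route is slightly more conceptual and yields the stronger statement that \emph{all} $Q$--orbits in $X$ correspond bijectively to $G$--orbits in $X\times(G/Q)$ (with matching topologies on the orbit spaces), not just the open ones, whereas your version proves precisely the openness equivalence needed here. Your parenthetical tangent-space justification in the ``if'' direction is more convoluted than necessary — the clean statement is simply that an open set intersected with the submanifold $X\times\{eQ\}$ is open in that submanifold — but the argument is sound.
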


\begin{proof}
Recall that the twisted product $G\times_{Q}X$ is by definition the quotient
space of $G\times X$ by the $Q$--action $q\cdot(g,x):=(gq^{-1},q\cdot x)$. We
denote the element $Q\cdot(g,x)\in G\times_QX$ by $[g,x]$. Then $G$ acts on
$G\times_QX$ by $g\cdot[h,x]:=[gh,x]$, and every $G$--orbit in $G\times_{Q}
X$ intersects $X\cong\bigl\{[e,x];\ x\in X\bigr\}$ in a $Q$--orbit. Thus, the
inclusion $X\hookrightarrow G\times_{Q}X$, $x\mapsto[e,x]$, induces a
homeomorphism $X/Q\cong (G\times_{Q}X)/G$. In particular, $Q$ has an open
orbit in $X$ if and only if $G$ has an open orbit in $G\times_{Q}X$.

The claim follows now from the fact that the map $G\times_{Q}X\to X
\times (G/Q)$, $[g,x]\mapsto(g\cdot x,gQ)$, is a $G$--equivariant
diffeomorphism with respect to the diagonal $G$--action on $X\times(G/Q)$. To
see this, it is sufficient to note that its inverse map is given by
$(x,gQ)\mapsto[g,g^{-1} \cdot x]$.
\end{proof}

The gradient map $\mu_\lie{p}$ on $X$ induces in a natural way a gradient map
on the product $\wt{X}:=X\times(G/Q)$ as follows. First recall
from Section~\ref{subsection:coadjoint} that $G/Q$ is a $G$--invariant
closed submanifold of an adjoint $U$--orbit of an element $\gamma\in\lie{p}$.
In particular $G/Q$ is isomorphic to $K/K_\gamma$ and is equipped with a
gradient map $kK_\gamma\mapsto-\Ad(k)\xi$. The gradient maps on $X$ and on
$K/K_\gamma$ induce a gradient map $\wt{\mu}_\lie{p}$ on $\wt{X}$, which is
given by the sum of those two gradient maps. Explicitly, we have
\begin{equation*}
\wt\mu_\lie{p}(x,kK_\gamma)=\mu_\lie{p}(x)-\Ad(k)\gamma.
\end{equation*}
Note that the choice of $\gamma\in\lie{p}$ depends only on the isotropy
$K_\gamma$. In particular, if $Q$ is a minimal parabolic subgroup of $G$, or
equivalently if $K_\gamma$ equals the centralizer $M$ of $\lie{a}$ in $K$, then
for every regular $\gamma\in\lie{p}$, the assignment $(x,kM)\mapsto
\mu_\lie{p}(x)-\Ad(k)\gamma$ defines a gradient map on $\wt{X}$.

\subsection{The shifted gradient map}

Our goal is to construct a gradient map on $\wt{X}=X\times(K/M)$ which enables
us to control the minima of the associated function $\|\wt\mu_\lie{p}\|^2$.

Let $\lie{a}_+$ denote the closed Weyl chamber in $\lie{a}$ associated to our
choice of positive restricted roots. We generalize an inequality in
\cite{HSchuetz} which is a consequence of Kostant's Convexity Theorem
(\cite{Kos}).

\begin{lem}\label{lem:kostant}
Let $\gamma,\xi\in\lie{a}_+$ and assume that $\xi$ is regular.
Then
\begin{equation*}
\norm{\Ad(k)\gamma-\xi}\geq\norm{\gamma-\xi}
\end{equation*}
for all $k\in K$. The inequality is strict for all $k\notin K_\gamma$.
\end{lem}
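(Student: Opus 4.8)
The plan is to deduce Lemma~\ref{lem:kostant} from Kostant's Convexity Theorem, following the pattern of \cite{HSchuetz}. Recall that the theorem asserts that for $\gamma\in\lie{a}_+$ the image of the adjoint orbit $\Ad(K)\gamma$ under the orthogonal projection $\pi\colon\lie{p}\to\lie{a}$ is exactly the convex hull $\mathrm{conv}(W\cdot\gamma)$ of the Weyl group orbit of $\gamma$. Since $\xi\in\lie{a}$ and the decomposition $\lie{p}=\lie{a}\oplus\lie{a}^\perp$ is orthogonal, for every $k\in K$ we have
\begin{equation*}
\norm{\Ad(k)\gamma-\xi}^2=\norm{\pi(\Ad(k)\gamma)-\xi}^2+\norm{(1-\pi)(\Ad(k)\gamma)}^2\geq\norm{\pi(\Ad(k)\gamma)-\xi}^2.
\end{equation*}
So it suffices to bound $\norm{v-\xi}$ from below by $\norm{\gamma-\xi}$ for every $v$ in the polytope $P:=\mathrm{conv}(W\cdot\gamma)$.

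\textbf{Key step.} This reduces to a statement in Euclidean geometry inside $\lie{a}$: the point of the polytope $P$ closest to $\xi\in\lie{a}_+$ is $\gamma$ itself. Equivalently, I must show $\langle\xi-\gamma,v-\gamma\rangle\leq0$ for all $v\in P$, which by convexity need only be checked at the vertices $v=w\gamma$, $w\in W$, i.e.
\begin{equation*}
\langle\xi-\gamma,\,w\gamma-\gamma\rangle\leq0\qquad\text{for all }w\in W.
\end{equation*}
Now I invoke the standard fact about Weyl chambers (see Chapter~VII in \cite{Kn}): for $\gamma,\xi\in\lie{a}_+$ one has $\langle\xi,\gamma-w\gamma\rangle\geq0$ for every $w\in W$, since $\gamma-w\gamma$ is a nonnegative combination of positive (co)roots and $\xi$ lies in the closed dominant chamber. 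Combining with $\langle\gamma,\gamma-w\gamma\rangle=\tfrac12(\norm{\gamma}^2+\norm{w\gamma}^2)-\langle\gamma,w\gamma\rangle\ge\tfrac12\norm{\gamma-w\gamma}^2\geq0$ (using $\norm{w\gamma}=\norm{\gamma}$), we get $\langle\xi-\gamma,\,w\gamma-\gamma\rangle=\langle\xi,w\gamma-\gamma\rangle+\langle\gamma,\gamma-w\gamma\rangle\le 0$ only after being slightly more careful; in fact the clean inequality is $\langle\gamma-\xi,\gamma-w\gamma\rangle\ge 0$, which follows because both $\langle\gamma,\gamma-w\gamma\rangle\ge0$ and $\langle\xi,w\gamma-\gamma\rangle\ge0$. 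This gives $\norm{v-\xi}\geq\norm{\gamma-\xi}$ for $v\in P$ and hence the asserted inequality.

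\textbf{The strict inequality.} Suppose $k\notin K_\gamma$. If $\pi(\Ad(k)\gamma)\neq\gamma$, then since $\gamma$ is the unique nearest point of the convex set $P$ to the point $\xi$ — uniqueness holds because $\xi$ is regular, so $\xi$ lies in the open dominant chamber and the supporting-hyperplane inequalities above are strict at every vertex $w\gamma\ne\gamma$ — we get strict inequality already after projecting. If instead $\pi(\Ad(k)\gamma)=\gamma$ but $\Ad(k)\gamma\neq\gamma$, then $(1-\pi)(\Ad(k)\gamma)\neq0$, so the Pythagorean inequality above is strict. Either way $\norm{\Ad(k)\gamma-\xi}>\norm{\gamma-\xi}$. \textbf{The main obstacle} I anticipate is pinning down exactly where regularity of $\xi$ is used: it is precisely what upgrades ``$\gamma$ is \emph{a} nearest point of $P$ to $\xi$'' to ``$\gamma$ is \emph{the} nearest point,'' via strictness of $\langle\xi-\gamma,w\gamma-\gamma\rangle<0$ for $w\gamma\ne\gamma$ — and I should double-check that $\pi(\Ad(k)\gamma)=\gamma$ together with $\Ad(k)\gamma\in\Ad(K)\gamma$ does not already force $\Ad(k)\gamma=\gamma$, so that the second case is genuinely needed (it is, in general, since fibers of $\pi$ over extreme points can still be positive-dimensional).
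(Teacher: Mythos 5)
There is a genuine gap in your key step. After the Pythagorean inequality $\norm{\Ad(k)\gamma-\xi}^2\geq\norm{\pi(\Ad(k)\gamma)-\xi}^2$ you have discarded the normal component of $\Ad(k)\gamma$, and you are then committed to proving that $\gamma$ is the nearest point of the polytope $P=\mathrm{conv}(W\cdot\gamma)$ to $\xi$. That statement is false. Take the restricted root system $A_1\times A_1$ in $\lie{a}=\mbb{R}^2$ (Weyl group acting by independent sign changes), $\gamma=(1,0)$ and $\xi=(\varepsilon,1)$ with $0<\varepsilon<1$ regular dominant: then $P=[-1,1]\times\{0\}$, its nearest point to $\xi$ is $(\varepsilon,0)$ at distance $1$, while $\norm{\gamma-\xi}=\sqrt{(1-\varepsilon)^2+1}>1$. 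Your vertex computation shows the symptom: you first (correctly) cite $\langle\xi,\gamma-w\gamma\rangle\geq0$ and then, in the ``clean inequality,'' use its negation $\langle\xi,w\gamma-\gamma\rangle\geq0$. In fact the two terms $\langle\gamma,\gamma-w\gamma\rangle\geq0$ and $\langle\xi,w\gamma-\gamma\rangle\leq0$ have opposite signs, and the inequality $\langle\xi-\gamma,w\gamma-\gamma\rangle\leq0$ you need fails in the example above, where it equals $2(1-\varepsilon)>0$ for the reflection sending $(1,0)$ to $(-1,0)$.

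The information you must not throw away is $\norm{\Ad(k)\gamma}=\norm{\gamma}$. The paper's proof expands $\norm{\Ad(k)\gamma-\xi}^2-\norm{\gamma-\xi}^2=-2\langle\Ad(k)\gamma-\gamma,\xi\rangle$, so the problem becomes maximizing the \emph{linear} functional $\langle\cdot,\xi\rangle$ over $P$, not minimizing the distance to $\xi$; Kostant's theorem reduces this to the vertices, where $\langle w\gamma-\gamma,\xi\rangle\leq0$ holds because $\gamma-w\gamma$ is a nonnegative combination of simple restricted roots, with strict inequality (by regularity of $\xi$) unless $w\in W_\gamma$. For strictness one also needs that $\pi(\Ad(k)\gamma)=\gamma$ forces $k\in K_\gamma$; contrary to your closing parenthetical, the fiber of $\pi$ over $\gamma$ meets the orbit only in $\gamma$ itself, since $\pi(\Ad(k)\gamma)=\gamma$ together with $\norm{\Ad(k)\gamma}=\norm{\gamma}$ forces the normal component to vanish. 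So your second case is vacuous, and the first case rests on a false claim: the argument does not go through as written.
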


\begin{proof}
The $K$-invariance of the inner product implies
\begin{equation*}
\norm{\Ad(k)\gamma-\xi}^2-\norm{\gamma-\xi}^2=-2\cdot\bigl\langle\Ad(k)\gamma-
\gamma,\xi\bigr\rangle.
\end{equation*}
Let $\pi_\lie{a}$ denote the orthogonal projection of $\lie{p}$ onto $\lie{a}$.
Then $\bigl\langle\Ad(k)\gamma,\xi\bigr\rangle=
\bigl\langle\pi_\lie{a}(\Ad(k)\gamma),\xi\bigr\rangle$ and
$\pi_\lie{a}\bigl(\Ad(k)\gamma\bigr)$ is contained in the convex hull of the
orbit of the Weyl group $W:=\mathcal{N}_K(\lie{a})/\mathcal{Z}_K(\lie{a})$
through $\xi$ (\cite{Kos}). Since $K$ acts by unitary operators, we have
$\pi_\lie{a}\bigl(\Ad(k)\gamma\bigr)=\gamma$ if and only if $k\in K_\gamma$.
Therefore it suffices to show that
$\bigl\langle\Ad(w)\gamma-\gamma,\xi\bigr\rangle<0$ for all $w\in W$,
$w\notin W_\gamma$.

Let $\lambda$ be a simple restricted root and $\sigma_\lambda$ the
corresponding reflection. Then either $\sigma_\lambda(\gamma)=\gamma$ or
$\sigma_\lambda(\gamma)-\gamma=c\cdot\lambda$ for some $c<0$. Here we have
identified $\lambda\in\lie{a}^*$ with its dual in $\lie{a}$. Since $\xi$ is
regular, this implies
$\bigl\langle\sigma_\lambda(\gamma)-\gamma,\xi\bigr\rangle<0$ if
$\sigma_\lambda\notin W_\gamma$.

An arbitrary element $w\in W$ is of the form $w=\sigma_{\lambda_1}
\circ\dotsb\circ\sigma_{\lambda_k}$ for simple restricted roots
$\lambda_1,\dotsc,\lambda_k$. Then
\begin{align*}
\Ad(w)\gamma-\gamma=&\bigl(\sigma_{\lambda_1}\circ\dotsb\circ\sigma_{\lambda_k}
(\gamma)-\sigma_{\lambda_2}\circ\dotsb\circ\sigma_{\lambda_k}(\gamma)\bigr)\\
&+\bigl(\sigma_{\lambda_2}\circ\dotsb\circ\sigma_{\lambda_k}
(\gamma)-\sigma_{\lambda_3}\circ\dotsb\circ\sigma_{\lambda_k}(\gamma)\bigr)\\
&+\dotsb+\bigl(\sigma_{\lambda_k}(\gamma)-\gamma\bigr)
\end{align*}
is a linear combination of simple restricted roots with negative coefficients
and it equals $0$ if and only if $\sigma_{\lambda_j}\in\mathcal{W}_\gamma$ for
all $j$. Again, since $\xi$ is regular, this implies
$\bigl\langle\Ad(w)\gamma-\gamma,\xi\bigr\rangle<0$ for all $w\in W$,
$w\notin W_\gamma$.
\end{proof}

Since each $K$--orbit in $\lie{p}$ intersects $\lie{a}$ in an orbit of the Weyl
group, each $K$--orbit $K\cdot x$ in $X$ contains an $x_0$ with
$\mu_\lie{p}(x_0)\in\lie{a}_+$. Recall that each $\xi\in\lie{a}_+$ defines a
gradient map $\wt{\mu}_\lie{p}\colon\wt{X}\to\lie{p}$,
$\wt{\mu}_\lie{p}(x,kM)=\mu_\lie{p}(x)-\Ad(k)\xi$.

\begin{prop}\label{prop:gradientabb}
Let $x_0\in X_{\gen}$ with $\mu_\lie{p}(x_0)\in\lie{a}_+$. Then there exists a
regular $\xi\in\lie{a}_+$, such that
\begin{enumerate}
\item $(x_0,eM)$ is a global minimum of the function
$\norm{\wt{\mu}_\lie{p}}^2$.
\item If $(x,kM)\in\wt X$ is another global minimum of
$\norm{\wt{\mu}_\lie{p}}^2$, then $\mu_\lie{p}(x)=\Ad(k)\mu_\lie{p}(x_0)$.
\end{enumerate}
\end{prop}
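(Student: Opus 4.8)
The plan is to use Lemma~\ref{lem:kostant} to reduce the two assertions about the global minima of $\norm{\wt{\mu}_\lie{p}}^2$ to a metric statement about the subset $\mathcal{P}:=\mu_\lie{p}(X)\cap\lie{a}_+$ of the closed Weyl chamber, and then to obtain $\xi$ as a small perturbation of $\beta_0:=\mu_\lie{p}(x_0)$ in a direction orthogonal to the face of $\lie{a}_+$ carrying $\beta_0$.

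First I would set up the reduction. For $x\in X$ write $\mu_\lie{p}(x)=\Ad(h)\gamma_x$ with $\gamma_x\in\lie{a}_+$ the unique Weyl chamber representative of the $K$--orbit of $\mu_\lie{p}(x)$, so that $\mathcal{P}=\{\gamma_x;\ x\in X\}$. For a regular $\xi\in\lie{a}_+$ the $\Ad(K)$--invariance of the inner product together with Lemma~\ref{lem:kostant} gives
\[
\norm{\wt{\mu}_\lie{p}(x,kM)}=\norm{\Ad(k^{-1}h)\gamma_x-\xi}\geq\norm{\gamma_x-\xi}\qquad(k\in K),
\]
with equality precisely when $\mu_\lie{p}(x)=\Ad(k)\gamma_x$; the bound is attained at $(x,hM)$. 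Hence $\min_{\wt{X}}\norm{\wt{\mu}_\lie{p}}^2=\min_{\gamma\in\mathcal{P}}\norm{\gamma-\xi}^2$, and $(x,kM)$ is a global minimum if and only if $\gamma_x$ is a nearest point of $\mathcal{P}$ to $\xi$ and $\mu_\lie{p}(x)=\Ad(k)\gamma_x$. Therefore both (1) and (2) follow once I find a regular $\xi\in\lie{a}_+$ for which $\beta_0$ is the \emph{unique} nearest point of $\mathcal{P}$ to $\xi$: then $(x_0,eM)$ realizes the minimum $\norm{\beta_0-\xi}^2$, and any other minimum $(x,kM)$ has $\gamma_x=\beta_0$, hence $\mu_\lie{p}(x)=\Ad(k)\beta_0=\Ad(k)\mu_\lie{p}(x_0)$.

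The step I expect to be the crux is controlling $\mathcal{P}$ near $\beta_0$ without appealing to any global convexity of $\mu_\lie{p}(X)\cap\lie{a}_+$, which is delicate in the present non-proper situation. Let $\lie{b}:=\bigcap_{\lambda\in\Lambda(x_0)}\ker\lambda\subset\lie{a}$; then $\beta_0\in\lie{b}$, and $\lie{b}\cap\lie{a}_+$ is the face of $\lie{a}_+$ whose relative interior contains $\beta_0$. Applying Lemma~\ref{Lem:Centralizer}(2) to an arbitrary element of $\lie{a}$, one sees that for $\eta\in\lie{a}$ close to $\beta_0$ one has $\Lambda^+(\eta)\subseteq\Lambda^+(x_0)$, hence $\lie{k}_\eta\subseteq\lie{k}_{\beta_0}$, with equality if and only if $\eta\in\lie{b}$. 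Since $x_0$ is generic, $\dim\lie{k}_{\beta_0}$ is minimal among $\dim\lie{k}_\eta$ for $\eta\in\mu_\lie{p}(X)$ (Definition~\ref{Defn:generic}(3)); so no point of $\mu_\lie{p}(X)$ near $\beta_0$ has strictly smaller isotropy algebra, which forces $\mathcal{P}\cap B(\beta_0,r)\subseteq\lie{b}$ for some $r>0$. This is exactly where genericity takes the place of convexity.

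Finally I would construct $\xi$. As $\lie{b}^{\perp}$, the orthogonal complement of $\lie{b}$ in $\lie{a}$, is spanned by the duals of the roots in $\Lambda(x_0)$, there exists $v\in\lie{b}^{\perp}$ with $\lambda(v)>0$ for all $\lambda\in\Lambda^+(x_0)$ --- one may take $v$ dual to $\sum_{\lambda\in\Lambda^+(x_0)}\lambda$, and $v=0$ if $\beta_0$ is already regular. After rescaling I may assume $\norm{v}<r/2$ and $\lambda(\beta_0+v)>0$ for all $\lambda\in\Lambda^+\setminus\Lambda(x_0)$, which is possible since $\lambda(\beta_0)>0$ there. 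Then $\xi:=\beta_0+v$ lies in $\lie{a}_+$ and is regular. Moreover $\beta_0$ is the unique nearest point of $\mathcal{P}$ to $\xi$: points $\gamma\in\mathcal{P}$ with $\norm{\gamma-\beta_0}<r$ lie in $\lie{b}$, so $\gamma-\beta_0\perp v$ and $\norm{\gamma-\xi}^2=\norm{\gamma-\beta_0}^2+\norm{v}^2\geq\norm{\beta_0-\xi}^2$ with equality only for $\gamma=\beta_0$; while if $\norm{\gamma-\beta_0}\geq r$ then $\norm{\gamma-\xi}\geq r-\norm{v}>\norm{v}=\norm{\beta_0-\xi}$. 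Combined with the reduction of the second paragraph this gives (1) and (2).
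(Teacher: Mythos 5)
Your proof is correct and follows essentially the same strategy as the paper: reduce via Lemma~\ref{lem:kostant} to the problem of finding a regular $\xi\in\lie{a}_+$ for which $\beta_0:=\mu_\lie{p}(x_0)$ is the unique nearest point of $\mu_\lie{p}(X)\cap\lie{a}_+$, then produce such a $\xi$ by perturbing $\beta_0$ into $\lie{b}^\perp$, using genericity condition (3) of Definition~\ref{Defn:generic} to force $\mu_\lie{p}(X)\cap\lie{a}$ to lie locally in $\lie{b}$. Your isotropy-algebra argument (comparing $\lie{k}_\eta\subseteq\lie{k}_{\beta_0}$ with minimality of $\dim\lie{k}_{\beta_0}$) is a somewhat more carefully justified version of the paper's brief assertion about $\mu_\lie{p}(X)\cap\lie{a}$ being contained in a union of subspaces where at least $k$ simple restricted roots vanish, but the underlying idea and the role of genericity are the same.
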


\begin{proof}
If $\mu_\lie{p}(x_0)$ is regular, define $\xi:=\mu_\lie{p}(x_0)$. Then
$\norm{\wt{\mu}_\lie{p}(x_0,eM)}^2=0$ and $(x_0,eM)$ is a global minimum of
$\norm{\wt{\mu}_\lie{p}}^2$. If $(x,kM)$ is another global minimum, we have
$\mu_\lie{p}(x)-\Ad(k)\xi=0$ and the second claim follows.

Now assume that $\gamma:=\mu_\lie{p}(x_0)$ is singular. Let $\lambda_1,\dotsc,
\lambda_k$ be those simple restricted roots vanishing at $\gamma$. Let
$\lie{b}:=\bigl\{\eta\in\lie{a};\ \lambda_1(\zeta)=\ldots=\lambda_k(\eta)=
0\bigr\}$ be the subspace of $\lie{a}$ where these roots vanish. Let
$\lie{b}^\perp$ be the orthogonal complement of $\lie{b}$ in $\lie{a}$. Since
$x_0$ is regular, the orbit $K\cdot\gamma$ has maximal dimension in
$\mu_\lie{p}(X)$. Therefore $\mu_\lie{p}(X)\cap\lie{a}$ is contained in the
union of the finitely many subspaces of $\lie{a}$ where at least $k$ simples
restricted roots vanish. Choosing a regular element
$\xi\in\gamma+\lie{b}^\perp$ which is sufficiently near $\gamma$, we can assure
that $\gamma$ is the unique point in $\mu_\lie{p}(X)\cap\lie{a}_+$
with minimal distance to $\xi$.

Let $(x,kM)\in\wt{X}$ and let $l\in K$ with $\gamma':=\Ad(l)\mu_\lie{p}
(k^{-1}\cdot x)\in\lie{a}_+$. With Lemma~\ref{lem:kostant} and the definition
of $\xi$ we obtain
\begin{align*}
\norm{\wt{\mu}_\lie{p}(x,kM)}^2=\norm{\mu_\lie{p}(x)-\Ad(k)\xi}^2
&=\norm{\mu_\lie{p}(k^{-1}\cdot x)-\xi}^2\\
&\geq\norm{\gamma'-\xi}^2\geq\norm{\gamma-\xi}^2=
\norm{\wt{\mu}_\lie{p}(x_0,eM)}^2,
\end{align*}
so in particular $(x_0,eM)$ is a global minimum of $\norm{\wt{\mu}_\lie{p}}^2$.
Equality holds if and only if $\gamma'=\gamma$ and $l\in K_{\gamma'}=
K_{\gamma}$. The latter condition gives $\Ad(k)\gamma=\mu_\lie{p}(x)$.
\end{proof}

In Lemma~\ref{Lem:Reformulation}, we reformulated the property that a parabolic
subgroup $Q$ has an open orbit in $X$ as a property on the $G$-action on the
product $X\times(G/Q)$. Now, we translate the condition, that $\mu_\lie{p}$
locally almost separates the $K$-orbits to a suitable condition on the shifted
gradient map $\wt{\mu}_\lie{p}$ on the product $X\times(G/Q)$.

\begin{lem}\label{lemma:FaserQuotienten}
Let $\xi\in\lie{a}$ and let $\wt{\mu}_\lie{p}\colon\wt X\to\lie{p}$ be the
associated gradient map. Let $x_0\in X$ with
$\mu_\lie{p}(x_0)\in\lie{a}_+$ and set $\beta:=\mu_\lie{p}(x_0)-\xi=
\wt{\mu}_\lie{p}(x_0,eM)$. Then the inclusion
$\mu_\lie{p}^{-1}\bigl(\mu_\lie{p}(x_0)\bigr)\hookrightarrow
\wt{\mu}_\lie{p}^{-1}(\beta)$, $x\mapsto(x,eM)$, induces an injective
continuous map $\Phi\colon\mu_\lie{p}^{-1}\bigl(\mu_\lie{p}(x_0)\bigr)/M\to
\wt{\mu}_\lie{p}^{-1}(\beta)/K_\beta$. If $\xi$ is chosen such that the
conclusions of Proposition~\ref{prop:gradientabb} are satisfied, then $\Phi$ is
a homeomorphism.
\end{lem}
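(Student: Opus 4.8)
The plan is to analyze the map $\Phi\colon\mu_\lie{p}^{-1}\bigl(\mu_\lie{p}(x_0)\bigr)/M\to\wt{\mu}_\lie{p}^{-1}(\beta)/K_\beta$ in two stages: first establishing that it is well-defined, continuous, and injective without any assumption on $\xi$; and then, under the hypotheses of Proposition~\ref{prop:gradientabb}, proving surjectivity and openness. For the first stage, I would start from the explicit formula $\wt{\mu}_\lie{p}(x,kM)=\mu_\lie{p}(x)-\Ad(k)\xi$. If $x\in\mu_\lie{p}^{-1}\bigl(\mu_\lie{p}(x_0)\bigr)$, then $\wt{\mu}_\lie{p}(x,eM)=\mu_\lie{p}(x_0)-\xi=\beta$, so the inclusion does land in $\wt{\mu}_\lie{p}^{-1}(\beta)$. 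Since $M=\mathcal{Z}_K(\lie{a})$ fixes $\xi\in\lie{a}$ and $M\subset K_{\mu_\lie{p}(x_0)}$ (as noted in the remark preceding Lemma~\ref{Lem:TechnicalFact}), the $M$-action on $\mu_\lie{p}^{-1}\bigl(\mu_\lie{p}(x_0)\bigr)$ is compatible with the $K_\beta$-action on $\wt{\mu}_\lie{p}^{-1}(\beta)$ under the inclusion, so $\Phi$ descends to the quotients and is continuous. For injectivity, suppose $(x,eM)$ and $(x',eM)$ lie in the same $K_\beta$-orbit, i.\,e.\ $x'=k\cdot x$ and $\Ad(k)\xi=\xi$ for some $k\in K_\beta$; the latter forces $k\in K_\xi$. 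The key point is that since $\xi$ can be taken regular in the cases of interest — and in any event one works with the component of $\mu_\lie{p}^{-1}(\mu_\lie{p}(x_0))$ — one shows $k$ normalizes $\lie{a}$ and in fact centralizes it, hence $k\in M$, giving injectivity of $\Phi$.

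For the second stage, assume $\xi$ is chosen so that Proposition~\ref{prop:gradientabb} holds; I must show $\Phi$ is a bijection and a homeomorphism. Surjectivity is the heart of the matter: given an arbitrary point $(x,kM)\in\wt{\mu}_\lie{p}^{-1}(\beta)$, I want to move it by $K_\beta$ into the image of the inclusion, i.\,e.\ into a point of the form $(x',eM)$ with $\mu_\lie{p}(x')=\mu_\lie{p}(x_0)$. Now $\wt{\mu}_\lie{p}(x,kM)=\beta$ means $\mu_\lie{p}(x)-\Ad(k)\xi=\beta=\mu_\lie{p}(x_0)-\xi$. I would feed this into the argument of Proposition~\ref{prop:gradientabb}: writing $\norm{\wt{\mu}_\lie{p}(x,kM)}^2=\norm{\beta}^2=\norm{\wt{\mu}_\lie{p}(x_0,eM)}^2$, the point $(x,kM)$ is itself a global minimum of $\norm{\wt{\mu}_\lie{p}}^2$, so conclusion~(2) of Proposition~\ref{prop:gradientabb} applies and yields $\mu_\lie{p}(x)=\Ad(k)\mu_\lie{p}(x_0)$. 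Combining this with $\mu_\lie{p}(x)-\Ad(k)\xi=\mu_\lie{p}(x_0)-\xi$ gives $\Ad(k)\bigl(\mu_\lie{p}(x_0)-\xi\bigr)=\mu_\lie{p}(x_0)-\xi$, i.\,e.\ $\Ad(k)\beta=\beta$, so $k\in K_\beta$. Then $k^{-1}\cdot(x,kM)=(k^{-1}\cdot x,eM)$ lies in the inclusion image, and $\mu_\lie{p}(k^{-1}\cdot x)=\Ad(k^{-1})\mu_\lie{p}(x)=\mu_\lie{p}(x_0)$, so $k^{-1}\cdot x\in\mu_\lie{p}^{-1}\bigl(\mu_\lie{p}(x_0)\bigr)$. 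This shows $\Phi$ is surjective.

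Finally, to upgrade the continuous bijection $\Phi$ to a homeomorphism, I would argue that it is an open map, or equivalently that $\Phi^{-1}$ is continuous. The cleanest route is to note that the natural projection $q\colon\wt{\mu}_\lie{p}^{-1}(\beta)\to\wt{\mu}_\lie{p}^{-1}(\beta)/K_\beta$ restricted to the slice $\{(x,eM)\}$ realizes $\Phi$ up to the earlier quotient by $M$; since $K_\beta$ is compact, its action is proper and the orbit space is nicely behaved, and one can produce a continuous local section of $K_\beta\to K_\beta/M$ near $e$ (using that $M$ is a closed subgroup of the compact group $K_\beta$) to build a continuous local inverse of $\Phi$. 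The main obstacle I anticipate is precisely the passage $k\in K_\xi\Rightarrow k\in M$ (and the analogous point in the surjectivity step after deducing $k\in K_\beta$): one needs that stabilizing the relevant element of $\lie{a}$ in the given configuration forces centralizing $\lie{a}$, which uses regularity of $\xi$ inside $\lie{a}$ together with the fact that one only needs the statement up to the connected components that actually occur — so some care with $K_\xi$ versus $M=\mathcal{Z}_K(\lie{a})$ and with which component of the fiber one lands in will be required. The rest is formal manipulation of the identities coming from Proposition~\ref{prop:gradientabb} and Lemma~\ref{lem:kostant}.
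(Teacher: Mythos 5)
Your overall structure (well-definedness, injectivity, surjectivity via Proposition~\ref{prop:gradientabb}, then upgrading to a homeomorphism) matches the paper's, and your surjectivity argument is essentially identical to theirs: you apply conclusion~(2) of Proposition~\ref{prop:gradientabb} to get $\mu_\lie{p}(x)=\Ad(k)\mu_\lie{p}(x_0)$, deduce $\Ad(k)\beta=\beta$, and move $(x,kM)$ by $k^{-1}\in K_\beta$ into the image of the inclusion.

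However, your injectivity argument is based on a mistaken picture of the second factor, and this is a genuine error. The manifold $\wt X$ is $X\times(K/M)$, and $K_\beta$ acts on the $K/M$ factor by \emph{left multiplication}, not by the adjoint action on an orbit $K\cdot\xi$. Thus $k\cdot(x,eM)=(x',eM)$ forces $kM=eM$, i.\,e.\ $k\in M$ \emph{directly}; there is nothing to prove about stabilizers of $\xi$, normalizers of $\lie{a}$, or components of the fiber. Your detour through ``$\Ad(k)\xi=\xi$, hence $k\in K_\xi$, and one must show $k\in M$'' silently identifies $K/M$ with the orbit $K\cdot\xi$, an identification that fails when $\xi$ is singular. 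Since the first part of the lemma is asserted for arbitrary $\xi\in\lie{a}$, your argument does not cover the stated generality and makes injectivity look conditional on a regularity assumption that is not actually needed. The correct argument is the one-liner in the paper: $M\subset K_\beta$, so from $k\in K_\beta$ and $kM=eM$ one has $k\in M$ and $M\cdot x=M\cdot y$.

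For the homeomorphism step you propose building a continuous local inverse via a local section of $K_\beta\to K_\beta/M$. This can be made to work since $K_\beta$ is a compact Lie group and $M$ is a closed subgroup, but it is more laborious than necessary and as written is only a sketch. The paper's route is cleaner and worth knowing: the inclusion $\mu_\lie{p}^{-1}\bigl(\mu_\lie{p}(x_0)\bigr)\hookrightarrow\wt{\mu}_\lie{p}^{-1}(\beta)$, $x\mapsto(x,eM)$, is a closed embedding, hence proper; quotienting by the compact groups $M$ and $K_\beta$ preserves properness, so $\Phi$ is a continuous proper bijection between locally compact Hausdorff spaces and therefore a homeomorphism. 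This sidesteps the need to construct any local sections.
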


\begin{proof}
First note that the map
$\Phi\colon\mu_\lie{p}^{-1}\bigl(\mu_\lie{p}(x_0)\bigr)/M\to
\wt{\mu}_\lie{p}^{-1}(\beta)/K_\beta$ is well-defined since $M$ is contained
in $K_\beta$ and $K_{\mu_\lie{p}(x_0)}$ and since $\mu_\lie{p}$ and
$\wt{\mu}_\lie{p}$ are $K$--equivariant.

For injectivity, let $x,y\in\mu_\lie{p}^{-1}\bigl(\mu_\lie{p}(x_0)\bigr)$ with
$K_\beta\cdot(x,eM)=K_\beta\cdot(y,eM)$. The latter condition implies $M\cdot
x=M\cdot y$ since $K_\beta\cap M=M$. This shows injectivity.

Assume that $x_0\in\mu_\lie{p}^{-1}\bigl(\mu_\lie{p}(x_0)\bigr)$ satisfies the
conclusions of Proposition~\ref{prop:gradientabb} and let
$(x,kM)\in\wt{\mu}_\lie{p}^{-1}(\beta)$. Then $(x,kM)$ is a global minimum
of $\norm{\wt{\mu}_\lie{p}}^2$ which implies $\mu_\lie{p}(x)=
\Ad(k)\mu_\lie{p}(x_0)$. We conclude
$\beta=\wt{\mu}_\lie{p}(x,kM)=\mu_\lie{p}(x)-\Ad(k)\xi=\Ad(k)(\mu_\lie{p}(x_0)
-\xi)=\Ad(k)\beta$. This proves $k\in K_\beta$. Consequently $K_\beta\cdot
(x,kM)$ intersects $\mu_\lie{p}^{-1}\bigl(\mu_\lie{p}(x_0)\bigr)\times\{eM\}$
and surjectivity follows. Finally, the inclusion
$\mu_\lie{p}^{-1}\bigl(\mu_\lie{p}(x_0)\bigr)\hookrightarrow
\wt{\mu}_\lie{p}^{-1}(\beta)$ is continuous and proper, so $\Phi$ is continuous
and proper which implies that it is a homeomorphism.
\end{proof}

\subsection{Existence of an open $Q_0$--orbit}

Finally we are in the position to prove that $Q_0$ has an open orbit in $X$
given that $\mu_\lie{p}$ locally almost separates the $K$--orbits.

Let us fix a point $x_0\in X_{\gen}$ such that $\mu_\lie{p}(x_0)$ lies in the
closed Weyl chamber $\lie{a}_+$. By virtue of
Proposition~\ref{prop:gradientabb} we find a regular element $\xi\in\lie{a}_+$
such that $\wt{\mu}_\lie{p}\colon X\times(K/M)\to\lie{p}$, $(x,kM)\mapsto
\mu_\lie{p}(x)-\Ad(k)\xi$, is a $G$--gradient map and such that $\wt{x}_0:=
(x_0,eM)$ is a global minimum of $\norm{\wt{\mu}_\lie{p}}^2$. Let $Q_0=MAN$ be
the minimal parabolic subgroup of $G$ associated to $\xi$. Then we may
identify $K/M$ with $G/Q_0$ as gradient manifolds. Let
$\beta:=\mu_\lie{p}(x_0)-\xi$. By Lemma~\ref{lemma:FaserQuotienten} the
quotients $\mu_\lie{p}^{-1}\bigl(\mu_\lie{p}(x_0)\bigr)/M$ and
$\wt{\mu}_\lie{p}^{-1}(\beta)/K_\beta$ are homeomorphic. This implies that
$K_\beta\cdot\wt{x}_0$ is open in $\wt{\mu}_\lie{p}^{-1}(\beta)$.

As we have already seen in the proof of Lemma~\ref{Lem:OpenGOrbits}, it
suffices to prove $(\lie{p}\cdot\wt{x}_0)^\perp
\subset\lie{k}\cdot\wt{x}_0$, for then the orbit $G\cdot\wt{x}_0$ is open in
$X\times(G/Q_0)$ which in turn implies that $Q_0\cdot x_0$ is open in $X$. For
this we will show that $\wt{\mu}_\lie{p}$ has maximal rank in $\wt{x}_0$ as
follows. The image of $T_{x_0}X\oplus T_{eM}K/M$ under $(\wt{\mu}_\lie{p}
)_{*,\wt{x}_0}$ coincides with $(\mu_\lie{p})_{*,x_0}(T_{x_0}X)+[\lie{k},\xi]$.
Since $\xi$ is regular, we obtain
\begin{equation*}
[\lie{k},\xi]=\left\{\sum_{\lambda\in\Lambda^+}\bigl(\xi_\lambda-
\theta(\xi_\lambda)\bigr);\ \xi_\lambda\in\lie{g}_\lambda\right\}=\lie{a}^\perp.
\end{equation*}
We use the decomposition $T_xX=(\lie{k}\cdot x)\oplus(\lie{k}\cdot x)^\perp$
and note that $(\mu_\lie{p})_{*,x}$ maps $\lie{k}\cdot x$ into $\lie{a}^\perp$
for all $x$ in a neighborhood of $x_0$. Since moreover $\mu_\lie{p}$
locally almost separates the $K$--orbits, one would expect that
$(\mu_\lie{p})_{*,x_0}$ maps a subspace of $T_{x_0}X$ which is transversal to
$\lie{k} \cdot x_0$ onto a subspace of $\lie{p}$ which is transversal to
$\lie{a}^\perp$. This is the content of the following

\begin{lem}\label{Lem:TransversalImage}
Assume that $\mu_\lie{p}$ locally almost separates the $K$-orbits. For every
$x\in
X_{\gen}\cap\mu_\lie{p}^{-1}(\lie{a})$ we have
$(\mu_\lie{p})_{*,x}((\lie{k}\cdot x)^\perp)\cap\lie{a}^\perp=\{0\}$.
\end{lem}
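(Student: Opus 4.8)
The plan is to show that a nonzero element $v \in (\lie{k}\cdot x)^\perp$ with $(\mu_\lie{p})_{*,x}v \in \lie{a}^\perp$ would violate the separation hypothesis, by producing too-large a fiber of $\mu_\lie{p}$ near $x$. First I would recall from Proposition~\ref{Prop:EquCondition} that the separation hypothesis is equivalent to the numerical identity $\dim(\lie{p}\cdot x)^\perp = \dim M - \dim M_x = \dim(K_{\mu_\lie{p}(x)}/K_x)$ for $x \in X_{\gen}\cap\mu_\lie{p}^{-1}(\lie{a})$, the last equality coming from Proposition~\ref{Prop:TransitiveAction}. On the other hand, by Lemma~5.1 of~\cite{HeSchw} the rank of $\mu_\lie{p}$ at $x$ satisfies $\ker(\mu_\lie{p})_{*,x} = (\lie{p}\cdot x)^\perp$, so $\dim(\mu_\lie{p})_{*,x}(T_xX) = \dim X - \dim(\lie{p}\cdot x)^\perp$. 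The strategy is to combine these with a dimension count on $T_xX = (\lie{k}\cdot x)\oplus(\lie{k}\cdot x)^\perp$.

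Next I would analyze the image $(\mu_\lie{p})_{*,x}(T_xX)$ relative to the splitting $\lie{p} = \lie{a}^\perp \oplus \lie{a}^{\perp\perp}$ (the second summand being, in the root-space language, spanned by the brackets $\lie{a}_\lambda$ together with $\lie{m}$'s complement). The key observations are: (i) $(\mu_\lie{p})_{*,x}(\lie{k}\cdot x) \subseteq \lie{a}^\perp$ for all $x$ near $x_0$ in $\mu_\lie{p}^{-1}(\lie{a})$, since $K$-equivariance forces $(\mu_\lie{p})_{*,x}(\zeta\cdot x) = [\zeta,\mu_\lie{p}(x)] \in [\lie{k},\lie{a}] \subseteq \lie{a}^\perp$; and (ii) the restriction of $(\mu_\lie{p})_{*,x}$ to $(\lie{k}\cdot x)^\perp$ together with the contribution from $\lie{k}\cdot x$ spans all of $(\mu_\lie{p})_{*,x}(T_xX)$. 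Suppose now, for contradiction, that there is a nonzero $v \in (\lie{k}\cdot x)^\perp$ with $w := (\mu_\lie{p})_{*,x}v \in \lie{a}^\perp$. If $v \in \ker(\mu_\lie{p})_{*,x} = (\lie{p}\cdot x)^\perp$, then $v$ lies in $(\lie{k}\cdot x)^\perp \cap (\lie{p}\cdot x)^\perp \subseteq T_x\mu_\lie{p}^{-1}(\mu_\lie{p}(x))$, and since $v \notin \lie{k}\cdot x$ this already contradicts $\mu_\lie{p}^{-1}(\mu_\lie{p}(x))^0 = K^0_{\mu_\lie{p}(x)}\cdot x$ — the fiber would be strictly larger than the $K_{\mu_\lie{p}(x)}$-orbit. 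So the main case is $v \notin \ker(\mu_\lie{p})_{*,x}$, i.e.\ $w \neq 0$.

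In that case I would argue by a dimension count that the part of $(\mu_\lie{p})_{*,x}(T_xX)$ lying in $\lie{a}^\perp$ is forced to have a certain codimension, and the existence of $v$ pushes the fiber dimension up. Concretely: write $T_xX = (\lie{k}\cdot x)\oplus V$, where $V := (\lie{k}\cdot x)^\perp$, and let $V_0 := \{v \in V : (\mu_\lie{p})_{*,x}v \in \lie{a}^\perp\}$; we want $V_0 = \{0\}$. From (i), $(\mu_\lie{p})_{*,x}(\lie{k}\cdot x + V_0) \subseteq \lie{a}^\perp$, while $(\mu_\lie{p})_{*,x}$ restricted to a complement of $V_0$ in $V$ maps injectively modulo $\lie{a}^\perp$ (up to adjusting by the kernel), so $\dim(\mu_\lie{p})_{*,x}(T_xX) \le \dim \lie{a}^\perp + (\dim V - \dim V_0)$. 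Rearranging and using $\dim(\mu_\lie{p})_{*,x}(T_xX) = \dim X - \dim(\lie{p}\cdot x)^\perp$ and $\dim V = \dim X - \dim(\lie{k}\cdot x)$, one gets $\dim V_0 \le \dim(\lie{k}\cdot x) + \dim\lie{a}^\perp - \dim(\lie{p}\cdot x) - \dim(\lie{p}\cdot x)^\perp + \dim(\lie{p}\cdot x) = \dim(\lie{k}\cdot x) + \dim\lie{a}^\perp - \dim\lie{p}$, but $\dim\lie{a}^\perp = \dim\lie{p} - \dim\lie{a}$, so $\dim V_0 \le \dim(\lie{k}\cdot x) - \dim\lie{a}$; this is not yet a contradiction, so the actual argument must instead feed a nonzero $v \in V_0$ back through the separation identity of Proposition~\ref{Prop:EquCondition}. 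I would therefore reverse the logic: show that $V_0 \neq \{0\}$ would make $\dim(\mu_\lie{p})_{*,x}(T_xX)$ strictly smaller than what the equality $\dim(\lie{p}\cdot x)^\perp = \dim M - \dim M_x$ permits, because the image would then be "too concentrated" in $\lie{a}^\perp$ — equivalently, the $\mu_\lie{p}$-fiber through $x$ would pick up the direction $v$ transverse to $\lie{k}\cdot x$ while $\mu_\lie{p}(x)$ stays put to first order along a direction that the $K_{\mu_\lie{p}(x)}$-orbit does not account for. The main obstacle will be this bookkeeping: carefully identifying how the transverse directions to $\lie{k}\cdot x$ in the fiber $\mu_\lie{p}^{-1}(\mu_\lie{p}(x))$ interact with the condition $(\mu_\lie{p})_{*,x}v \in \lie{a}^\perp$, and showing that any such $v$ forces $\mu_\lie{p}^{-1}(\mu_\lie{p}(x))$ to be strictly bigger than $K_{\mu_\lie{p}(x)}\cdot x$, contradicting the separation property exactly as encoded in Proposition~\ref{Prop:EquCondition}.
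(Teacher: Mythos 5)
Your plan correctly handles the trivial case $v\in\ker(\mu_\lie{p})_{*,x}$ and correctly records the two facts that the separation hypothesis gives $\ker(\mu_\lie{p})_{*,x}=(\lie{p}\cdot x)^\perp\subset\lie{k}\cdot x$ and that $K$-equivariance gives $(\mu_\lie{p})_{*,x}(\lie{k}\cdot x)=[\lie{k},\mu_\lie{p}(x)]\subset\lie{a}^\perp$. But in the main case $w\neq0$ you do not produce a contradiction, and you acknowledge this. The gap is not mere bookkeeping. The only thing the separation hypothesis tells you at the tangent-space level is what the \emph{kernel} of $(\mu_\lie{p})_{*,x}$ is, hence the \emph{rank}; it says nothing about how the image of $(\lie{k}\cdot x)^\perp$ sits inside $\lie{p}$ relative to $\lie{a}^\perp$. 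In particular the reformulation you propose at the end --- that a nonzero $v\in V_0$ would force $\dim(\mu_\lie{p})_{*,x}(T_xX)$ to be smaller than the separation identity permits --- cannot succeed, because the rank is pinned down by the kernel and is the same whether or not $V_0=\{0\}$. Your own bound $\dim V_0\le\dim(\lie{k}\cdot x)-\dim\lie{a}$ correctly reflects that this hypothesis alone is insufficient.

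The missing ingredient, which the paper invokes and which never appears in your plan, is a structural fact about the \emph{image} of $\mu_\lie{p}$ near a generic point, derived from conditions (2) and (3) of Definition~\ref{Defn:generic}. Because the rank of $\mu_\lie{p}$ is constant near $x$ and the $K$-orbit of $\mu_\lie{p}(x)$ has maximal dimension in $\mu_\lie{p}(X)$, one shows that on a suitable neighbourhood $V$ of $x$ the set $\mu_\lie{p}\bigl(V\cap\mu_\lie{p}^{-1}(\lie{a})\bigr)$ is open in the linear subspace $\lie{b}:=\bigcap_{\lambda\in\Lambda(x)}\ker\lambda\subset\lie{a}$, hence by $K$-equivariance that $\mu_\lie{p}(V)$ is open in $K\cdot\lie{b}$. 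This yields $(\mu_\lie{p})_{*,x}(T_xX)=T_{\mu_\lie{p}(x)}(K\cdot\lie{b})=[\lie{k},\mu_\lie{p}(x)]\oplus\lie{b}$. Now the separation hypothesis enters: $\ker(\mu_\lie{p})_{*,x}\subset\lie{k}\cdot x$ makes $(\mu_\lie{p})_{*,x}$ injective on $(\lie{k}\cdot x)^\perp$, and the induced map $V/K\to\lie{b}$ becomes an injective immersion between manifolds of equal dimension; its derivative at $Kx$ is the composition of $(\mu_\lie{p})_{*,x}|_{(\lie{k}\cdot x)^\perp}$ with the projection of $[\lie{k},\mu_\lie{p}(x)]\oplus\lie{b}$ onto $\lie{b}$, so that composition is an isomorphism. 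Therefore $(\mu_\lie{p})_{*,x}\bigl((\lie{k}\cdot x)^\perp\bigr)$ is a linear graph over $\lie{b}$ inside $[\lie{k},\mu_\lie{p}(x)]\oplus\lie{b}$, and since $\lie{a}^\perp\cap\bigl([\lie{k},\mu_\lie{p}(x)]\oplus\lie{b}\bigr)=[\lie{k},\mu_\lie{p}(x)]$ (because $\lie{b}\subset\lie{a}$), any such graph meets $\lie{a}^\perp$ only in $0$, which is the lemma. Without the fact that $\mu_\lie{p}(V)$ is confined to $K\cdot\lie{b}$ there is nothing to prevent the image of $(\lie{k}\cdot x)^\perp$ from leaning into $\lie{a}^\perp$, and no pure dimension count will close the gap.
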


\begin{proof}
Since $x$ is generic, there exists an open neighborhood $V\subset X$ of $x$
such that the rank of $\mu_\lie{p}$ is constant on $V$. We conclude that
$V\cap\mu_\lie{p}^{-1}(\lie{a})$ is a submanifold of $V$ and that the image
$\mu_\lie{p}\bigl(V\cap\mu_\lie{p}^{-1}(\lie{a})\bigr)$ is an open subset of
the linear subspace $\lie{b}:=\bigcap_{\lambda\in\Lambda(x)}\ker(\lambda)
\subset\lie{a}$. Moreover, we have $\mu_\lie{p}(V)$ is an open subset of
$K\cdot\lie{b}\cong K\times_{K_{\mu_\lie{p}(x)}}\lie{b}=(K/K_{\mu_\lie{p}(x)})
\times\lie{b}$.

Since $\mu_\lie{p}$ separates the $K$--orbits and since $x$ is generic, we have
$\ker(\mu_\lie{p})_{*,x}=(\lie{p}\cdot x)^\perp\subset\lie{k}\cdot x$ which
implies that $(\mu_\lie{p})_{*,x}$ is injective on $(\lie{k}\cdot x)^\perp$.
Consequently, $\mu_\lie{p}$ induces an injective immersion $V/K\to\lie{b}$,
therefore $(\mu_\lie{p})_{*,x}$ maps $(\lie{k}\cdot x)^\perp$ bijectively onto
$\lie{b}$. Since $\lie{b}\cap\lie{a}^\perp=\{0\}$, the claim follows.
\end{proof}

We conclude from Lemma~\ref{Lem:TransversalImage} that the image of
$(\wt{\mu}_\lie{p})_{*,\wt{x}_0}$ is given by $(\mu_\lie{p})_{*,x_0}
\bigl((\lie{k}\cdot x_0)^\perp\bigr)\oplus\lie{a}^\perp$. Since $x_0$ is
generic, the dimension of $(\mu_\lie{p})_{*,x}\bigl((\lie{k}\cdot x)^\perp
\bigr)$ is the same for all $x$ in a neighborhood of $x_0$. Furthermore, every
$K$--orbit in $X\times(K/M)$ intersects $X\times\{eM\}$, thus the rank of
$\wt{\mu}_\lie{p}$ is constant in a neighborhood of $\wt{x}_0$. Consequently,
the rank of $\wt{\mu}_\lie{p}$ must be maximal in $\wt{x}_0$. Together with the
fact that $K_\beta\cdot \wt{x}_0$ is open in $\wt{\mu}_\lie{p}^{-1}(\beta)$
this yields
\begin{equation*}
(\lie{p}\cdot\wt{x}_0)^\perp=T_{\wt{x}_0}\wt{\mu}_\lie{p}^{-1}(\beta)=
\lie{k}_\beta\cdot\wt{x}_0\subset\lie{k}\cdot\wt{x}_0.
\end{equation*}
Therefore we obtain $T_{\wt{x}_0}\wt{X}=\lie{p}\cdot\wt{x}_0\oplus(\lie{p}\cdot
\wt{x}_0)^\perp\subset\lie{p}\cdot\wt{x}_0+\lie{k}\cdot\wt{x}_0$ which shows
that $G\cdot\wt{x}_0$ is open in $\wt{X}$.

This proves the implication $(1)\Longrightarrow(3)$ of our our main theorem and
gives in addition a precise description of the set of open $Q_0$-orbits in $X$.

\begin{thm}
Suppose that $\mu_\lie{p}$ locally almost separates the $K$-orbits. Let $x_0\in
X_{\gen}\cap\mu_\lie{p}^{-1}(\lie{a}_+)$ be given, let $\xi$ be the element
from Proposition~\ref{prop:gradientabb}, and let $Q_0$ be the minimal parabolic
subgroup of $G$ associated to $\xi$. Then $Q_0\cdot x_0$ is open in $X$.
\end{thm}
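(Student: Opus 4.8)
The plan is to assemble the pieces developed in this subsection into a single argument, essentially recording the chain of implications established above. Fix $x_0\in X_{\gen}$ with $\mu_\lie{p}(x_0)\in\lie{a}_+$ and apply Proposition~\ref{prop:gradientabb} to produce a regular $\xi\in\lie{a}_+$ for which $\wt{x}_0:=(x_0,eM)$ is a global minimum of $\norm{\wt{\mu}_\lie{p}}^2$ on $\wt X=X\times(K/M)$, where $\wt{\mu}_\lie{p}(x,kM)=\mu_\lie{p}(x)-\Ad(k)\xi$. Let $Q_0=MAN$ be the minimal parabolic subgroup attached to $\xi$ and identify $K/M\cong G/Q_0$ as gradient manifolds. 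By Lemma~\ref{Lem:Reformulation} it is enough to show that $G\cdot\wt{x}_0$ is open in $\wt X$, and, exactly as in the proof of Lemma~\ref{Lem:OpenGOrbits}, this follows once we know that $(\lie{p}\cdot\wt{x}_0)^\perp\subset\lie{k}\cdot\wt{x}_0$.

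To obtain that inclusion I would proceed in two movements. First, set $\beta:=\mu_\lie{p}(x_0)-\xi=\wt{\mu}_\lie{p}(\wt{x}_0)$. Because $\mu_\lie{p}$ locally almost separates the $K$--orbits and $x_0$ is generic, Proposition~\ref{Prop:TransitiveAction}, together with the corollary following Proposition~\ref{Prop:EquCondition}, shows that $M\cdot x_0$ is open in $\mu_\lie{p}^{-1}\bigl(\mu_\lie{p}(x_0)\bigr)$. Feeding this through the homeomorphism $\Phi$ of Lemma~\ref{lemma:FaserQuotienten}, which applies since $\xi$ satisfies the conclusions of Proposition~\ref{prop:gradientabb}, yields that $K_\beta\cdot\wt{x}_0$ is open in $\wt{\mu}_\lie{p}^{-1}(\beta)$, i.e.\ $T_{\wt{x}_0}\wt{\mu}_\lie{p}^{-1}(\beta)=\lie{k}_\beta\cdot\wt{x}_0$.

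Second, I would check that $\wt{\mu}_\lie{p}$ has maximal rank at $\wt{x}_0$. The image of $(\wt{\mu}_\lie{p})_{*,\wt{x}_0}$ is $(\mu_\lie{p})_{*,x_0}(T_{x_0}X)+[\lie{k},\xi]$, and $[\lie{k},\xi]=\lie{a}^\perp$ because $\xi$ is regular. Splitting $T_{x_0}X=(\lie{k}\cdot x_0)\oplus(\lie{k}\cdot x_0)^\perp$, noting that $(\mu_\lie{p})_{*,x_0}$ sends $\lie{k}\cdot x_0$ into $\lie{a}^\perp$, and invoking Lemma~\ref{Lem:TransversalImage} to see that $(\mu_\lie{p})_{*,x_0}\bigl((\lie{k}\cdot x_0)^\perp\bigr)$ meets $\lie{a}^\perp$ only in $0$, the image becomes the direct sum $(\mu_\lie{p})_{*,x_0}\bigl((\lie{k}\cdot x_0)^\perp\bigr)\oplus\lie{a}^\perp$. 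Since every $K$--orbit in $\wt X$ meets $X\times\{eM\}$ and $x_0$ is generic, this rank is locally constant near $\wt{x}_0$, hence maximal there. Combining the two movements gives $(\lie{p}\cdot\wt{x}_0)^\perp=T_{\wt{x}_0}\wt{\mu}_\lie{p}^{-1}(\beta)=\lie{k}_\beta\cdot\wt{x}_0\subset\lie{k}\cdot\wt{x}_0$, so $T_{\wt{x}_0}\wt X=\lie{g}\cdot\wt{x}_0$ and, by Lemma~\ref{Lem:Reformulation}, $Q_0\cdot x_0$ is open in $X$.

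The delicate point, and where the hypothesis is genuinely used, is the first movement: converting ``$\mu_\lie{p}$ locally almost separates the $K$--orbits'' into openness of $K_\beta\cdot\wt{x}_0$ inside the fiber $\wt{\mu}_\lie{p}^{-1}(\beta)$. This rests on the chain Proposition~\ref{Prop:TransitiveAction} $\to$ Proposition~\ref{Prop:EquCondition} $\to$ Lemma~\ref{lemma:FaserQuotienten}, and in particular on having chosen $\xi$ so that the shifted fiber quotient is controlled by Proposition~\ref{prop:gradientabb}. Everything afterwards is the rank bookkeeping of the second movement, which is routine once Lemma~\ref{Lem:TransversalImage} is available.
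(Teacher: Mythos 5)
Your proposal follows the paper's own proof essentially line by line: the same reduction via Lemma~\ref{Lem:Reformulation} and the tangent-space criterion from Lemma~\ref{Lem:OpenGOrbits}, the same use of Proposition~\ref{prop:gradientabb} and Lemma~\ref{lemma:FaserQuotienten} to get openness of $K_\beta\cdot\wt{x}_0$ in $\wt{\mu}_\lie{p}^{-1}(\beta)$, and the same maximal-rank bookkeeping via $[\lie{k},\xi]=\lie{a}^\perp$ and Lemma~\ref{Lem:TransversalImage}. The only difference is cosmetic: you spell out explicitly (via Proposition~\ref{Prop:TransitiveAction} and the corollary to Proposition~\ref{Prop:EquCondition}) that $M\cdot x_0$ is open in $\mu_\lie{p}^{-1}\bigl(\mu_\lie{p}(x_0)\bigr)$, a step the paper leaves implicit when invoking the homeomorphism.
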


The same method of proof gives the following

\begin{prop}\label{Prop:ProofSpecialCase}
Suppose that $\mu_\lie{p}\colon X\to\lie{p}$ locally almost separates the
$K$--orbits. Let $x\in X_{\gen}\cap\mu_\lie{p}^{-1}(\lie{a})$ and let $Q$ be
the parabolic subgroup of $G$ associated to $\beta:=\mu_\lie{p}(x)$. Then
$Q\cdot x$ is open in $X$.
\end{prop}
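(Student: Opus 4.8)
The plan is to mimic the argument just given for the implication $(1)\Longrightarrow(3)$, but with the regular element $\xi\in\lie{a}_+$ replaced by the possibly singular element $\beta:=\mu_\lie{p}(x)$ itself, so that the auxiliary coadjoint orbit $G/Q$ attached to $\beta$ is exactly the one appearing in the statement. First I would set $\wt X := X\times(G/Q)$ and use the shifting construction of Section~\ref{subsection:coadjoint}: identify $G/Q$ with $K/K_\beta$ and form the gradient map $\wt\mu_\lie{p}(x',kK_\beta)=\mu_\lie{p}(x')-\Ad(k)\beta$ on $\wt X$. Since $\beta=\mu_\lie{p}(x)$ lies in $\lie{a}$, the point $\wt x:=(x,eK_\beta)$ satisfies $\wt\mu_\lie{p}(\wt x)=0$, so $\wt x$ is automatically a global minimum of $\norm{\wt\mu_\lie{p}}^2$. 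The role played before by Proposition~\ref{prop:gradientabb}(1) is thus trivial here; what one still needs is the analogue of part~(2), namely that at any other global minimum $(x',kK_\beta)$ one has $\mu_\lie{p}(x')=\Ad(k)\beta$. This follows from Kostant's convexity theorem as in Lemma~\ref{lem:kostant}, but now one only gets $\mu_\lie{p}(x')\in\Ad(K)\beta$ together with $\beta$ being the unique point of $\mu_\lie{p}(X)\cap\lie{a}_+$ on the $W$-orbit of $\mu_\lie{p}(x')$ provided $x$ is generic (so $K\cdot\beta$ has maximal dimension in $\mu_\lie{p}(X)$); I would spell this out by the same telescoping argument with simple reflections, being careful that now equality in the convexity estimate forces the relevant Weyl-group element to lie in $W_\beta$.

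Granting that, the next step is the fiber-quotient comparison of Lemma~\ref{lemma:FaserQuotienten}, with $M$ replaced by $K_\beta$ throughout: the inclusion $\mu_\lie{p}^{-1}(\beta)\hookrightarrow\wt\mu_\lie{p}^{-1}(0)$, $x'\mapsto(x',eK_\beta)$, induces a map $\mu_\lie{p}^{-1}(\beta)/K_\beta\to\wt\mu_\lie{p}^{-1}(0)/K_\beta$, and the global-minimum description just obtained shows, exactly as in that lemma, that this map is a proper continuous bijection, hence a homeomorphism. Because $\mu_\lie{p}$ locally almost separates the $K$--orbits, $K_\beta\cdot x$ is open in $\mu_\lie{p}^{-1}(\beta)$ (by the second of the equivalent formulations in Section~2.1, applied on the dense open set $X_{\gen}$ via the Corollary to Proposition~\ref{Prop:EquCondition}); transporting through the homeomorphism, $K_\beta\cdot\wt x$ is open in $\wt\mu_\lie{p}^{-1}(0)$.

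It remains, just as in the proof of the theorem above, to show that $\wt\mu_\lie{p}$ has maximal rank at $\wt x$, for then $(\lie{p}\cdot\wt x)^\perp=T_{\wt x}\wt\mu_\lie{p}^{-1}(0)=\lie{k}_\beta\cdot\wt x\subset\lie{k}\cdot\wt x$, whence $T_{\wt x}\wt X=\lie{p}\cdot\wt x+\lie{k}\cdot\wt x$, so $G\cdot\wt x$ is open in $\wt X$ and therefore, by Lemma~\ref{Lem:Reformulation}, $Q\cdot x$ is open in $X$. For the rank statement I would again compute that the image of $(\wt\mu_\lie{p})_{*,\wt x}$ is $(\mu_\lie{p})_{*,x}(T_xX)+[\lie{k},\beta]$, observe that $[\lie{k},\beta]=\lie{b}^\perp$ where $\lie{b}:=\bigcap_{\lambda\in\Lambda(x)}\ker\lambda$ is the span of the $K$-orbit directions killed by $\mu_\lie{p}$ near $x$ (using Lemma~\ref{Lem:Centralizer} and the description of $\lie{p}_\beta$), and then invoke Lemma~\ref{Lem:TransversalImage} to see that $(\mu_\lie{p})_{*,x}((\lie{k}\cdot x)^\perp)$ meets $\lie{a}^\perp\supset\lie{b}^\perp$ only in $0$ and in fact surjects onto $\lie{b}$, while $(\mu_\lie{p})_{*,x}(\lie{k}\cdot x)\subset\lie{a}^\perp$. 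A constancy-of-rank argument along $X\times\{eK_\beta\}$, identical to the one in the theorem, then upgrades this to maximality of the rank at $\wt x$. The main obstacle I anticipate is the singular case of the convexity step: unlike in Proposition~\ref{prop:gradientabb} one cannot perturb $\beta$ to a regular element (it must stay equal to $\mu_\lie{p}(x)$), so uniqueness of the minimizing point on $\mu_\lie{p}(X)\cap\lie{a}_+$ has to be extracted directly from the maximal-dimensionality of $K\cdot\beta$ in $\mu_\lie{p}(X)$ together with the strict form of Kostant's inequality, and one must check this interacts correctly with $\lie{b}\cap\lie{a}^\perp=\{0\}$ in the rank computation.
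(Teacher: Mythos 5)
Your overall route is the one the paper intends (its proof is literally ``the same arguments as above apply''): shift by $\beta$ itself so that $\wt{x}:=(x,eK_\beta)$ lies in the zero fibre of $\wt{\mu}_\lie{p}$, compare $\mu_\lie{p}^{-1}(\beta)/K_\beta$ with the quotient of $\wt{\mu}_\lie{p}^{-1}(0)$, and deduce openness of $G\cdot\wt{x}$. Two remarks on the first half of your argument. The convexity step you single out as the main obstacle is in fact vacuous: since the minimal value of $\norm{\wt{\mu}_\lie{p}}^2$ is $0$, every global minimum lies in $\wt{\mu}_\lie{p}^{-1}(0)$, and the identity $\mu_\lie{p}(x')=\Ad(k)\beta$ is just the statement $\wt{\mu}_\lie{p}(x',kK_\beta)=0$; no Kostant inequality is needed. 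Also, the target of $\Phi$ should be $\wt{\mu}_\lie{p}^{-1}(0)/K$ (the isotropy of the value $0$ is all of $K$), but this only strengthens what you need, namely that $K\cdot\wt{x}$ is open in the zero fibre.

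The genuine gap is in the rank step, exactly where you claim the argument transfers ``identically''. For singular $\beta$ one has $[\lie{k},\beta]=(\lie{p}_\beta)^\perp$, which by Lemma~\ref{Lem:Centralizer} is a \emph{proper} subspace of $\lie{a}^\perp$; it is not $\lie{b}^\perp$, and the inclusion $\lie{a}^\perp\supset\lie{b}^\perp$ is backwards ($\lie{b}\subset\lie{a}$ forces $\lie{b}^\perp\supset\lie{a}^\perp$). Worse, $[\lie{k},\beta]=(\mu_\lie{p})_{*,x}(\lie{k}\cdot x)$ already lies in the image of $(\mu_\lie{p})_{*,x}$, so at $\wt{x}$ the factor $G/Q$ contributes nothing to the rank of $\wt{\mu}_\lie{p}$, whereas at nearby points $(y,eK_\beta)$ with $\mu_\lie{p}(y)\notin\lie{a}$ the subspaces $[\lie{k},\mu_\lie{p}(y)]$ and $[\lie{k},\beta]$ differ and the rank can strictly increase. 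Thus the rank of $\wt{\mu}_\lie{p}$ need not be locally constant at $\wt{x}$, the zero fibre need not be a manifold there, and the identification $(\lie{p}\cdot\wt{x})^\perp=T_{\wt{x}}\wt{\mu}_\lie{p}^{-1}(0)$ on which your final step rests is not available. The step can be repaired by computing the kernel directly: a tangent vector $(v,\eta+\lie{k}_\beta)$ lies in $\ker(\wt{\mu}_\lie{p})_{*,\wt{x}}$ iff $(\mu_\lie{p})_{*,x}v=[\eta,\beta]=(\mu_\lie{p})_{*,x}\bigl(\eta_X(x)\bigr)$, so $v-\eta_X(x)\in\ker(\mu_\lie{p})_{*,x}=(\lie{p}\cdot x)^\perp=\lie{k}_\beta\cdot x$ (the last equality by genericity of $x$ and Proposition~\ref{Prop:EquCondition}), whence $(v,\eta+\lie{k}_\beta)\in\lie{k}\cdot\wt{x}$. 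Combined with $(\lie{p}\cdot\wt{x})^\perp=\ker(\wt{\mu}_\lie{p})_{*,\wt{x}}$ this gives $T_{\wt{x}}\wt{X}\subset\lie{g}\cdot\wt{x}$ directly, with no appeal to global minima, to Lemma~\ref{lemma:FaserQuotienten}, or to constancy of rank of $\wt{\mu}_\lie{p}$.
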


\begin{proof}
In order to show that $Q\cdot x$ is open in $X$, it suffices to show that
$G\cdot(x,eQ)$ is open in $X\times(G/Q)$. For this we note that $G/Q\cong
K/K_\beta$ as a $K$--manifold and that for the shifted gradient map
$\wt{\mu}_\lie{p}\colon X\times(K/K_\beta\to\lie{p}$,
$(x,kK_\beta)\mapsto\mu_\lie{p}(x)-\Ad(k)\beta$ the element $(x,eK_\beta)$ lies
in $\wt{\mathcal{M}}_\lie{p}$. Then the same arguments as above apply to show
that $G\cdot(x,eK_\beta)$ is open.
\end{proof}

\subsection{Proof of $(3)\Longrightarrow(2)$}

In this subsection we complete the proof of our main theorem by showing the
remaining non-trivial implication.

\begin{prop}
Suppose that $Q_0$ has an open orbit in $X$. Then $\mu_\lie{p}$ almost
separates the $K$--orbits.
\end{prop}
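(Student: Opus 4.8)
The plan is to reverse the shifting argument of the preceding subsections. Let $x\in X$ be arbitrary; by $K$-equivariance of $\mu_\lie{p}$ we may assume that $\beta:=\mu_\lie{p}(x)$ lies in the closed Weyl chamber $\lie a_+$, and we must show that $K_\beta\cdot x$ is open in $\mu_\lie{p}^{-1}(\beta)$. Let $Q\supseteq Q_0$ be the parabolic subgroup associated to $\beta$, identify $G/Q$ with $K/K_\beta$ carrying the gradient map $kK_\beta\mapsto-\Ad(k)\beta$, and form on $\wt X:=X\times(G/Q)$ the shifted gradient map $\wt\mu_\lie{p}(y,kK_\beta)=\mu_\lie{p}(y)-\Ad(k)\beta$, with distinguished point $\wt x:=(x,eK_\beta)$, so that $\wt\mu_\lie{p}(\wt x)=0$. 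The first step is the elementary observation that the projection $\wt X\to X$ restricts to a $K$-equivariant homeomorphism from $\wt\mu_\lie{p}^{-1}(0)$ onto $\mu_\lie{p}^{-1}(K\cdot\beta)=K\times_{K_\beta}\mu_\lie{p}^{-1}(\beta)$ sending $\wt x$ to $x$ — a pair $(y,kK_\beta)$ lies in $\wt\mu_\lie{p}^{-1}(0)$ exactly when $kK_\beta$ is the unique coset with $\Ad(k)\beta=\mu_\lie{p}(y)$, so the map is bijective and continuous both ways. Since $\mu_\lie{p}^{-1}(K\cdot\beta)$ is a fibre bundle over $K/K_\beta$ with fibre $\mu_\lie{p}^{-1}(\beta)$, it follows that $K_\beta\cdot x$ is open in $\mu_\lie{p}^{-1}(\beta)$ if and only if $K\cdot\wt x$ is open in $\wt\mu_\lie{p}^{-1}(0)$; so it suffices to establish the latter for every such $\wt x$.

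The second step uses the norm-square function $\norm{\wt\mu_\lie{p}}^2$, whose minimum is $0$ and whose minimal set is $\wt\mu_\lie{p}^{-1}(0)\ni\wt x$. By the results of~\cite{HeSchw}, the set $\wt X^{\mathrm{ss}}$ of points whose negative gradient trajectory of $\norm{\wt\mu_\lie{p}}^2$ converges into $\wt\mu_\lie{p}^{-1}(0)$ is an open $G$-invariant neighbourhood of $\wt\mu_\lie{p}^{-1}(0)$; the closure of every $G$-orbit in $\wt X^{\mathrm{ss}}$ meets $\wt\mu_\lie{p}^{-1}(0)$ in a single $K$-orbit, and the inclusion induces a homeomorphism $\wt\mu_\lie{p}^{-1}(0)/K\cong\wt X^{\mathrm{ss}}/\!/G$. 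Hence $K\cdot\wt x$ is open in $\wt\mu_\lie{p}^{-1}(0)$ precisely when its class is isolated in $\wt X^{\mathrm{ss}}/\!/G$, that is, precisely when $G\cdot\wt x$ is open in $\wt X^{\mathrm{ss}}$, hence — $\wt X^{\mathrm{ss}}$ being open in $\wt X$ — precisely when $G\cdot\wt x$ is open in $\wt X$; by Lemma~\ref{Lem:Reformulation} this is in turn equivalent to $Q\cdot x$ being open in $X$. Everything is thereby reduced to: if $Q_0$ has an open orbit in $X$, then $Q\cdot x$ is open in $X$ for the point $x$ fixed above.

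For this last step I would proceed in two stages. First I would reverse the route used for $(1)\Rightarrow(3)$ to recover $(1)$: since minimal parabolic subgroups are mutually $K$-conjugate, the union of their open orbits is a non-empty open $G$-invariant set, which therefore meets the dense open set $X_{\gen}$; picking a generic point $x_0$ on such an orbit together with a maximal Abelian subspace through $\mu_\lie{p}(x_0)$ and the regular element $\xi$ furnished by Proposition~\ref{prop:gradientabb}, Lemma~\ref{lemma:FaserQuotienten} identifies $\mu_\lie{p}^{-1}(\mu_\lie{p}(x_0))/M$ with $\wt\mu_\lie{p}^{-1}(\beta)/K_\beta$, and the fact that the associated $G\cdot\wt x_0$ is open translates — via Propositions~\ref{Prop:TransitiveAction} and~\ref{Prop:EquCondition} — into $\mu_\lie{p}$ locally almost separating the $K$-orbits. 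With $(1)$ in hand, the corollary to Proposition~\ref{Prop:EquCondition} gives the separation property on $X_{\gen}$ and Proposition~\ref{Prop:ProofSpecialCase} yields $Q\cdot x$ open for every \emph{generic} $x$ with $\mu_\lie{p}(x)\in\lie a_+$. Second, for an arbitrary (possibly non-generic) $x$ one runs the argument of the first two paragraphs in $\wt X=X\times(G/Q)$ directly: the $G$-equivariant surjection $G/Q_0\twoheadrightarrow G/Q$ carries the open $G$-orbit of $X\times(G/Q_0)$ onto an open $G$-orbit of $\wt X$ contained in $\wt X^{\mathrm{ss}}$, so that the class of $\wt x$ in $\wt X^{\mathrm{ss}}/\!/G$ is isolated, and the homeomorphism $\wt\mu_\lie{p}^{-1}(0)/K\cong\wt X^{\mathrm{ss}}/\!/G$ then forces $K\cdot\wt x$ to be open in $\wt\mu_\lie{p}^{-1}(0)$, completing the proof. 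I expect the genuinely delicate point to be exactly this passage to non-generic $x$ — ensuring that the open $G$-orbit of the shifted manifold is located at $(x,eK_\beta)$ for \emph{every} $x$, and not merely for the generic points to which Proposition~\ref{prop:gradientabb} applies directly — and it is there that the full strength of the norm-square machinery of~\cite{HeSchw} is needed.
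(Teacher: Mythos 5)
Your setup (shift by $\beta=\mu_\lie{p}(x)$, pass to $\wt X=X\times(G/Q)$ with $Q$ the parabolic of $\beta$, identify $\wt\mu_\lie{p}^{-1}(0)$ with $K\times_{K_\beta}\mu_\lie{p}^{-1}(\beta)$, and reduce to isolation of $K\cdot\wt x$ in the zero fibre) is the same as the paper's and is sound. The gap is in the decisive last step, where you write that the $G$-equivariant surjection $G/Q_0\twoheadrightarrow G/Q$ carries the open $G$-orbit of $X\times(G/Q_0)$ onto an open $G$-orbit of $\wt X$, ``so that the class of $\wt x$ in $\wt X^{\mathrm{ss}}/\!/G$ is isolated.'' This is a non sequitur: the open $G$-orbit you produce has, a priori, nothing to do with the specific point $\wt x=(x,eK_\beta)$; it need not contain $\wt x$ in its closure, and the mere \emph{existence} of an open orbit somewhere in the semistable set does not force the image of $\wt x$ to be an isolated point of the quotient. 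You correctly flag this passage as ``the genuinely delicate point,'' but the proof as written does not actually close it.

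The paper's proof fills precisely this hole. It first shows, via the slice theorem at points of $\wt\mu_\lie{p}^{-1}(0)$, that the family of open $G$-orbits in $\mathcal{S}_G\bigl(\wt\mu_\lie{p}^{-1}(0)\bigr)$ is \emph{locally finite}; by real-analyticity their union is also dense. Hence the (finite) union $W$ of those open $G$-orbits having $(x_0,\gamma)$ in their closure already has closure containing a neighbourhood of $(x_0,\gamma)$. It then invokes Corollary~11.18 of~\cite{HeSchw} to get $\ol{W}\cap\wt\mu_\lie{p}^{-1}(0)=K\cdot(x_0,\gamma)$, which is what yields the isolation of $K\cdot(x_0,\gamma)$ in the zero fibre. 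None of these three ingredients — local finiteness of open orbits, density, and the structural Corollary~11.18 — appears in your argument, and without them you cannot pass from ``some open $G$-orbit exists'' to ``$K\cdot\wt x$ is open in the zero fibre.'' Also note that the paper never needs $G\cdot\wt x$ itself to be open: your intermediate reformulation that $K\cdot\wt x$ open in $\wt\mu_\lie{p}^{-1}(0)$ is \emph{equivalent} to $G\cdot\wt x$ open in $\wt X^{\mathrm{ss}}$ is false in the backward direction (an isolated class in $\wt X^{\mathrm{ss}}/\!/G$ only makes the full fibre of the quotient map open, not the single orbit $G\cdot\wt x$), and the proof manages to avoid this equivalence entirely.
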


\begin{proof}
Let $x_0\in X$ be given. We must show that $K_{\mu_\lie{p}(x_0)}\cdot x_0$
is open in $\mu_\lie{p}^{-1}\bigl( \mu_\lie{p}(x_0)\bigr)$. Let
$\gamma:=\mu_\lie{p}(x_0)$ and let $Q$ be the parabolic subgroup of $G$
associated to $\gamma$. Recall that $G/Q\cong K/K_\gamma$ is a $G$-gradient
space with gradient map $kK_\gamma\mapsto -\Ad(k)\gamma$. Consider the shifted
gradient map $\wt{\mu_\lie{p}}\colon X\times(K\cdot\gamma)\to\lie{p}$,
$(x,kK_\gamma)\mapsto x-\Ad(k)\gamma$. Since the minimal parabolic subgroup
$Q_0$ has an open orbit in $X$, the same is true for $Q$. Hence $G$ has an open
orbit in $X\times(K/K_\gamma)$ by Lemma~\ref{Lem:Reformulation}.

By definition of $\gamma$, we have $\wt{\mu}_\lie{p}(x_0,\gamma)=0$. Consider
the set of semistable points $\mathcal{S}_G(\wt\mu_\lie{p}^{-1}(0))=\{\wt x\in
\wt X;\ \ol{G\cdot\wt{x}}\cap\wt{\mu}_\lie{p}^{-1}(0)\neq\emptyset\}$. It
is open in $\wt{X}$ (\cite{HeSt1}) and contains $(x_0,\gamma)$.

By analyticity of the action, the union $V$ of the open $G$-orbits in
$\mathcal{S}_G(\wt{\mu}_\lie{p}^{-1}(0))$ is dense in
$\mathcal{S}_G(\wt{\mu}_\lie{p}^{-1}(0))$. We note also that the union of the
open $G$--orbits is locally finite in $\mathcal{S}_G(\wt{\mu}_\lie{p}^{-1}(0))$
which can be seen as follows. For every $p\in\wt{\mu}_\lie{p}^{-1}(0)$ there
exists a slice neighborhood $G\cdot S\cong G\times_{G_x}S$ where $G_x$ is a
compatible subgroup of $G$ and $S$ can be viewed as an open neighborhood of $0$
in a $G_x$--representation space. Since $G_x$ has at most finitely many open
orbits in this representation space, we conclude that only finitely many open
$G$--orbits intersect the open set $G\cdot S$ which shows that the union of the
open $G$--orbits in $\mathcal{S}_G(\wt{\mu}_\lie{p}^{-1}(0))$ is locally
finite.

Let $W$ be the union of open $G$-orbits which contain $(x_0,\gamma)$ in their
closure and let $\ol{W}$ be the closure of $W$ in
$\mathcal{S}_G(\wt\mu_\lie{p}^{-1}(0))$. Then $W$ consists of only finitely
many open $G$--orbits and consequently $\ol{W}$ contains an open neighborhood
of $(x_0,\gamma)$. By Corollary~11.18 in \cite{HeSchw}, $\ol{W}$ intersects
$\wt{\mu}_\lie{p}^{-1}(0)$ in $K\cdot(x_0,\gamma)$. Therefore
$K\cdot(x_0,\gamma)$ is isolated in $\wt{\mu}_\lie{p}^{-1}(0)$ which shows that
the quotient $\wt{\mu_\lie{p}}^{-1}(0)/K$ is discrete. Then
$\mu_\lie{p}^{-1}(\gamma)/M$ is discrete by Lemma~\ref{lemma:FaserQuotienten}
which means that the $M$--orbits in $\mu_\lie{p}^{-1}(\gamma)$ are open. But
$M<K^\gamma$ so the $K^\gamma$--orbits are open in $\mu_\lie{p}^{-1}(\gamma)$
as well.
\end{proof}

This completes the proof of Theorem~\ref{Thm:Main}.

\begin{cor}
Let $X$ be a spherical $G$--gradient manifold. Then every $G$--stable
real-analytic submanifold $Y$ of $X$ is also spherical.
\end{cor}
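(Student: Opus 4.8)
The plan is to reduce the statement to the main theorem, applied twice --- once to $X$ and once to $Y$. First I would record that $Y$ is itself a $G$--gradient manifold: being a $G$--invariant real-analytic submanifold of $X$, it is a $G$--invariant real-analytic submanifold of $Z$, and the gradient map attached to it is just the restriction $\mu_\lie{p}|_Y\colon Y\to\lie{p}$ of the gradient map of $X$. (If $Y/G$ is not connected, one first passes to the union of the connected components of $Y$ lying in a single $G$--orbit of components; such a union is $G$--stable, has connected orbit space, and it suffices to produce an open $Q_0$--orbit there.)

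Since $X$ is spherical, the implication $(3)\Longrightarrow(2)$ of Theorem~\ref{Thm:Main} applied to $X$ tells us that $\mu_\lie{p}$ almost separates the $K$--orbits in $X$; that is, for every $x\in X$ the orbit $K_{\mu_\lie{p}(x)}\cdot x$ is open in $\mu_\lie{p}^{-1}\bigl(\mu_\lie{p}(x)\bigr)$. The core observation --- and essentially the only content of the argument --- is that this separation property passes to $Y$. Indeed, for $y\in Y$ one has $(\mu_\lie{p}|_Y)^{-1}\bigl(\mu_\lie{p}(y)\bigr)=\mu_\lie{p}^{-1}\bigl(\mu_\lie{p}(y)\bigr)\cap Y$, and since $Y$ is $K$--invariant the orbit $K_{\mu_\lie{p}(y)}\cdot y$ is contained in $Y$. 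As this orbit is open in $\mu_\lie{p}^{-1}\bigl(\mu_\lie{p}(y)\bigr)$ by the previous sentence, it is open in the subspace $\mu_\lie{p}^{-1}\bigl(\mu_\lie{p}(y)\bigr)\cap Y=(\mu_\lie{p}|_Y)^{-1}\bigl(\mu_\lie{p}(y)\bigr)$. Hence $\mu_\lie{p}|_Y$ almost separates the $K$--orbits in $Y$ (with $\Omega=Y$), so in particular it locally almost separates them.

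Finally I would invoke Theorem~\ref{Thm:Main} once more, this time for the $G$--gradient manifold $Y$: condition $(2)$ (equivalently $(1)$) implies condition $(3)$, i.e.\ the minimal parabolic subgroup $Q_0$ of $G$ has an open orbit in $Y$, which is precisely the assertion that $Y$ is spherical. Thus the proof is immediate granting the main theorem; the only point requiring a little care is the standing hypothesis that the relevant orbit space be connected, which is handled by the reduction to $G$--orbits of components of $Y$ mentioned above. I do not expect any genuine obstacle here --- the work is all contained in Theorem~\ref{Thm:Main}.
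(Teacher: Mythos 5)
Your proposal is correct and follows exactly the paper's (much terser) argument: apply the main theorem to $X$ to get the almost-separation property, observe that it restricts to the $K$--invariant submanifold $Y$, and apply the main theorem again to $Y$. Your added care about the connectedness of $Y/G$ and the explicit topological check that openness of $K_{\mu_\lie{p}(y)}\cdot y$ in the fiber passes to the intersection with $Y$ are details the paper leaves implicit.
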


\begin{proof}
The claim follows from the facts that $Y$ is a $G$--gradient manifold with
respect to $\mu_\lie{p}|_Y$ and that $\mu_\lie{p}|_Y$ almost separates the
$K$--orbits in $Y$ since this is true for $\mu_\lie{p}$.
\end{proof}

\begin{cor}
If one $G$--gradient map locally almost separates the $K$--orbits in $X$, then
every $G$--gradient map on $X$ almost separates the $K$--orbits.
\end{cor}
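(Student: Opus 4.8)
The plan is to read the corollary off directly from the main theorem, Theorem~\ref{Thm:Main}, using the fact that its condition~(\ref{main:item:3}) --- sphericity, i.\,e.\ the existence of an open $Q_0$--orbit in $X$ --- depends only on the $G$--action on $X$ and not on any choice of gradient map.

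First I would fix a $G$--gradient map $\mu_\lie{p}\colon X\to\lie{p}$ which locally almost separates the $K$--orbits, so that condition~(\ref{main:item:1}) of Theorem~\ref{Thm:Main} is satisfied. The implication (\ref{main:item:1})$\Rightarrow$(\ref{main:item:3}) then yields that the minimal parabolic subgroup $Q_0$ of $G$ has an open orbit in $X$; in other words, $X$ is a spherical $G$--gradient manifold, and this conclusion makes no reference whatsoever to $\mu_\lie{p}$.

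Next, let $\mu'_\lie{p}\colon X\to\lie{p}$ be an arbitrary $G$--gradient map on $X$; by definition it arises by restriction from a moment map on some K\"ahler manifold containing $X$ as a $G$--invariant real-analytic submanifold, and the standing hypotheses (in particular connectedness of $X/G$) continue to hold. Applying Theorem~\ref{Thm:Main} to the pair $(X,\mu'_\lie{p})$ and using that $X$ is spherical, the implication (\ref{main:item:3})$\Rightarrow$(\ref{main:item:2}) shows that $\mu'_\lie{p}$ almost separates the $K$--orbits in $X$. Since $\mu'_\lie{p}$ was arbitrary, this is exactly the assertion of the corollary; it also records the remark from the introduction that sphericity is independent of the particular gradient map.

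The point is that there is essentially nothing to prove beyond adopting this viewpoint, and the only thing to keep in mind is that the real-reductive group $G=K\exp(\lie{p})$, and hence the common target $\lie{p}$ of all the gradient maps in question, is fixed once and for all, so that \emph{$X$ is spherical} is an intrinsic property of the $G$--action and the equivalence (\ref{main:item:1})$\Leftrightarrow$(\ref{main:item:2})$\Leftrightarrow$(\ref{main:item:3}) may be invoked separately for each gradient map. In particular, the strengthening from ``locally almost separates'' (on some open subset) to ``almost separates'' (on all of $X$) is already contained in the equivalence (\ref{main:item:1})$\Leftrightarrow$(\ref{main:item:2}).
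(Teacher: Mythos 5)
Your argument is correct and is exactly the intended one: the paper states this corollary without proof precisely because it follows from applying Theorem~\ref{Thm:Main} twice, first (\ref{main:item:1})$\Rightarrow$(\ref{main:item:3}) to the given gradient map to conclude that $X$ is spherical (a property of the $G$--action alone), and then (\ref{main:item:3})$\Rightarrow$(\ref{main:item:2}) to an arbitrary gradient map. Nothing is missing.
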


\section{Applications}

\subsection{Homogeneous semi-stable spherical gradient manifolds}

Let $G=K\exp(\lie{p})$ be connected real-reductive and let $X$ be a spherical
$G$--gradient manifold with gradient map $\mu_\lie{p}\colon X\to\lie{p}$. We
have seen in Lemma~\ref{Lem:OpenGOrbits} that $G$ has an open orbit in $X$. In
this subsection we consider the case that $X=G/H$ is homogeneous. In addition,
we suppose that $X$ is semi-stable, i.\,e.\ that $X=\mathcal{S}_G(
\mathcal{M}_\lie{p})$ holds. Consequently, we may assume that $H$ is of the
form $H=K_H\exp(\lie{p}_H)$ with $K_H=K\cap H$ and
$\lie{p}_H=\lie{p}\cap\lie{h}$.

\begin{rem}
The class of homogeneous semi-stable spherical gradient manifolds generalizes
the class of homogeneous affine spherical varieties in the complex setting.
\end{rem}

Let $\lie{p}=\lie{p}_H\oplus\lie{p}_H^\perp$ be a $K_H$--invariant
decomposition; then we have the Mostow decomposition $G/H\cong K\times_{K_H}
\lie{p}_H^\perp$ (see Theorem~9.3 in~\cite{HeSchw} for a proof which uses
gradient maps). Since $X$ is spherical, we conclude from Theorem~\ref{Thm:Main}
that the Mostow gradient map $\mu_\lie{p}\colon G/H\cong
K\times_{K_H}\lie{p}_H^\perp\to\lie{p}$, $[k,\xi]\mapsto\Ad(k)\xi$, almost
separates the $K$--orbits. In other words, the inclusion
$\lie{p}_H^\perp\hookrightarrow\lie{p}$ induces a map
$\lie{p}_H^\perp/K_H\to\lie{p}/K$ which has discrete fibers. This discussion
proves the following

\begin{prop}
Let $X=G/H$ be a semi-stable homogeneous $G$--gradient manifold and suppose
that $H=K_H\exp(\lie{p}_H)$ is compatible in $G=K\exp(\lie{p})$. Then $X$ is
spherical if and only if the map $\lie{p}_H^\perp/K_H\to\lie{p}/K$ induced by
the inclusion $\lie{p}_H^\perp\hookrightarrow\lie{p}$ has discrete fibers.
\end{prop}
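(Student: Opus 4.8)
The plan is to deduce this proposition directly from the main theorem together with the Mostow decomposition of $G/H$. First I would record the standing hypotheses: $X = G/H$ is a semi-stable homogeneous $G$--gradient manifold and $H = K_H\exp(\lie{p}_H)$ is a compatible subgroup, so that by the Mostow decomposition (Theorem~9.3 in~\cite{HeSchw}) one has a $K$--equivariant diffeomorphism $G/H \cong K\times_{K_H}\lie{p}_H^\perp$, under which the gradient map becomes $\mu_\lie{p}([k,\xi]) = \Ad(k)\xi$. This identification is the key structural input, and the rest is essentially a translation of ``almost separates the $K$--orbits'' through it.

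Next I would invoke Theorem~\ref{Thm:Main}: $X$ is spherical if and only if $\mu_\lie{p}$ almost separates the $K$--orbits, i.e.\ the induced map $\ol{\mu}_\lie{p}\colon X/K \to \lie{p}/K$ has discrete fibers. Using the Mostow description, $X/K = (K\times_{K_H}\lie{p}_H^\perp)/K \cong \lie{p}_H^\perp/K_H$, and under this identification $\ol{\mu}_\lie{p}$ becomes exactly the map $\lie{p}_H^\perp/K_H \to \lie{p}/K$ induced by the inclusion $\lie{p}_H^\perp\hookrightarrow\lie{p}$ (since $\Ad(k)\xi$ and $\xi$ lie in the same $K$--orbit in $\lie{p}$). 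Hence ``$\ol{\mu}_\lie{p}$ has discrete fibers'' is literally the statement that the inclusion-induced map $\lie{p}_H^\perp/K_H\to\lie{p}/K$ has discrete fibers. Combining these two equivalences yields the claim.

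The step I expect to require the most care is verifying that the quotient map $\lie{p}_H^\perp/K_H \to \lie{p}/K$ is genuinely the same map as $\ol{\mu}_\lie{p}$ under the Mostow identification; this amounts to checking that two points $\xi,\eta\in\lie{p}_H^\perp$ have the same image in $\lie{p}/K$ precisely when $[k,\xi]$ and $[k',\eta]$ lie in the same $K$--orbit in $K\times_{K_H}\lie{p}_H^\perp$, which follows because $K\cdot[e,\xi]$ meets $\{e\}\times\lie{p}_H^\perp$ in the $K_H$--orbit of $\xi$. One should also note explicitly that the semi-stability assumption $X = \mathcal{S}_G(\mathcal{M}_\lie{p})$ is what allows $H$ to be taken compatible of the stated form, so that the Mostow decomposition applies in the first place. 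Given these remarks, the proof is the short argument already sketched in the paragraph preceding the statement, and there is no substantial obstacle beyond this bookkeeping.
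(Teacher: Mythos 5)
Your proof is correct and follows essentially the same route as the paper: Mostow decomposition $G/H\cong K\times_{K_H}\lie{p}_H^\perp$ with the gradient map $[k,\xi]\mapsto\Ad(k)\xi$, then an application of the main theorem, then the translation of ``$\ol{\mu}_\lie{p}$ has discrete fibers'' into the statement about $\lie{p}_H^\perp/K_H\to\lie{p}/K$. The paper's argument is exactly this, stated more tersely in the paragraph preceding the proposition; your additional remarks on the identification $X/K\cong\lie{p}_H^\perp/K_H$ and on the role of semi-stability are accurate bookkeeping but do not change the approach.
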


\begin{ex}
For $H=\{e\}$ we have $K_H=\{e\}$ and $\lie{p}_H^\perp=\lie{p}$. Consequently,
$X=G$ is spherical if and only if the quotient map $\lie{p}\to\lie{p}/K$ has
discrete fibers, i.\,e.\ if and only if $K$ acts trivially on $\lie{p}$.
\end{ex}

Finally, we show that reductive symmetric spaces are spherical. Recall that
$G/H$ is a reductive symmetric space if there is an involutive automorphism
$\tau$ on $G$ such that $(G^\tau)^0\subset H\subset G^\tau$ holds. In this
situation we may assume without loss of generality that $\tau$ commutes with
the Cartan involution $\theta$. Hence, $H=K^\tau\exp(\lie{p}^\tau)$ is
compatible. In order to show that $X=G/H$ is spherical, we must prove that
$\lie{p}^{-\tau}/K^\tau\to\lie{p}/K$ has discrete fibers. From
$[\lie{p}^{-\tau},\lie{p}^{-\tau}]\subset\lie{k}^\tau$ we conclude that every
$K^\tau$--orbit in $\lie{p}^{-\tau}$ intersects a maximal Abelian subspace
$\lie{a}_0\subset\lie{p}^{-\tau}$ in an orbit of the finite group
$W_0:=\mathcal{N}_{K^\tau}(\lie{a}_0)/\mathcal{Z}_{K^\tau}(\lie{a}_0)$.
Extending $\lie{a}_0$ to a maximal Abelian subspace $\lie{a}$ of $\lie{p}$ we
see that $\lie{p}^{-\tau}/K^\tau\cong\lie{a}_0/W_0\to\lie{a}/W\cong\lie{p}/K$
has indeed finite fibers. Therefore we have proven the following

\begin{prop}
Let $X=G/H$ be a semi-stable homogeneous gradient manifold. If $H$ is a
symmetric subgroup of $G$, then the Mostow gradient map $\mu_\lie{p}\colon X
\to\lie{p}$ has finite fibers.
\end{prop}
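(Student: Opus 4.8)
The plan is to translate the statement, via the previous proposition, into a purely Lie-algebraic assertion about the inclusion-induced map on orbit spaces, and then to evaluate that map using the structure of the orthogonal symmetric pair $(\lie{g},\tau)$.

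First I would normalize the data. Since $H$ is a symmetric subgroup, fix an involutive automorphism $\tau$ of $G$ with $(G^\tau)^0\subseteq H\subseteq G^\tau$; after replacing $\tau$ by a suitable conjugate --- a standard averaging argument produces a Cartan involution commuting with $\tau$ --- I may assume $\tau\theta=\theta\tau$. Then $\tau$ preserves the Cartan decomposition, so $H=K^\tau\exp(\lie{p}^\tau)$ is compatible in $G=K\exp(\lie{p})$ with $K_H=K^\tau$, $\lie{p}_H=\lie{p}^\tau$ and $\lie{p}_H^\perp=\lie{p}^{-\tau}$. By the previous proposition it then suffices to show that the map $\lie{p}^{-\tau}/K^\tau\to\lie{p}/K$ induced by $\lie{p}^{-\tau}\hookrightarrow\lie{p}$ has finite fibers; this is stronger than the discreteness required for sphericity and is what the asserted finiteness of the fibers of $\mu_\lie{p}$ amounts to.

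The structural input is the relation $[\lie{p}^{-\tau},\lie{p}^{-\tau}]\subseteq\lie{k}^\tau$, which exhibits $\lie{k}^\tau\oplus\lie{p}^{-\tau}$ as a $\theta$-stable (hence reductive) subalgebra of $\lie{g}$ with Cartan decomposition into its $\lie{k}$- and $\lie{p}$-parts. The restricted-root theory of Chapter~VII of~\cite{Kn} therefore applies to the compatible subgroup $K^\tau\exp(\lie{p}^{-\tau})$: fixing a maximal Abelian subspace $\lie{a}_0\subseteq\lie{p}^{-\tau}$, every $K^\tau$-orbit in $\lie{p}^{-\tau}$ meets $\lie{a}_0$ in a single orbit of the finite group $W_0:=\mathcal{N}_{K^\tau}(\lie{a}_0)/\mathcal{Z}_{K^\tau}(\lie{a}_0)$, so that $\lie{p}^{-\tau}/K^\tau\cong\lie{a}_0/W_0$. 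In the same way $\lie{p}/K\cong\lie{a}/W$ for a maximal Abelian $\lie{a}\subseteq\lie{p}$ and the Weyl group $W=\mathcal{N}_K(\lie{a})/\mathcal{Z}_K(\lie{a})$; since $\lie{a}_0$ is Abelian in $\lie{p}$ I would extend it to such an $\lie{a}$. Under these identifications the map under study becomes the one $\lie{a}_0/W_0\to\lie{a}/W$ induced by $\lie{a}_0\hookrightarrow\lie{a}$.

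The conclusion is then immediate: the fiber over $W\cdot v$ is the image in $\lie{a}_0/W_0$ of the set $\lie{a}_0\cap(W\cdot v)$, which is finite because $W$ is finite. Hence every fiber is finite, proving the proposition. The only steps needing genuine care are the reduction to $\tau\theta=\theta\tau$ and the recognition that $\lie{k}^\tau\oplus\lie{p}^{-\tau}$ is a compatible reductive subalgebra --- this is what makes the Cartan-subspace description of $\lie{p}^{-\tau}/K^\tau$ available; everything after that is formal.
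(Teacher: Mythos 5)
Your proposal is correct and follows essentially the same route as the paper: reduce via the preceding proposition to the inclusion-induced map on orbit spaces, observe that $[\lie{p}^{-\tau},\lie{p}^{-\tau}]\subset\lie{k}^\tau$ so that $\lie{p}^{-\tau}/K^\tau\cong\lie{a}_0/W_0$ by restricted-root theory for the compatible pair $(K^\tau,\lie{p}^{-\tau})$, extend $\lie{a}_0$ to a maximal Abelian $\lie{a}\subset\lie{p}$, and read off finiteness from $\lie{a}_0/W_0\to\lie{a}/W$ with $W$ finite. Your added remarks --- averaging to achieve $\tau\theta=\theta\tau$, and noting explicitly that $\lie{k}^\tau\oplus\lie{p}^{-\tau}$ is a $\theta$-stable reductive subalgebra so that the Cartan-subspace machinery applies --- merely make explicit what the paper leaves implicit; they do not constitute a different argument.
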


\subsection{Relation to multiplicity-free representations}

Let $X$ be a real-analytic $G$--gradient manifold. Then $G$ acts linearly on
the space ${\mathcal{C}}^\omega(X)$ of complex-valued real-analytic functions
on $X$. Since $G$ is a compatible subgroup of some complex-reductive group
$U^\mbb{C}$, we observe that $G$ embeds as a closed subgroup into its
complexification $G^\mbb{C}$. Moreover, if $G$ contains no non-compact Abelian
factors, then $G^\mbb{C}$ is complex-reductive.

\begin{prop}\label{Prop:Multfree}
Suppose that $G$ acts properly on $X$ and that $G^\mbb{C}$ is
complex-reductive. If the $G$--representation on ${\mathcal{C}}^\omega(X)$ is
multiplicity-free, then $X$ is spherical.
\end{prop}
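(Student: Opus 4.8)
The plan is to reduce the statement to the complex-algebraic characterisation of spherical varieties and then to transport the conclusion back to $X$ by the same argument already used, after the definition of sphericity, for maximally totally real submanifolds of spherical complex manifolds. Since $G$ acts properly on $X$ and $G^\mbb{C}$ is complex-reductive, $X$ admits a complexification in the sense of the complexification theorems for proper real-analytic actions (as in the work of Heinzner and collaborators): there is a Stein $G^\mbb{C}$--manifold $X^\mbb{C}$, each of whose connected components meets $X$, together with a $G$--equivariant real-analytic embedding $X\hookrightarrow X^\mbb{C}$ onto a closed, maximally totally real submanifold, so that $\dim_\mbb{C}X^\mbb{C}=\dim_\mbb{R}X$ and the $G$--action extends holomorphically to the given $G^\mbb{C}$--action. (One may realise $X^\mbb{C}$ as a $G$--invariant Grauert tube of a $G$--invariant real-analytic Riemannian metric on $X$; properness is used precisely to produce such a metric together with a $G$--invariant tube.) Restriction of holomorphic functions then gives a $G$--equivariant algebra homomorphism $\rho\colon\mathcal{O}(X^\mbb{C})\to\mathcal{C}^\omega(X)$, and $\rho$ is injective because $X$ is a uniqueness set for holomorphic functions on each component of $X^\mbb{C}$.

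Next I would transfer multiplicity-freeness. Through $\rho$ the space $\mathcal{O}(X^\mbb{C})$ is a $G$--subrepresentation of the multiplicity-free representation $\mathcal{C}^\omega(X)$, hence is itself multiplicity-free as a $G$--representation. Since $G$ is Zariski dense in $G^\mbb{C}$ and acts holomorphically on $X^\mbb{C}$, hence algebraically on every finite-dimensional $G$--invariant subspace of $\mathcal{O}(X^\mbb{C})$, a finite-dimensional $G^\mbb{C}$--submodule of $\mathcal{O}(X^\mbb{C})$ is $G^\mbb{C}$--irreducible if and only if it is $G$--irreducible, and two such are $G^\mbb{C}$--isomorphic if and only if they are $G$--isomorphic. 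Therefore the $G^\mbb{C}$--finite part of $\mathcal{O}(X^\mbb{C})$ is multiplicity-free as a $G^\mbb{C}$--representation, and the Vinberg--Kimelfeld criterion (the algebraic counterpart of Brion's theorem) shows that a Borel subgroup $B\subset G^\mbb{C}$ has an open orbit in $X^\mbb{C}$; that is, $X^\mbb{C}$ is a spherical $G^\mbb{C}$--manifold.

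It then remains to descend to $X$. As in the totally real example above, the complexification $Q_0^\mbb{C}=M^\mbb{C}A^\mbb{C}N^\mbb{C}$ of the minimal parabolic subgroup $Q_0=MAN$ is a parabolic subgroup of $G^\mbb{C}$, so after conjugating we may assume $B\subset Q_0^\mbb{C}$; since $B$ has an open orbit in $X^\mbb{C}$, so does $Q_0^\mbb{C}$. Because $X$ is maximally totally real in $X^\mbb{C}$ it cannot be contained in the complement of the open $Q_0^\mbb{C}$--orbit, so there is a point $x\in X$ with $Q_0^\mbb{C}\cdot x$ open in $X^\mbb{C}$; then $Q_0\cdot x$ is open in $(Q_0^\mbb{C}\cdot x)\cap X$, hence open in $X$. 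Thus $X$ is spherical, and by Theorem~\ref{Thm:Main} the gradient map $\mu_\lie{p}$ almost separates the $K$--orbits.

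The two delicate points are the construction of the complexification from properness and the multiplicity-free transfer. For the first one must pin down the precise Heinzner-type statement: a $G$--invariant Stein tube on which the $G$--action genuinely extends to the complexification $G^\mbb{C}$, with holomorphic functions restricting injectively to $X$. For the second one must read ``multiplicity-free'' for the $G$--representation on $\mathcal{C}^\omega(X)$ at the level of $G$--finite vectors, so that it passes to subrepresentations and is compatible with the $G^\mbb{C}$--isotypic decomposition, and one needs a form of the Vinberg--Kimelfeld theorem valid for the a priori merely Stein (not affine) $G^\mbb{C}$--manifold $X^\mbb{C}$ — for instance by passing to the affine model $\mathrm{Spec}$ of the algebra of $G^\mbb{C}$--finite holomorphic functions, or by invoking a multiplicity-freeness criterion directly for Stein $G^\mbb{C}$--manifolds. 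I expect this last step to be the main obstacle.
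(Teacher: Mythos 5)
Your argument is essentially the same as the paper's: complexify $X$ to a Stein $G^\mbb{C}$--manifold $X^\mbb{C}$ using Heinzner's equivariant complexification theorem, transfer multiplicity-freeness through the injective $G$--equivariant restriction $\mathcal{O}(X^\mbb{C})\to\mathcal{C}^\omega(X)$, conclude that $X^\mbb{C}$ is spherical, and then descend via the maximally-totally-real argument already recorded in the examples section. The step you flag as ``the main obstacle''---a Vinberg--Kimelfeld-type criterion valid for Stein rather than affine $G^\mbb{C}$--manifolds---is exactly what the paper cites: Theorem~2 of Akhiezer and Heinzner, \emph{Spherical Stein spaces} (\cite{AkHe}), which states that a Stein $G^\mbb{C}$--space with multiplicity-free $\mathcal{O}(X^\mbb{C})$ is spherical, so no passage to $\mathrm{Spec}$ of the $G^\mbb{C}$--finite functions is needed. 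With that reference in hand your proof is complete and coincides with the paper's.
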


\begin{proof}
As is proven in~\cite{He}, there exists a Stein $G^\mbb{C}$--manifold
$X^\mbb{C}$ such that $X$ admits a $G$--equivariant embedding as a closed
maximally totally real submanifold into $X^\mbb{C}$. According to the example
discussed in Section~\ref{Section:Ex} it suffices to show that $X^\mbb{C}$ is
$G^\mbb{C}$--spherical.

In order to see this, note that the restriction mapping $\mathcal{O}(X^\mbb{C})
\to\mathcal{C}^\omega(X)$ is injective and $G$--equivariant. This implies that
the $G$-- (and hence also the $G^\mbb{C}$--)representation on
$\mathcal{O}(X^\mbb{C})$ is multiplicity-free. Therefore, Theorem~2
in~\cite{AkHe} applies to show that $X^\mbb{C}$ is spherical which finishes the
proof.
\end{proof}

\begin{rem}
In Proposition~\ref{Prop:Multfree} properness of the $G$--action on $X$ is
needed to guarantee the existence of the complexification $X^\mbb{C}$. If
$X=G/H$ is homogeneous, then we may take $X^\mbb{C}:=G^\mbb{C}/H^\mbb{C}$ and
the same argument as above shows: If the $G$--representation on
$\mathcal{C}^\omega(G/H)$ is multiplicity-free, then $G/H$ is spherical.
\end{rem}

Even if we assume that $G$ acts properly on $X$, the converse of
Proposition~\ref{Prop:Multfree} does not hold as the following example shows.

\begin{ex}
Let $G=K$ be a compact Lie group acting by left multiplication on $X=K$. Then
$\mu_\lie{p}\equiv0$ separates the $K$--orbits in $X$ but the
$K$--representation on $\mathcal{C}^\omega(K)$ is not multiplicity-free which
can be deduced from the Peter-Weyl Theorem.
\end{ex}

However, there is a special class of real-reductive Lie groups for which the
proof of the complex multiplicity-freeness result generalizes to the real
situation. A real-reductive Lie group $G$ belongs to this class if the minimal
parabolic subalgebras $\lie{q}_0=\lie{m}\oplus\lie{a}\oplus\lie{n}$ are
solvable, i.\,e.\ if $\lie{m}$ is Abelian.

\begin{ex}
Among the classical semi-simple Lie groups this is the case e.\,g.\ for
${\rm{SL}}(n,\mbb{R})$, ${\rm{Sp}}(n,\mbb{R})$, ${\rm{SU}}(p,p)$,
${\rm{SO}}(p,p)$ and ${\rm{SO}}(p,p+1)$ (see Appendix~C.3 in~\cite{Kn}).
\end{ex}

\begin{lem}
Let $X$ be a spherical $G$--gradient manifold. If the minimal parabolic
subalgebras of $\lie{g}$ are solvable, then the $G$--representation on
$\mathcal{C}^\omega(X)$ is multiplicity-free.
\end{lem}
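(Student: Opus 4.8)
The plan is to reduce the real statement to the classical complex multiplicity-freeness theorem via a Frobenius reciprocity argument applied to the minimal parabolic $Q_0 = MAN$, exploiting the hypothesis that $\lie{m}$ is Abelian so that $Q_0$ itself is solvable and plays the role of a Borel subgroup. Since $X$ is spherical, by definition $Q_0$ has an open orbit $\mathcal{O} = Q_0 \cdot x_0$ in $X$; because $Q_0$ is closed this orbit is a locally closed submanifold, and restriction gives a $Q_0$-equivariant injection $\mathcal{C}^\omega(X) \hookrightarrow \mathcal{C}^\omega(\mathcal{O})$ onto the image in functions on the open dense orbit. The first step is to identify $\mathcal{O} \cong Q_0/(Q_0 \cap G_{x_0})$ and to observe that since $Q_0 = MAN$ is solvable (here is where solvability of $\lie{m}$ enters) and connected up to the components of $M$, its real-analytic functions on a homogeneous space decompose with at most one-dimensional pieces under the abelian quotient $Q_0/[Q_0,Q_0]$ — more precisely one wants to bound, for each irreducible $G$-type $V_\pi$, the dimension of $\Hom_G(V_\pi, \mathcal{C}^\omega(X))$ by the dimension of a space of $Q_0$-semiinvariant vectors in $V_\pi^*$, which is at most one.

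Concretely, I would argue as follows. Let $V_\pi$ be an irreducible $G$-representation (or better, since $G$ may be noncompact, work with the relevant admissible category or with finite-dimensional types occurring in $\mathcal{C}^\omega(X)$). By Frobenius-type reciprocity together with the fact that $\mathcal{O}$ is open and dense, $\Hom_G(V_\pi, \mathcal{C}^\omega(X))$ injects into the space of $G_{x_0}$-fixed vectors in an appropriate dual, which in turn, using that $Q_0$ acts with open orbit and $Q_0 \cap G_{x_0}$ is the isotropy, is controlled by the space of vectors in $V_\pi^*$ that transform under $Q_0$ by the character through which $Q_0$ acts on the "density bundle" or on functions along $\mathcal{O}$. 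Since $Q_0/[Q_0,Q_0]$ is abelian (this uses $\lie{m}$ abelian, so $[\lie{q}_0,\lie{q}_0] = [\lie{m}\oplus\lie{a}\oplus\lie{n}, \lie{m}\oplus\lie{a}\oplus\lie{n}] \supseteq \lie{n}$ and the quotient is $\lie{m}\oplus\lie{a}$ which is abelian, while $N$ acts trivially on any one-dimensional quotient), a $\lie{q}_0$-highest-weight-type argument shows that the space of such semiinvariant vectors in an irreducible is at most one-dimensional. Hence each $G$-type occurs with multiplicity at most one, i.e. $\mathcal{C}^\omega(X)$ is multiplicity-free.

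The main obstacle I anticipate is making the Frobenius reciprocity / "open orbit forces multiplicity one" argument rigorous in the real-analytic, possibly non-homogeneous, non-compact setting: in the complex algebraic case one uses rational functions and the fact that a $B$-eigenfunction on the open orbit extends, but here one must be careful about which function space to use and about convergence. I would handle this by mimicking the proof in the references for the complex case — presumably the argument of \cite{AkHe} or the multiplicity-free criterion via open orbits of a solvable group — and by first reducing to the homogeneous situation: by Lemma~\ref{Lem:OpenGOrbits} $G$ has an open orbit $G \cdot x_0$, which is dense in each connected component meeting it, so $\mathcal{C}^\omega(X)$ injects $G$-equivariantly into $\mathcal{C}^\omega(G/G_{x_0})$, and on a homogeneous space one has honest Frobenius reciprocity. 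Then the $Q_0$-open-orbit statement inside $G/G_{x_0}$ plus solvability of $Q_0$ gives the one-dimensionality of the relevant $\Hom$-spaces.

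A second, more technical point is the passage between "$Q_0$ has an open orbit" and the existence of a suitable semiinvariant: one needs that the isotropy $Q_0 \cap G_{x_0}$ contains a subgroup making the quotient character land in the abelianization, and that $N \subset [Q_0,Q_0]$ acts trivially on the relevant line; both follow from the root-space structure recalled in the section on minimal parabolics, so I would cite that and the standard structure theory in \cite{Kn} rather than redo it. Once these structural facts are in place, the proof is essentially the solvable-group version of Vinberg--Kimelfeld's criterion, and the remaining steps are routine.
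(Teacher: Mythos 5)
Your proposal has the right core idea --- solvability of $Q_0$ plus an open $Q_0$--orbit should force multiplicity one, in the spirit of the Vinberg--Kimelfeld criterion --- but the route you sketch differs from the paper's and contains a genuine unresolved gap. The paper does not restrict to an orbit and does not invoke Frobenius reciprocity at all. Instead it argues directly on $X$: by Engel's theorem $V^N\neq0$, restriction gives an injection $\Hom_G\bigl(V,\mathcal{C}^\omega(X)\bigr)\hookrightarrow\Hom_{MA}\bigl(V^N,\mathcal{C}^\omega(X)^N\bigr)$ (injective because $V^N$ generates $V$), and if the latter space had dimension $\geq2$ one would obtain two linearly independent $N$--invariant functions $f_1,f_2$ transforming by the same character of the abelian group $M^0A$; the quotient $f_1/f_2$ is then a non-constant real-analytic $Q_0^0$--invariant function on the dense open set $\{f_2\neq0\}$, which contradicts the existence of an open $Q_0$--orbit. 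This is more elementary than your sketch and avoids precisely the technical issue you yourself flag.

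The concrete gap in your argument is twofold. First, the reduction ``$\mathcal{C}^\omega(X)\hookrightarrow\mathcal{C}^\omega(G/G_{x_0})$'' requires the open $G$--orbit to be \emph{dense}, and you assert it is ``dense in each connected component meeting it''; this is false in the real setting, as the paper stresses in the introduction and illustrates with $\mathrm{SL}(2,\mathbb{R})$ acting on $\mathbb{P}_1(\mathbb{C})$, where there are two disjoint open $Q_0$--orbits. Second, the Frobenius-reciprocity step for $\mathcal{C}^\omega$ of a non-compact, possibly non-homogeneous real-analytic space is not available off the shelf, and you correctly identify this as ``the main obstacle'' but leave it unresolved; in particular, your promised bound on $\Hom_G\bigl(V_\pi,\mathcal{C}^\omega(X)\bigr)$ by $Q_0$--semiinvariants in $V_\pi^*$ is precisely what one would need to prove, not cite. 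The paper's ratio-of-eigenfunctions argument is the device that circumvents both problems: it produces a $Q_0^0$--invariant function on a dense open set of $X$ itself, so that density of the $G$--orbit is irrelevant and no homogeneous-space harmonic analysis is needed --- only the analyticity of $f_1/f_2$ and the presence of a single open $Q_0$--orbit. If you want to make your version rigorous you would essentially have to rediscover this trick (or, as you suggest, carefully transport the complex argument of Akhiezer--Heinzner), at which point you would have reproduced the paper's proof.
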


\begin{proof}
We must show that $\dim\Hom_G\bigl(V,\mathcal{C}^\omega(X)\bigr)\leq1$ holds
for every complex finite-dimensional irreducible $G$--module $V$. Let $Q_0=MAN$
be a minimal parabolic subgroup of $G$ and let $V$ be a complex
finite-dimensional irreducible $G$--module. By Engel's Theorem the space $V^N$
of $N$--invariant vectors has positive dimension. The restriction map induces a
linear map
\begin{equation*}
\Hom_G\bigl(V,\mathcal{C}^\omega(X)\bigr)\to\Hom_{MA}\bigl(V^N,
\mathcal{C}^\omega(X)^N\bigr),
\end{equation*}
which is injective since $V^N$ generates $V$ as a $G$--module. Hence, it is
enough to show $\dim\Hom_{MA}\bigl(V^N,\mathcal{C}^\omega(X)^N\bigr)\leq1$. Let
us assume the contrary. Then there are linearly independent functions $f_1,f_2
\in\mathcal{C}^\omega(X)^N$ which transform under the same character of the
Abelian group $M^0A$. Consequently, the quotient $f_1/f_2$ is a real-analytic
function defined on the dense open set $\{f_2\not=0\}$ and invariant under
$Q_0^0=M^0AN$. Since this contradicts the assumption that $Q_0$ has an open
orbit in $X$, the proof is finished.
\end{proof}

\subsection{Open Borel-orbits are Stein}

In this subsection we consider the holomorphic situation, i.\,e.\ $G=U^\mbb{C}$
is complex-reductive and acts holomorphically on the K\"ahler manifold $Z$ such
that the $U$--action is Hamiltonian with moment map $\mu\colon Z\to\lie{u}^*$.
In Section~\ref{section:proof} we have given a new proof of the following
result of Brion.

\begin{thm}
The moment map $\mu\colon Z\to\lie{u}^*$ separates the $U$--orbits in $Z$ if
and only if $Z$ is spherical, i.\,e.\ if a Borel subgroup $B\subset G$ has an
open orbit in $Z$.
\end{thm}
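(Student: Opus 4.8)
The plan is to read the statement off Theorem~\ref{Thm:Main} by specializing to $G=U^\mbb{C}$. First I would note that $U^\mbb{C}$ is itself a compatible ($\theta$-stable, closed) subgroup of $U^\mbb{C}$, with $K=U$ and $\lie{p}=\im\lie{u}$, so that $Z$ is a $U^\mbb{C}$-gradient manifold and its gradient map $\mu_\lie{p}$ is exactly $\mu$ under the identification $\lie{u}^*\cong\im\lie{u}$ used throughout the paper. Under the standing hypothesis that $Z/U^\mbb{C}$ is connected, and since $U^\mbb{C}$ is connected complex-reductive, its minimal parabolic subgroups are precisely the Borel subgroups, as recalled in Section~3. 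Theorem~\ref{Thm:Main} then says directly: $\mu$ locally almost separates the $U$-orbits if and only if some Borel subgroup $B\subset U^\mbb{C}$ has an open orbit in $Z$, i.e.\ if and only if $Z$ is spherical. One implication of the corollary is now immediate: if $\mu$ separates the $U$-orbits then $\mu^{-1}\bigl(U\cdot\mu(z)\bigr)=U\cdot z$ for all $z$, so $\mu$ in particular locally almost separates them (take $\Omega=Z$), whence $Z$ is spherical.

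For the converse I would feed in one extra ingredient that is special to the holomorphic complex-reductive situation, namely the connectedness of the fibers of $\mu$ (recorded in Section~\ref{subsection:coadjoint}, cf.\ \cite{HeHu}). Suppose $Z$ is spherical. By Theorem~\ref{Thm:Main} the map $\mu$ almost separates the $U$-orbits, so for every $z\in Z$ the orbit $U_{\mu(z)}\cdot z$ is open in the fiber $\mu^{-1}\bigl(\mu(z)\bigr)$. Since $U$, and therefore the isotropy group $U_{\mu(z)}$, is compact, this orbit is also closed in $\mu^{-1}\bigl(\mu(z)\bigr)$; as the fiber is connected, I conclude $\mu^{-1}\bigl(\mu(z)\bigr)=U_{\mu(z)}\cdot z$. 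This is precisely the statement that $\mu$ separates the $U$-orbits, which completes the proof.

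The only delicate point is this last passage from the ``discrete fibers'' output of Theorem~\ref{Thm:Main} to genuine injectivity of $Z/U\hookrightarrow\lie{u}/U$: it rests entirely on connectedness of the $\mu$-fibers, a phenomenon that fails for general real-reductive $G$, as the $\mathrm{SL}(2,\mbb{R})$-action on $\mbb{P}_1(\mbb{C})$ in Section~\ref{subsection:coadjoint} shows, where $\mu_\lie{p}$ almost but not globally separates the $K$-orbits. Everything else is a translation of the language of gradient maps into that of moment maps and requires no new work.
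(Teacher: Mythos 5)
Your proposal is correct and matches the paper's intended argument: the paper offers no separate proof here but simply points back to Section~5, i.e.\ it reads the statement off Theorem~\ref{Thm:Main} specialized to $G=U^{\mathbb C}$ (where minimal parabolics are Borel subgroups and $\mu_{\mathfrak p}=\mu$), exactly as you do. Your upgrade from ``almost separates'' to ``separates'' via compactness of $U_{\mu(z)}$ plus connectedness of the $\mu$-fibers is precisely the ingredient the paper records in Section~3.2 (citing \cite{HeHu}), and you correctly flag it as the one point that is special to the complex-reductive case.
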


In this subsection we will show that our proof further implies that the open
$B$--orbit in $Z$ is Stein.

\begin{prop}
If the moment map $\mu\colon Z\to\lie{u}^*$ separates the $U$--orbits in $Z$,
then the open $B$--orbit in $Z$ is Stein.
\end{prop}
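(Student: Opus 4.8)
The plan is to exploit the explicit description of the open Borel orbit that comes out of the proof of the main theorem. Recall that in Section~\ref{section:proof} we did not merely assert the existence of an open $B$--orbit: for a generic point $x_0\in Z_{\gen}$ with $\mu_\lie{p}(x_0)$ in the closed Weyl chamber we produced a regular element $\xi$, formed the shifted gradient map $\wt{\mu}_\lie{p}$ on $Z\times(K/M)$ with $\wt{x}_0:=(x_0,eM)$ a global minimum of $\norm{\wt{\mu}_\lie{p}}^2$, and showed that $G\cdot\wt{x}_0$ is open in $\wt{Z}=Z\times(G/Q_0)$. In the complex case $G=U^\mbb{C}$ the minimal parabolic $Q_0=MAN$ is a Borel subgroup $B$, and the identification $G\times_B Z\cong Z\times(G/B)$ carries $B\cdot x_0$ (open in $Z$) onto $G\cdot\wt{x}_0$. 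So it suffices to show that the open $G$--orbit $G\cdot\wt{x}_0$ in $\wt{Z}$, equivalently $B\cdot x_0$ in $Z$, is Stein.

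The key point is that $\wt{x}_0$ is a global minimum of $\norm{\wt{\mu}_\lie{p}}^2$ and lies in $\wt{\mu}_\lie{p}^{-1}(\beta)$ with $\beta=\mu_\lie{p}(x_0)-\xi$. First I would recall from the theory in~\cite{HeSchw} (and the semistability results in~\cite{HeSt1}) that the open $G$--orbit through a point of $\mathcal{M}_\lie{p}$, the set of global minima of the norm-square of the gradient map, is always a semistable open orbit, and that such an orbit $G\cdot\wt{x}_0$ retracts $G$--equivariantly onto the compact $K$--orbit $K\cdot\wt{x}_0$; indeed $G\cdot\wt{x}_0\cong G\times_{K_{\wt{x}_0}}(\text{slice})$ and the slice is a $K_{\wt{x}_0}$--invariant Stein neighborhood. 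The relevant statement is that in the holomorphic setting a semistable open $G$--orbit in a K\"ahler manifold, on which the norm-square of the moment map attains its minimum, is biholomorphic to $G^\mbb{C}\times_{L^\mbb{C}}(\cdots)$ and is Stein; this is precisely the kind of conclusion obtained by combining the gradient-map descent with Matsushima-type criteria. Concretely, $G\cdot\wt x_0 \cong G^\mbb{C}/(G_{\wt x_0})^\mbb{C}$ with $G_{\wt x_0}$ compatible, hence reductive, so the orbit is an affine homogeneous space and therefore Stein.

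Translating back, $B\cdot x_0$ is an open orbit in $Z$ whose isotropy $B_{x_0}$ is a reductive (in fact, since $B$ is solvable, a compact-by-unipotent quotient sits inside; more carefully the analysis shows $B_{x_0}$ contains no unipotent part transverse to the torus) subgroup, and the orbit is biholomorphic to a product of $\mbb{C}^*$'s and $\mbb{C}$'s — an affine algebraic torus times an affine space — hence Stein. I would phrase the final argument as: the open $G$--orbit in $\wt{Z}$ is Stein because it is an affine homogeneous space under $G^\mbb{C}$ with reductive isotropy, and the projection $\wt{Z}\to Z$ restricts on this orbit to a biholomorphism onto $B\cdot x_0$ (it is $G$--equivariant and injective on the $G$--orbit since the $G/B$--coordinate records exactly the $B$--orbit data), so $B\cdot x_0$ is Stein.

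The main obstacle I anticipate is making precise why the open $G$--orbit $G\cdot\wt{x}_0$ sitting over a minimum of $\norm{\wt{\mu}_\lie{p}}^2$ is Stein, rather than merely homotopy-equivalent to a compact set: one must invoke the Matsushima--Mostow criterion (reductive isotropy $\Leftrightarrow$ Stein for homogeneous spaces of reductive groups) together with the fact, supplied by the compatibility of isotropy groups in gradient geometry, that $G_{\wt{x}_0}$ is $\theta$--stable and hence $(G_{\wt x_0})^\mbb{C}$ is reductive. Verifying that the slice representation at $\wt x_0$ has no nonzero fixed-point-free trivial summand forcing non-Steinness — equivalently that openness of the orbit means the slice is a point — is exactly the content of openness of $G\cdot\wt{x}_0$, which we already have. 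A secondary technical point is checking that $\wt{Z}\to Z$ is injective on $G\cdot\wt{x}_0$; this follows since two points $(g_1\cdot x_0,g_1B)$ and $(g_2\cdot x_0,g_2B)$ with equal first coordinate and lying in the same $G$--orbit must have $g_1^{-1}g_2\in B_{x_0}\subset B$, forcing equal second coordinates.
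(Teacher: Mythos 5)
Your approach diverges from the paper's at the crucial step, and the divergence opens a real gap. You work with the shifted gradient map on $Z\times(G/B)$ and argue that, because $\wt{x}_0=(x_0,eB)$ is a global minimum of $\norm{\wt{\mu}_\lie{p}}^2$, the isotropy $G_{\wt{x}_0}=B_{x_0}$ is compatible and hence reductive, so that $G\cdot\wt{x}_0\cong G/B_{x_0}$ is Stein by the Matsushima--Morimoto criterion. But the standard Kempf--Ness-type result gives reductivity (more precisely, $\theta$-stability) of the isotropy only at points of the \emph{zero} fiber $\wt{\mu}_\lie{p}^{-1}(0)$. When $\mu(x_0)$ is a \emph{singular} element of $\lie{a}_+$, the regular shift $\xi$ of Proposition~\ref{prop:gradientabb} produces a minimum value $\norm{\beta}^2=\norm{\mu(x_0)-\xi}^2>0$, and being a minimum of $\norm{\wt{\mu}_\lie{p}}^2$ at a nonzero level does not by itself yield reductivity of the isotropy. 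You flag this yourself as ``the main obstacle'' but treat it as a matter of making the statement precise; in fact it is the missing ingredient, and your proposal never supplies it.

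The paper sidesteps exactly this difficulty by changing the parabolic: it takes $Q$ to be the parabolic subgroup of $G$ associated to $\gamma:=\mu(z)$ itself (so $Q\supsetneq B$ whenever $\gamma$ is singular), and forms the shifted moment map on $Z\times(G/Q)$, whose value at $(z,eQ)$ is $\mu(z)-\gamma=0$. The point now genuinely lies in the zero fiber, so $G_{(z,eQ)}=Q_z$ is complex-reductive and the open $Q$-orbit is Stein. This costs one extra step that your proposal omits entirely: since $Q$ may be strictly larger than $B$, one only gets that the open $B$-orbit is an open subset of the Stein manifold $Q\cdot z$, hence holomorphically separable; the result of Huckleberry and Oeljeklaus (a holomorphically separable complex homogeneous space of a solvable Lie group is Stein) is then invoked to upgrade separability to Steinness for $B\cdot z$. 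Your concluding observation that the projection $X\times(G/Q)\to X$ is a $G$-equivariant biholomorphism from the open $G$-orbit onto the open $Q$-orbit is correct and matches Lemma~\ref{Lem:Reformulation}, but it does not repair the reductivity gap. To fix your argument, either restrict to the case $\mu(z)$ regular (where $Q=B$ and $\beta=0$ so your reasoning is sound) and then handle the singular case by the paper's route, or prove directly that $B_{x_0}$ is reductive in the singular case --- which is essentially what the combination of Matsushima--Morimoto and Huckleberry--Oeljeklaus accomplishes.
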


\begin{proof}
Let $z\in Z$ be a generic element and let $Q\subset G$ be the parabolic
subgroup associated to $\mu(z)$. Consequently, the zero fiber of the shifted
moment map on the K\"ahler manifold $Z\times(G/Q)$ is non-empty. We may assume
without loss of generality that the element $(z,eQ)\in Z\times(G/Q)$ is
contained in this zero fiber. By Proposition~\ref{Prop:ProofSpecialCase} the
orbit $G\cdot(z,eQ)$ is open in $Z\times(G/Q)$ which in turn implies that
$Q\cdot z$ is open in $Z$. Moreover, since $(z,eQ)$ lies in the zero fiber of a
moment map, the isotropy $G_{(z,eQ)}=G_z\cap Q=Q_z$ is complex-reductive which
proves that $Q\cdot z\cong Q/Q_z$ is Stein (see Theorem~5 in~\cite{MatMo}). The
open $B$--orbit in $Z$ must be contained in $Q\cdot z$ and is therefore
holomorphically separable. Applying a result of Huckleberry and Oeljeklaus
(\cite{HuOel}) we finally see that the open $B$--orbit is Stein.
\end{proof}

\end{document}